\numberwithin{equation}{section}
\DeclareSymbolFontAlphabet{\mathbb}{AMSb} %to ensure that the meaning of \mathbb does not change
\DeclareSymbolFontAlphabet{\mathbbl}{bbold}
\theoremstyle{plain}
\newtheorem{Th}{Theorem}[section]
\newtheorem{Lemma}[Th]{Lemma}
\newtheorem{Cor}[Th]{Corollary}
\newtheorem{Prop}[Th]{Proposition}
\theoremstyle{definition}
\newtheorem{Def}[Th]{Definition}
\newtheorem{Con}[Th]{Construction}
\newtheorem{Rem}[Th]{Remark}
\newtheorem{?}[Th]{Problem}
\newtheorem{Ex}[Th]{Example}
\newcommand{\Hom}{{\rm{Hom}}}
\newcommand{\RHom}{{\rm{RHom}}}
\newcommand{\Spf}{{\rm{Spf}}}
\newcommand{\Ext}{{\rm{Ext}}}
\newcommand{\Spec}{{\rm{Spec}}}
\newcommand{\colim}{\mathop{\mathrm{colim}}}
\newcommand{\tn}[1]{\textnormal{#1}}
\newcommand{\tbf}[1]{\textbf{#1}}
\newcommand{\Q}{\mathbb{Q}}
\newcommand{\Z}{\mathbb{Z}}
\newcommand{\DM}{\tn{\tbf{DM}}}
\newcommand{\et}{\tn{\'{e}t}}
\newcommand{\nis}{\tn{Nis}}
\newcommand{\DMe}{\DM^{\tn{eff}, -}}
\newcommand{\eff}{\tn{eff}}
\newcommand{\DMeet}{\DMe_{\et}}
\newcommand{\DMeN}{\DMe_{\nis}}
\begin{document}

\setcounter{page}{1}

\title{On the weight zero motivic cohomology}
\author{Semen Molokov}
\address{Department of Mathematics, HSE University, Ulitsa Usacheva 6, Moscow 119048, Russia}
\email{sam-molokov1@yandex.ru, sam\_molokov@yahoo.com}
\author{Vadim Vologodsky}
\address{Department of Mathematics, University of Chicago,
5734 S. University Av., Chicago, IL, 60637,
USA; Department of Mathematics, HSE University, Ulitsa Usacheva 6, Moscow 119048, Russia
}
\email{vologod@gmail.com}
\maketitle

%\enddoc@text
\setcounter{tocdepth}{1}

\begin{center}

\begin{minipage}[t]{0.8\linewidth}

Abstract. We prove that singular cohomology of the underlying space of Berkovich's analytification of a scheme $X$ locally of finite type over a trivially-valued field $k$ of characteristic $0$ is isomorphic to cdh-cohomology with integer coefficients which is also isomorphic to the weight zero motivic cohomology $H^*(X, \Z)$.  Using this isomorphism, we demonstrate  the vanishing of $\RHom_{Sh_{Nis}(cor_k)}(\underline{G},\Z)$, where $\underline{G}$ denotes the Nisnevich sheaf with transfers associated with a commutative algebraic group $G$ over $k$. For abelian $k$-varieties $A$ and $B$, we prove that $\RHom_{\mathcal{PS}h_{tr}}(\underline{A},\underline{B})$ is isomorphic to $\Hom_{\mathbf{Ab_k}}(A,B)$. 
\end{minipage}
\end{center}

\tableofcontents
\section{Introduction}\label{Sec1}
There is a long history of comparison results between cohomology in algebraic and analytic geometry. Let us begin with two fundamental examples.

Let $X$ be a projective variety over a field $k$. When $k=\mathbb{C}$, the set of complex-valued points $X(\mathbb{C})$ of $X$ can be given the structure of a complex analytic space, denoted by $X^{an}$. For any sheaf $\mathcal{F}$ on $X$, one can associate a sheaf $\mathcal{F}^{an}$ on the analytification $X^{an}$. Serre's GAGA theorem says that if $\mathcal{F}$ is a coherent sheaf on $X$, then the functorial homomorphisms $\epsilon_q\colon H^q(X, \mathcal{F})\to H^q(X^{an}, \mathcal{F}^{an})$ are isomorphisms for all $q$.

Another fundamental connection was established by Grothendieck. Let $X$ be a smooth algebraic variety over $k$. Algebraic de Rham cohomology $H^*_{dR}(X/k)$ are defined as hypercohomology of the de Rham complex
\[
\xymatrix{
\Omega^0_{X/k}\ar[r]^d & \Omega^1_{X/k}\ar[r]^d & \Omega^2_{X/k}\ar[r]^d & \ldots
}.
\]
When $k=\mathbb{C}$, Grothendieck showed that the analytification comparison map induces an isomorphism $H^*_{dR}(X/k)\simeq H^*_{dR}(X^{an})$ between algebraic and analytic de Rham cohomology. Moreover, by de Rham's theorem, the latter one is isomorphic to singular cohomology $H^*_{sing}(|X^{an}|, \mathbb{C})$. 

When $k=\Q_p$ or $\mathbb{C}_p$, the induced topology on $k$ is totally disconnected, which presents challenges in developing a suitable framework for $k$-analytic geometry.  One prominent  approach is due to Berkovich, who introduced $k$-analytic spaces (now known as Berkovich spaces). These spaces have desirable topological properties and can be defined even over trivially-valued fields. Furthermore, there is an analytification functor $X\mapsto X^{an}$ from schemes locally of finite type over $k$ to Berkovich spaces that preserves many properties of morphisms between schemes. Another sign that Berkovich spaces are useful is the existence of \'etale cohomology theory constructed in \cite{BerII}.  Throughout this paper, we usually endow the based field $k$ with a trivial norm.

The theory of motives provides a universal framework for unifying various similarly behaved cohomology theories of varieties over a field $k$. Working with Voevodsky motives requires the notion of the Nisnevich topology, which retains useful properties of both Zariski and \'etale topologies. Then, for any smooth $k$-variety $X$, one can associate a Nisnevich sheaf with transfers $\Z_{tr}[X]$ and a motive $M(X)\in \DMeN(k, \Z)$. To study singular varieties, one must extend covers in the Nisnevich topology by incorporating some form of resolution of singularities. For instance, the cdh-topology allows proper birational maps as coverings. Our first result gives a connection between cdh-cohomology $H^*_{cdh}(X, \Z)$ of a connected scheme $X$ locally of finite type over $k$ and usual singular cohomology $H^*(|X^{an}|, \Z)$ of the underlying space of the analytification:
\begin{Th}\label{Int1}
Let $X$ be a connected scheme locally of finite type over a field $k$ which is trivially-valued and allows resolution of singularities. Then there is a canonical quasi-isomorphism
\[
    C^\bullet(|X^{an}|, \Z)\to R\Gamma_{cdh}(X, \Z),
\]
where $C^\bullet(|X^{an}|, \Z)$ is the usual singular cochain complex computing singular cohomology of $|X^{an}|$ with $\Z$ coefficients.
\end{Th}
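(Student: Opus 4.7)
The plan is to establish cdh-descent for both sides and reduce to the case of smooth proper $k$-varieties, where both functors compute $\Z^{\pi_0(X)}$.

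First, I would handle the smooth proper case. For $X$ smooth proper connected over a trivially-valued field $k$ of characteristic $0$, a theorem of Berkovich, made functorial by Thuillier via resolution of singularities, provides a deformation retraction of $|X^{an}|$ onto the skeleton of any strict normal crossings model, which over a trivially-valued field is a single point. Hence $|X^{an}|$ is contractible and $C^\bullet(|X^{an}|, \Z) \simeq \Z$. On the cdh side, for smooth $X$, cdh-cohomology of the constant sheaf $\Z$ agrees with Zariski cohomology, so $R\Gamma_{cdh}(X, \Z) \simeq \Z$ for connected $X$, and both sides compute $\Z^{\pi_0(X)}$ in general.

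The second step, and the main obstacle, is to prove cdh-descent for the presheaf $X \mapsto C^\bullet(|X^{an}|, \Z)$. Since cdh-covers are generated by Nisnevich covers and abstract blow-up squares, it suffices to treat each case. For an abstract blow-up square $(E\to Y,\, Z\to X)$ with $Y \to X$ proper and $Y \setminus E \xrightarrow{\sim} X \setminus Z$, the key geometric input is that Berkovich analytification sends proper $k$-scheme morphisms to topologically proper maps and closed immersions to closed embeddings. One then shows $Y^{an} \to X^{an}$ is a quotient map that identifies the preimage of $Z^{an}$ with $E^{an}$, so the square of topological spaces is a homotopy pushout, yielding the Mayer--Vietoris distinguished triangle of singular cochain complexes. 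For Nisnevich squares, \'etale morphisms analytify to local homeomorphisms of Berkovich spaces and classical excision applies.

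The third step uses resolution of singularities and Voevodsky's theorem on cdh-hyperenvelopes to choose a simplicial resolution $U_\bullet \to X$ by smooth proper varieties. Since both functors satisfy cdh-hyperdescent, one obtains
\[
C^\bullet(|X^{an}|, \Z) \simeq \mathop{\mathrm{holim}}_{[n]} C^\bullet(|U_n^{an}|, \Z), \qquad R\Gamma_{cdh}(X, \Z) \simeq \mathop{\mathrm{holim}}_{[n]} R\Gamma_{cdh}(U_n, \Z).
\]
The first step provides a canonical termwise identification $C^\bullet(|U_n^{an}|, \Z) \simeq \Z^{\pi_0(U_n)} \simeq R\Gamma_{cdh}(U_n, \Z)$, which assembles into the canonical quasi-isomorphism of the theorem.

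The principal difficulty is verifying cdh-descent on the Berkovich side, since this requires showing that abstract blow-up squares become homotopy pushouts of the underlying Berkovich topological spaces. This depends on careful topological analysis of the analytification functor (properness, surjectivity, and quotient behavior) and relies on Berkovich's and Thuillier's foundational work over trivially-valued fields. A secondary issue is ensuring naturality of the $\pi_0$ identifications across the simplicial hyperenvelope, which is essentially functoriality.
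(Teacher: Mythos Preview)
Your proposal is correct and follows essentially the same strategy as the paper: establish the Mayer--Vietoris property for $X \mapsto C^\bullet(|X^{an}|,\Z)$ with respect to both Nisnevich and abstract blow-up squares, then use resolution of singularities to reduce to smooth varieties where both sides compute $\Z^{\pi_0(X)}$ via Berkovich--Thuillier contractibility.

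The only difference is in packaging the reduction step. The paper invokes Voevodsky's theorem that for the (complete, regular, bounded) combined cd-structure, the MV-property is equivalent to being a hypersheaf; this gives directly $\mathcal{F}^\bullet(X)\simeq R\Gamma_{cdh}(X,\mathcal{F}^\bullet_{cdh})$, and one then checks that $\mathcal{F}^\bullet_{cdh}\simeq \Z$ since cdh-sheaves are determined on $Sm_k$. You instead pass through an explicit simplicial hyperresolution $U_\bullet\to X$ by smooth proper varieties and compare holims termwise. These are equivalent, but note that your route tacitly uses the same Voevodsky result: descent for distinguished squares alone does not immediately give hyperdescent, so you still need the cd-structure machinery to justify the holim identification on the Berkovich side. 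Also, properness of the $U_n$ is unnecessary: the paper observes that $|X^{an}|$ is already contractible for any smooth separated $X$ (by compactifying via Hironaka), so smooth hyperresolutions suffice. Finally, your treatment of the abstract blow-up square as a homotopy pushout is correct and equivalent to the paper's use of the Mayer--Vietoris sequence for a closed covering; for the Nisnevich square the paper argues via the open covering $|X^{an}| = |U^{an}|\cup p^{an}(|V^{an}|)$ rather than excision, but the content is the same.
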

We give the proof of this theorem in section~\ref{Sec4} (Theorem~\ref{Comparison}). The proof is based on Voevodsky's results concerning presheaves on topologies generated by cd-structures and several results on the homotopy type of $|X^{an}|$ due to Berkovich and Thuillier. 

Let  $cor_k$ be the category whose objects are smooth $k$-varieties and whose morphisms are finite correspondences. The category of presheaves with transfers, denoted by $PSh(cor_k)$, consists of presheaves $F\colon cor_k^{op}\to \mathbf{Ab}$.  We denote the category of Nisnevich sheaves with transfers by $Sh_{Nis}(cor_k)$. For any commutative algebraic group $G$ over $k$, the presheaf $X\mapsto Mor_{Sch_k}(X, G)$ admits a natural transfer structure. We denote this presheaf by $\underline{G}$. When $k$ is a perfect field, Spie{\ss} and Szamuely proved that $\underline{G}$ is a Nisnevich sheaf with transfers, see \cite[Lemma 3.2]{SS}. Furthermore, if $G$ is a semi-abelian variety, Orgogozo's results \cite[Lemma 3.3.1]{Org} show that the sheaf $\underline{G}$ is also $\mathbb{A}^1$-homotopy invariant. We denote by $M_1(G)$ the image in $\DMeN(k, \Z)$ of the complex given by $\underline{G}$ concentrated in degree $0$.

Our next results reveal several new properties of $\underline{G}$ and $M_1(G)$. First, we prove the following theorem.
\begin{Th}
Let $G$ be a semi-abelian variety over $k$ of characteristic $0$. Then one has that 
\[
    \RHom_{\DMeN(k,\Z)}(M_1(G), \Z)\simeq 0.
\]
Moreover, if $G$ is only a commutative algebraic group over a field $k$ of characteristic~$0$ then $\RHom_{Sh_{Nis}(cor_k)}(\underline{G}, \Z)\simeq 0$.
\end{Th}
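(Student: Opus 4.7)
My plan is to first reduce the $\DMeN$-statement to the Nisnevich sheaf-level statement, and then establish the latter by combining Theorem~\ref{Int1} with a carefully chosen resolution of $\underline{G}$.

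\emph{Reduction.} If $G$ is semi-abelian in characteristic zero, then $\underline{G}$ is $\mathbb{A}^1$-homotopy invariant by Orgogozo, and the constant sheaf $\Z$ is likewise homotopy invariant. Over the perfect field $k$, Voevodsky's theorem provides a fully faithful embedding of the derived category of $\mathbb{A}^1$-invariant Nisnevich sheaves with transfers into $\DMeN(k,\Z)$, giving
\[
\RHom_{\DMeN(k,\Z)}(M_1(G), \Z) \simeq \RHom_{Sh_{Nis}(cor_k)}(\underline{G}, \Z).
\]
Hence the first assertion follows from the second applied to semi-abelian $G$.

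\emph{Main step.} The basic tool is the canonical summation morphism $\pi\colon \Z_{tr}[G]\to\underline{G}$ of Nisnevich sheaves with transfers: a finite correspondence $\sum n_i[Z_i]$, with $Z_i\subset X\times G$ integral and finite surjective onto a component of $X$, is sent to the morphism $X\to G$ obtained by summing the geometric fibers of the $Z_i$ in $G$ via its group law (well-defined by commutativity). This is a Nisnevich surjection, and it annihilates the canonical split inclusion $\Z=\Z_{tr}[\Spec k]\hookrightarrow\Z_{tr}[G]$ induced by $e\colon\Spec k\to G$, because the constant morphism at $e$ is the neutral element of the abelian group $\underline{G}(X)$. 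Thus $\pi$ factors as a surjection $\widetilde\pi\colon \widetilde{\Z_{tr}[G]}:=\Z_{tr}[G]/\Z\twoheadrightarrow\underline{G}$. By Thuillier's retraction to the Gauss point, the Berkovich analytification $|G^{n,an}|$ of the smooth scheme $G^n$ is contractible for every $n\geq 1$; combined with Theorem~\ref{Int1} and the agreement of Nisnevich and cdh cohomology for the constant sheaf $\Z$ on smooth varieties, this yields
\[
\RHom_{Sh_{Nis}(cor_k)}(\Z_{tr}[G^n],\Z)\simeq\Z[0],
\]
and in particular $\RHom(\widetilde{\Z_{tr}[G^n]},\Z)\simeq 0$ for every $n\geq 1$. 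To conclude, I would construct a bar-type simplicial resolution of $\underline{G}$ in $Sh_{Nis}(cor_k)$ whose terms are built from the reduced representables $\widetilde{\Z_{tr}[G^n]}$, using the nerve of $G$ as a commutative group object; applying $\RHom(-,\Z)$ to this resolution then gives an acyclic total complex, yielding the desired vanishing.

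\emph{Main obstacle.} The hardest step is precisely the construction of this bar-type resolution of $\underline{G}$ in $Sh_{Nis}(cor_k)$ and the verification that the resulting $\RHom$-complex is acyclic. The summation map $\pi$ is not itself the differential of a standard bar complex, so one must work with a cleverly chosen simplicial model (e.g., the nerve of $G$ with its face maps coming from projections and multiplication), identify a preimage of $\underline{G}$ at the level of the associated normalized chain complex, and control the higher contributions uniformly via the contractibility of $|G^{n,an}|$ for all $n$ together with a convergence/degeneration argument for the associated spectral sequence.
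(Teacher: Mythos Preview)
Your reduction step and the acyclicity computation $\RHom_{Sh_{Nis}(cor_k)}(\widetilde{\Z_{tr}[G^n]},\Z)\simeq 0$ are correct and align with the paper's overall strategy. The genuine gap is the resolution itself. The nerve $N_\bullet G$ with $N_nG=G^n$ does \emph{not} resolve $\underline{G}$: since $N_0G=\Spec k$, the chain complex associated to $\Z_{tr}[N_\bullet G]$ is augmented over $\Z$ rather than over $\underline{G}$, and its higher homology presheaves do not vanish. More generally, there is no simple bar-type complex in the $\Z_{tr}[G^n]$ whose homology is $\underline{G}$ concentrated in degree zero; producing a functorial resolution of an abelian sheaf by free abelian groups on its cartesian powers essentially forces one into a Breen--Deligne-type construction, whose terms are direct sums of several $\Z_{tr}[G^{n_i}]$ with combinatorially intricate differentials, and which you have not supplied. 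Your ``main obstacle'' paragraph already senses this difficulty but does not overcome it.

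The paper circumvents the problem by replacing cartesian powers with \emph{symmetric} powers. The reduced symmetric power $S^\bullet_{red}$ is left adjoint to the forgetful functor from pointed commutative monoid presheaves to pointed presheaves, so the cotriple $\perp=S^\bullet_{red}\circ U$ applied to $G$ produces an augmented simplicial object $(S^\bullet_{red})^{\circ(\ast+1)}(G)\to\underline{G}$ which is \emph{automatically} contractible after forgetting the monoid structure, by the standard comonadic bar construction. Quillen's group-completion theorem then shows that the levelwise group completion still resolves $\underline{G}$, and the Suslin--Voevodsky identification $Mor_{Sch}(U,S^\bullet X)^+\simeq cor(U,X)$ recognizes each term as a filtered colimit of reduced representables $\Z_{tr}[(S^t)^{\circ(n-1)}G/e]$. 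Their acyclicity against $\Z$ again comes from Theorem~\ref{Int1}, but now one needs contractibility of the analytification of a quotient of a smooth variety by a \emph{finite group}; this requires the equivariance of Thuillier's deformation retraction, which the paper checks separately. The symmetric-power idea is thus the missing ingredient: it makes the resolution exist for free, at the price of a slightly stronger contractibility input.
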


To prove this result, we develop some simplicial machinery using symmetric powers of algebraic varieties. For any commutative algebraic group $G$ over $k$, this machinery produces a special augmented simplicial commutative monoidal scheme over $k$ of the following form:
\[
    \xymatrix{
    \ldots \ar@<1ex>[r]\ar@<0ex>[r]\ar@<-1ex>[r] &S^\bullet(S^\bullet(G))\ar@<-.5ex>[r] \ar@<.5ex>[r] & S^\bullet(G)\ar[r] & G,
    }
\]
where $S^\bullet(G)$ denotes the infinite symmetric power of $G$.
We then show that this augmented simplicial commutative monoidal scheme is contractible when viewed as a simplicial scheme over $k$. By applying Quillen's results on homotopy theoretic group completion, we obtain the following proposition:
\begin{Prop}\label{int_acyclic}
Let $G$ be a commutative algebraic group over $k$. Then the above augmented simplicial $k$-scheme produces a resolution of $\underline{G}$ in $PSh(cor_k)$  (and in $Sh_{Nis}(cor_k)$) of the following form:
\[
    \xymatrix{
    \ldots\ar[r] &\Z_{tr}[S^\bullet(S^\bullet(G))]\ar[r] &\Z_{tr}[S^\bullet(G)]\ar[r]&\Z_{tr}[G]\ar[r]&\underline{G}.
    }
\]
\end{Prop}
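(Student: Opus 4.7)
The plan is to combine the scheme-level contractibility of the augmented simplicial object (established earlier in the paper) with Quillen's homotopy-theoretic group completion theorem, adapted to the abelian category of presheaves with transfers.

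First, I apply the functor $\Z_{tr}[-] \colon Sch_k \to PSh(cor_k)$ termwise to the augmented simplicial commutative monoidal scheme. Since each face, degeneracy, and augmentation morphism respects the commutative monoidal structure, the resulting object is a simplicial commutative monoid in $PSh(cor_k)$. Extending by the canonical surjection $\Z_{tr}[G] \twoheadrightarrow \underline{G}$ -- which sends a finite correspondence $Z \subset U \times G$ to the morphism $u \mapsto \sum_{g \in Z_u} \mathrm{mult}(u,g) \cdot g$ computed via the group law of $G$ -- produces the chain complex displayed in the proposition.

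Second, I use scheme-level contractibility, realized by an explicit extra degeneracy coming from the unit $\mathrm{id} \to S^\bullet$ of the free-commutative-monoid monad applied to $G$. Since this extra degeneracy is a morphism of schemes, it lifts through the covariant functor $\Z_{tr}[-]$ and furnishes a simplicial chain contraction of the simplicial abelian presheaf $\Z_{tr}[S^\bullet(\cdots(G))]_\bullet$, showing that the higher homology of $\ldots \to \Z_{tr}[S^\bullet(S^\bullet(G))] \to \Z_{tr}[S^\bullet(G)]$ vanishes in $PSh(cor_k)$. To identify the cokernel at the augmentation with $\underline{G}$, I invoke Quillen's group completion theorem: each term carries a commutative monoid structure in $PSh(cor_k)$ inherited from the symmetric-product structure on $S^\bullet$, and the bar complex of the simplicial commutative monoid $\Z_{tr}[S^\bullet(G)]_\bullet$ computes its homotopy-theoretic group completion. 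On Nisnevich stalks this group completion is $\underline{G}$, because $\underline{S^\bullet(G)}$ is essentially the free commutative monoid $\mathbb{N}[\underline{G}]$, whose group completion modulo the group-law relations of $G$ is precisely $\underline{G}$. Passage from $PSh(cor_k)$ to $Sh_{Nis}(cor_k)$ is then immediate by the exactness of Nisnevich sheafification.

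The principal obstacle is carrying out the group-completion step precisely in the abelian category $PSh(cor_k)$. Quillen's theorem is classically stated for topological monoids, so one must verify that the appropriate analog holds: that the group completion of $\Z_{tr}[S^\bullet(G)]$ is $\underline{G}$, and that the bar-complex differential -- which combines simplicial face maps with the monoidal structure -- matches the one induced by the given augmented simplicial scheme. Handling this interplay between the abelian structure on $\Z_{tr}[-]$ and the monoidal structure coming from $S^\bullet$ is the technical heart of the argument.
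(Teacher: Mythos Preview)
Your argument has a genuine gap at the index-shift that distinguishes $\Z_{tr}[G]$ from $\underline{G}$. When you apply $\Z_{tr}[-]$ termwise to the augmented simplicial scheme and lift the extra degeneracy, you correctly obtain a contractible augmented simplicial object in $PSh(cor_k)$. But its augmentation is $\Z_{tr}[S^\bullet(G)]\to\Z_{tr}[G]$, so contractibility says precisely that
\[
\ldots \to \Z_{tr}[S^\bullet(S^\bullet(G))]\to \Z_{tr}[S^\bullet(G)]\to \Z_{tr}[G]\to 0
\]
is exact. In particular the map $\Z_{tr}[S^\bullet(G)]\to \Z_{tr}[G]$ is \emph{surjective}. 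If you now ``extend by the canonical surjection $\Z_{tr}[G]\twoheadrightarrow\underline{G}$'', the resulting complex fails to be exact at $\Z_{tr}[G]$: the image of the incoming differential is all of $\Z_{tr}[G]$, while the kernel of $\Z_{tr}[G]\to\underline{G}$ is a proper nontrivial subgroup. Your appeal to Quillen's group completion does not repair this, because by that stage everything is already a group object in $PSh(cor_k)$ and group completion is the identity; the bar-complex description you sketch is not the complex in the proposition.

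The paper's mechanism is different in exactly this respect. One evaluates the simplicial \emph{scheme} at a smooth $U$ to obtain a contractible simplicial commutative monoid of sets $n\mapsto Mor_{Sch}(U,(S^\bullet)^{\circ(n+1)}(G))$ augmented over $G(U)$. Because $\pi_0$ is the group $G(U)$, this simplicial monoid is group-like, so Quillen's result applies \emph{here}: the degreewise group completion is still aspherical. Now the Suslin--Voevodsky identification $Mor_{Sch}(U,S^\bullet X)^+\simeq cor(U,X)$ converts level $n$ into $\Z_{tr}[(S^\bullet)^{\circ n}(G)](U)$, absorbing one copy of $S^\bullet$ and producing the desired shift: level $0$ becomes $\Z_{tr}[G](U)$ and the augmentation becomes $\underline{G}(U)$. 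The Dold--Kan complex of \emph{this} simplicial abelian group is the displayed resolution. In short, the group-completion step has to happen at the level of $Mor_{Sch}(U,-)$, before one lands in $\Z_{tr}$, and it is the Suslin--Voevodsky isomorphism that supplies the missing index shift your argument lacks.
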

To compute $\Ext_{Sh_{Nis}(cor_k)}^i(\underline{G}, \Z)$, we need to show that the resolution in Theorem~\ref{int_acyclic} is $\Hom_{Sh_{Nis}(cor_k)}(-, \Z)$-acyclic. This is done using Theorem~\ref{Int1}, as results by Berkovich and Thuillier (explained in section~\ref{Sec3}) show that for any smooth separated $k$-variety $X$ with an action of a finite group $G$, the analytification of the quotient $|X^{an}|/G$ is contractible.

However, since $S^\bullet(G)$ has infinitely many connected components, applying this resolution to compute $\Ext_{Sh_{Nis}(cor_k)}^i(\underline{G}, \Z)$ becomes challenging. To overcome this difficulty, we develop a similar approach using iterated reduced symmetric powers of pointed quasi-projective algebraic $k$-varieties. The
key difference with the previous approach is that the reduced symmetric power exists only as a pointed strict ind-scheme, not in the category of quasi-projective varieties. Using similar arguments, we obtain an augmented simplicial presheaf of pointed monoids on affine $k$-schemes of the form:
\[
    \xymatrix{
    \ldots \ar@<1ex>[r]\ar@<0ex>[r]\ar@<-1ex>[r] &S^\bullet_{red}(S^\bullet_{red}(G))\ar@<-.5ex>[r] \ar@<.5ex>[r] & S^\bullet_{red}(G)\ar[r] & \underline{G}
    }
\]
This augmented simplicial presheaf is contractible when considered as a simplicial presheaf of pointed sets. For a presheaf of monoids $\mathcal{F}$, we denote its group completion by $\mathcal{F}^+$. We can now state the following proposition, which provides a pointed version of Proposition~\ref{int_acyclic}:
\begin{Prop}\label{int_acyclic_pointed}
Let $G$ be a commutative algebraic group over $k$. Then the above augmented simplicial presheaf of pointed sets on affine $k$-schemes induces a resolution of $\underline{G}$ in $PSh(cor_k)$ (and in $Sh_{Nis}(cor_k)$) of the following form:
\[
    \xymatrix{
    \ldots\ar[r] & (S^\bullet_{red})^{\circ 3}(G)^+\ar[r] &(S^\bullet_{red})^{\circ 2}(G)^+\ar[r]&S^\bullet_{red}(G)^+\ar[r]&\underline{G}.
    }
\]
Moreover, this resolution is $\Hom_{Sh_{Nis}(cor_k)}(-, \Z)$-acyclic.
\end{Prop}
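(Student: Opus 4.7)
The plan follows the pattern of Proposition~\ref{int_acyclic}, adapted to the pointed/reduced setting. The proof splits into two parts: first, establishing that the group-completed augmented simplicial presheaf is a resolution of $\underline{G}$ in $Sh_{Nis}(cor_k)$; second, verifying the $\Hom_{Sh_{Nis}(cor_k)}(-, \Z)$-acyclicity of each term.

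For the first part, the paragraph preceding the proposition already asserts that the augmented simplicial presheaf of pointed sets $\cdots \to (S^\bullet_{red})^{\circ 2}(G) \to S^\bullet_{red}(G) \to \underline{G}$ is contractible. I would regard this as the monadic bar resolution attached to the free--forgetful adjunction between pointed sets and pointed commutative monoids, where the contracting homotopy is supplied by the unit of the monoid structure on $\underline{G}$. The monoid group completion $(-)^+$ is left adjoint to the forgetful functor from abelian groups to pointed commutative monoids, so applied levelwise it sends a contractible augmented simplicial pointed monoid to a contractible augmented simplicial abelian group. Passing to the associated Moore complex then yields the claimed exact sequence in $PSh(cor_k)$. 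It remains to endow each $(S^\bullet_{red})^{\circ n}(G)^+$ with the structure of a Nisnevich sheaf with transfers: this uses the natural transfer structure on $\Z_{tr}[-]$ of smooth quasi-projective varieties, its descent under the symmetric group quotients defining the $S^n_{red}$, and the preservation of both the transfer structure and the Nisnevich sheaf condition by filtered colimits, so that the ind-scheme colimits together with the final monoid completion inherit these structures.

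For the second part, I would apply $\RHom_{Sh_{Nis}(cor_k)}(-, \Z)$ term by term. For a smooth variety $X$ the standard identification gives
\[
\RHom_{Sh_{Nis}(cor_k)}(\Z_{tr}[X], \Z) \simeq R\Gamma_{Nis}(X, \Z) \simeq R\Gamma_{cdh}(X, \Z),
\]
and by Theorem~\ref{Int1} the last expression is quasi-isomorphic to $C^\bullet(|X^{an}|, \Z)$. For each finite piece of an iterated reduced symmetric power of $G$, the Berkovich and Thuillier contractibility results recalled in section~\ref{Sec3} imply that the analytification is contractible, so the singular cochain complex reduces to $\Z$ in degree zero and vanishes in positive degrees. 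Reducing the computation for $(S^\bullet_{red})^{\circ n}(G)^+$ to the corresponding computations on the finite smooth strata of its defining ind-scheme, and using that group completion is a left adjoint, should then yield the required vanishing of $\Ext^i$ for $i>0$.

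The main obstacle, in my view, lies in the bookkeeping around the ind-scheme structure and the group completion. Unlike in Proposition~\ref{int_acyclic}, where $S^\bullet(G)$ is an infinite disjoint union of smooth varieties to which the transfer, Nisnevich, and cdh formalism applies directly, here $(S^\bullet_{red})^{\circ n}(G)^+$ is obtained from iterated reduced symmetric powers by an ind-colimit followed by monoid group completion, and one must verify that $\Hom_{Sh_{Nis}(cor_k)}(-, \Z)$ is compatible with both operations so that the vanishing on each finite smooth stratum propagates to the whole object. Once this compatibility is in place, the contractibility of the analytifications of the iterated symmetric quotients, combined with Theorem~\ref{Int1}, delivers the claimed $\Hom(-, \Z)$-acyclicity.
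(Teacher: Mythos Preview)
Your overall plan mirrors the paper's, but there is a genuine gap in the first part. You argue that since $(-)^+$ is a left adjoint, it sends the contractible augmented simplicial pointed monoid to a contractible augmented simplicial abelian group. This does not work: contractibility here means the existence of extra degeneracies, and those extra degeneracies (the unit maps $X \hookrightarrow S^\bullet_{red}(X)$ of the monad) are maps of \emph{pointed sets}, not of monoids. The functor $(-)^+$ is defined on monoids, so it does not see these maps, and being a left adjoint does not help---left adjoints preserve colimits, not contracting homotopies living in a larger category. The paper closes this gap via Quillen's homotopy-theoretic group completion (Proposition~\ref{Quillen}): the contractible augmented simplicial pointed set is in particular aspherical, so $\pi_0$ of the unaugmented part is the group $G(U)$; the simplicial monoid is therefore group-like, and Quillen's result gives that $M \to M^+$ is a weak equivalence. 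Only then does one pass to the associated chain complex via Dold--Kan.

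For the acyclicity part your outline is in the right direction but also needs correction. The finite pieces $(S^t)^{\circ(n-1)}(G)$ are quotients of smooth varieties by finite groups and are \emph{not} smooth, so your displayed identification $R\Gamma_{Nis} \simeq R\Gamma_{cdh}$ for smooth $X$ does not apply to them. The paper instead invokes \cite[Theorem~14.20]{MVW}, which gives $\Ext^i_{Sh_{Nis}(cor_k)}(\Z_{tr}[Y],\Z)\simeq H^i_{cdh}(Y,\Z)$ for arbitrary $Y$ under resolution of singularities; combined with Corollary~\ref{quotient} and Theorem~\ref{Comparison} this yields the vanishing. The passage from the ind-object to the finite pieces is made precise by writing $(S^\bullet_{red})^{\circ n}(G)^+ \simeq \colim_t \Z_{tr}[(S^t)^{\circ(n-1)}(G)/e]$ and turning $\RHom$ of the colimit into $R\lim$ of the term-wise $\RHom$'s, each of which is then shown to vanish; your phrase ``should then yield'' is exactly where this explicit bookkeeping is required.
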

This resolution serves as our main computational tool for working with $\underline{G}$ in the category of presheaves with transfers (Nisnevich sheaves with transfers). 

Several important results are already known for $\underline{G}_\Q\simeq \underline{G}\otimes_\Z\Q$ and its associated motive $M_1(G)_\Q\in \DMeet(k, \Q)$. In particular, the following theorem \cite[Theorem~3.1.4]{AEWH} establishes a connection between the motive $M(G)\in \DMeet(k, \Q)$ and symmetric powers of motives $M_1(G)_\Q$:
\begin{Th}[Ancona, Enright-Ward, Huber]
Let $G$ be a semi-abelian variety over a perfect field $k$ which is an extension of an abelian variety of dimension $g$ by a torus of rank $r$. Then the motive $M_1(G)_\Q$ is odd of dimension $2g+r$ and the map
\[
    \phi_G\colon M(G)\to \bigoplus_{n=0}^{2g+r}S^n(M_1(G)_\Q)
\]
is an isomorphism of motives.
\end{Th}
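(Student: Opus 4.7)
The plan is to reduce the theorem to the separate cases of abelian varieties and tori, and then combine them via the semi-abelian extension structure.

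First, I would verify that $M_1(G)_\Q$ is Kimura-odd of dimension $2g+r$. For an abelian variety $A$, this is essentially classical (Shermenev, Kimura--O'Sullivan), closely tied to the Deninger--Murre Chow--K\"unneth decomposition in which $M_1(A)_\Q$ plays the role of $h^1(A)$. For a torus $T$ of rank $r$, one directly computes $M_1(T)_\Q \simeq \Q(1)[1]^{\oplus r}$, which is manifestly odd of dimension $r$. For a general semi-abelian $G$ sitting in $0 \to T \to G \to A \to 0$, the short exact sequence of Nisnevich sheaves with transfers yields a distinguished triangle $M_1(T)_\Q \to M_1(G)_\Q \to M_1(A)_\Q$; since Kimura-odd objects are closed under extensions in a $\Q$-linear tensor category and Kimura dimensions add, $M_1(G)_\Q$ is odd of dimension $2g+r$.

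Second, I would construct the map $\phi_G$ functorially from the commutative group structure on $G$. The multiplication $m\colon G\times G \to G$ makes $M(G)$ a commutative algebra object in $\DMeet(k,\Q)$. Since $M_1(G)_\Q$ is odd of dimension $2g+r$, the free graded-commutative algebra it generates is precisely the finite sum $\bigoplus_{n=0}^{2g+r} S^n(M_1(G)_\Q)$. The canonical inclusion $M_1(G)_\Q \hookrightarrow M(G)$, obtained by projecting to the primitive part relative to the diagonal coproduct, then extends uniquely to an algebra homomorphism $\phi_G$.

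Third, I would prove that $\phi_G$ is an isomorphism case by case. For abelian varieties, the Deninger--Murre decomposition gives $M(A) \simeq \bigoplus_i \Lambda^i\,M_1(A)_\Q$, and oddness identifies exterior powers with symmetric ones. For tori, the K\"unneth formula $M(\mathbb{G}_m^r) \simeq M(\mathbb{G}_m)^{\otimes r}$ combined with the split decomposition $M(\mathbb{G}_m) \simeq \Q \oplus \Q(1)[1]$ produces the symmetric algebra decomposition directly. For the general semi-abelian case, I would analyze the motivic Leray spectral sequence of the Zariski-locally trivial $T$-torsor $G \to A$: over $\Q$-coefficients one expects the spectral sequence to degenerate by weight considerations, reducing to a K\"unneth-type product of the two established cases.

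The main obstacle is the last step: controlling the extension $0\to T\to G \to A\to 0$ well enough to propagate the isomorphism through all symmetric powers. A cleaner route might bypass the spectral sequence entirely: by the Kimura finiteness of step one, both sides of $\phi_G$ are Kimura-finite with the same total ``size'', so it suffices to prove surjectivity (or injectivity). Either could then be checked after applying a conservative realization functor (Betti or $\ell$-adic) restricted to Kimura-finite motives, where the analogue $H^*(G,\Q) \simeq \Lambda^* H^1(G,\Q)$ is a classical fact for any commutative algebraic group.
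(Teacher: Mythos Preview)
The paper does not prove this theorem. It appears only in the introduction, stated as background and attributed to Ancona, Enright-Ward, and Huber with a citation to \cite[Theorem~3.1.4]{AEWH}; no proof or sketch is given anywhere in the text. There is therefore no ``paper's own proof'' against which to compare your proposal.

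Regarding the proposal on its own merits: the overall architecture---handle tori and abelian varieties separately, then combine via the extension---is indeed close in spirit to what Ancona--Enright-Ward--Huber do. Two steps, however, are not justified as written. First, the claim that Kimura-odd objects are ``closed under extensions'' with additive dimension is delicate in a triangulated setting: Kimura finiteness is a notion for pseudo-abelian $\Q$-linear tensor categories, and a distinguished triangle in $\DMeet(k,\Q)$ does not automatically yield oddness of the middle term from oddness of the outer two. The original paper handles this point with more care. Second, your fallback of checking the isomorphism via ``a conservative realization functor restricted to Kimura-finite motives'' presupposes a conservativity statement that is, in this generality, an open conjecture rather than a theorem; you cannot invoke it to close the argument. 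So the outline is reasonable as a plan, but the semi-abelian step is not yet a proof.
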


Another significant result \cite[Proposition 3.3.3]{Org} is due Orgogozo. Let $\mathbf{sAb}$ denote the category of semi-abelian $k$-varieties and $\mathbf{Ab_k}$ the category of abelian $k$-varieties.
\begin{Prop}
The functor $M_1\colon G\mapsto M_1(G)_\Q$ is an exact functor from $\mathbf{sAb}$ to $\DMeet(k, \Q)$. It is isogeny invariant and induces an exact functor, which we also denote by $M_1(-)_\Q$, from $\mathbf{sAb}_{\Q}$ to $\DMeet(k, \Q)$. Moreover, we have
\[
    \Hom_{\mathbf{sAb}_{\Q}}(G, H)\simeq\Hom_{\DMeet(k, \Q)}(M_1(G)_\Q, M_1(H)_\Q).
\]
\end{Prop}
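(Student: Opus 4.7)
The strategy is to establish three claims in order: exactness of $M_1(-)_\Q$, its isogeny invariance, and its full faithfulness. Each will follow from a sheaf-theoretic analysis of $\underline{G}$ on the \'etale site with $\Q$-coefficients, combined with the fact that $\underline{G}$ is an $\mathbb{A}^1$-invariant \'etale sheaf with transfers, hence lies in the heart of $\DMeet(k,\Q)$.

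\emph{Step 1 (Exactness).} Given a short exact sequence $0 \to G' \to G \to G'' \to 0$ in $\mathbf{sAb}$, the quotient map $G \to G''$ is a $G'$-torsor, hence surjective in the \'etale topology. The induced sequence of \'etale sheaves with transfers $0 \to \underline{G'} \to \underline{G} \to \underline{G''} \to 0$ is therefore short exact. After $\otimes_\Z\Q$ the three terms sit in the heart of $\DMeet(k,\Q)$, so this short exact sequence gives the desired distinguished triangle.

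\emph{Step 2 (Isogeny invariance).} The kernel of an isogeny is a finite \'etale group $k$-scheme $K$; on each geometric point it is a torsion abelian group, so $\underline{K}_\Q = 0$ in $\DMeet(k,\Q)$. Combined with Step 1 applied to $0 \to K \to G \to G' \to 0$, every isogeny $G \to G'$ becomes an isomorphism under $M_1(-)_\Q$, and the functor descends to $\mathbf{sAb}_\Q$.

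\emph{Step 3 (Full faithfulness).} Because both $\underline{G}_\Q$ and $\underline{H}_\Q$ lie in the heart of $\DMeet(k,\Q)$, morphisms between them in $\DMeet(k,\Q)$ coincide with morphisms in the abelian category of \'etale sheaves with transfers tensored with $\Q$. By the Yoneda lemma applied to the underlying presheaves, such a morphism $\phi$ is determined by its value on $\mathrm{id}_G \in \underline{G}(G)_\Q$, which (after clearing a denominator) produces a morphism of $k$-schemes $f\colon G \to H$. Compatibility of $\phi$ with the transfer associated to each multiplication-by-$n$ isogeny $[n]\colon G \to G$, which is finite \'etale in characteristic $0$, forces $f$ to respect the group law: these transfers act as ``summation along the fibres of $[n]$'' and encode the group structure of $G$ and $H$ respectively, so naturality pins $f$ down to a group-scheme morphism up to isogeny. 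Conversely, every group-scheme morphism manifestly gives a morphism of sheaves with transfers. This yields the bijection $\Hom_{\mathbf{sAb}_\Q}(G,H) \simeq \Hom_{\DMeet(k,\Q)}(M_1(G)_\Q, M_1(H)_\Q)$.

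\emph{Main obstacle.} The technical core is Step 3. Steps 1 and 2 are essentially formal consequences of the \'etale-local behaviour of semi-abelian varieties and of the vanishing of torsion after $\otimes_\Z\Q$. What requires care in Step 3 is extracting the group-homomorphism property of $f$ from transfer compatibility alone: the transfer structure on $\underline{G}$ along the $[n]$ implicitly carries the group law, and one must verify that being a morphism of sheaves with transfers rigidifies a naked scheme morphism $f\colon G\to H$ into a group morphism, with the denominator appearing from clearing the Yoneda identification being absorbed in passing to $\mathbf{sAb}_\Q$.
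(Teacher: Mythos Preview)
The paper does not prove this proposition; it is quoted from Orgogozo \cite[Proposition~3.3.3]{Org} as a known background result, so there is no argument in the paper to compare against directly.

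Your Steps~1 and~2 are correct and standard. In Step~3, however, you misidentify the crux. A morphism $\phi\colon \underline{G}_\Q\to\underline{H}_\Q$ in the heart is in particular a morphism of presheaves of \emph{abelian groups}, hence additive: $\phi_X(g_1+g_2)=\phi_X(g_1)+\phi_X(g_2)$ for all $g_1,g_2\in G(X)$. Writing $\phi_G(\mathrm{id}_G)=\tfrac{1}{n}f$ with $f\in H(G)$ and evaluating this additivity at $X=G\times G$ with $g_i=\mathrm{pr}_i$ already yields $f\circ m_G=m_H\circ(f\times f)$ in $H(G\times G)_\Q$. Since $G\times G$ is connected and the torsion of $H(G\times G)$ consists of constant maps into the torsion of $H$, one finds (after replacing $f$ by $f-f(e_G)$, where $f(e_G)$ is torsion because $\phi(0)=0$) that $f$ is a genuine group homomorphism. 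Thus the group-law compatibility you flag as the ``main obstacle'' is a formal consequence of additivity; the transfer structure along the isogenies $[n]$ is never needed. Your proposed route through transfers is not obviously wrong, but you do not carry it out, and it is more involved than the direct argument: exploiting compatibility with $[n]_*$ would in any case require splitting $[n]_*(\mathrm{id}_G)$ as a sum in $G(G)$ and then invoking additivity of $\phi$, at which point the simpler argument above is already available.
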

 
We obtain a similar result directly for $\underline{G}$ without tensoring with $\Q$. Let us denote by $\mathcal{PS}h_{tr}$ the category of presheaves on the category $Cor_k$. The objects of $Cor_k$ are separated schemes of finite type over $k$ such that any finite subset of any object $X\in Cor_k$ is contained in an affine open subset. The morphisms between $X$ and $Y$ are the elements of the group completion of the monoid $Mor_{Sch_{k}}(S^\bullet (X), S^\bullet (Y))$, and the composition of two morphisms is given by the usual composition of maps of schemes. We  denote by $Sh_{Nis}(Cor_k)$ the category of Nisnevich sheaves with transfers on $Cor_k$.

It turns out that the resolution of Proposition~\ref{int_acyclic_pointed} can be used for computations in $\mathcal{PS}h_{tr}$. Our final main result is the following theorem:

\begin{Th}\label{main_intro}
Let $A$ and $B$ be two abelian varieties over field $k$. Then the complex  $\RHom_{\mathcal{PS}h_{tr}}(\underline{A}, \underline{B})$ is quasi-isomorphic to $\Hom_{\mathbf{Ab_k}}(A, B)$.
In particular,
\[
    \Ext^i_{\mathcal{PS}h_{tr}}(\underline{A}, \underline{B})=0, \text{ for } i\geq 1.
\]
\end{Th}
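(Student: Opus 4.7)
The plan is to combine the resolution from Proposition~\ref{int_acyclic_pointed} (viewed in $\mathcal{PS}h_{tr}$, an extension of the $PSh(cor_k)$ version stated there) with the rigidity of morphisms into abelian varieties. Applying $\Hom_{\mathcal{PS}h_{tr}}(-, \underline{B})$ to the resolution
\[
\ldots \to (S^\bullet_{red})^{\circ 3}(A)^+ \to (S^\bullet_{red})^{\circ 2}(A)^+ \to S^\bullet_{red}(A)^+ \to \underline{A}
\]
should yield a cosimplicial complex computing $\RHom_{\mathcal{PS}h_{tr}}(\underline{A}, \underline{B})$. The key enabling fact is that, by the very definition of $Cor_k$ (whose morphisms are group completions of $Mor_{Sch_k}(S^\bullet(-), S^\bullet(-))$), each term $(S^\bullet_{red})^{\circ n}(A)^+$ is (ind-)representable in $\mathcal{PS}h_{tr}$ and hence projective, so the resolution computes $\RHom$ by termwise evaluation.

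Next, by Yoneda,
\[
\Hom_{\mathcal{PS}h_{tr},\,*}\bigl((S^\bullet_{red})^{\circ n}(A)^+,\, \underline{B}\bigr) \;\cong\; \underline{B}_*\bigl((S^\bullet_{red})^{\circ n}(A)\bigr),
\]
the group of pointed morphisms from the iterated reduced symmetric power (an ind-scheme) to $B$ sending the basepoint to $0 \in B$. By rigidity of morphisms to abelian varieties, any pointed morphism $A^m \to B$ decomposes as a sum of group homomorphisms $A \to B$; imposing $S_m$-invariance forces all summands to coincide, giving a single element of $\Hom_{\mathbf{Ab_k}}(A, B)$. Iterating through the nested reduced symmetric powers, one obtains $\underline{B}_*\bigl((S^\bullet_{red})^{\circ n}(A)\bigr) \cong \Hom_{\mathbf{Ab_k}}(A, B)$ for every $n \geq 1$.

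Finally, we track the cosimplicial structure. The simplicial face maps arise from the monoidal structural maps (multiplication and unit) of the symmetric power construction; under the rigidity identification, they act as the identity on $\Hom_{\mathbf{Ab_k}}(A, B)$. The resulting cosimplicial abelian group is therefore constantly $\Hom_{\mathbf{Ab_k}}(A, B)$ with every coface equal to the identity; its normalized complex is $\Hom_{\mathbf{Ab_k}}(A, B)$ concentrated in degree~$0$, which yields the theorem. The "constant" contribution $B(k)$ from the splitting $\underline{B}(X) = \underline{B}_*(X) \oplus B(k)$ on a pointed connected $X$ is absorbed by invoking the $\Hom(-, \Z)$-acyclicity from Proposition~\ref{int_acyclic_pointed} tensored with $B(k)$.

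The main obstacle will be executing the rigidity argument cleanly for the ind-scheme iterates $(S^\bullet_{red})^{\circ n}(A)$, and verifying precisely that the face maps act by the identity on $\Hom_{\mathbf{Ab_k}}(A, B)$. This requires a careful analysis of the monoid structure maps on symmetric powers of abelian varieties, together with the universal property of the Albanese map; a secondary but essential point is confirming that representability in $\mathcal{PS}h_{tr}$ (hence projectivity) really does hold for the ind-scheme terms of the resolution.
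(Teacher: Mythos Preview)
Your strategy is exactly the paper's: use the pointed resolution of $\underline{A}$ by iterated reduced symmetric powers, evaluate into $\underline{B}$, invoke rigidity to identify every term with $\Hom_{\mathbf{Ab_k}}(A,B)$, and check that all cofaces become the identity. The difference lies in how you justify the first step, and there your proposal has a genuine gap.

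You assert that $(S^\bullet_{red})^{\circ n}(A)^+$ is ``(ind-)representable in $\mathcal{PS}h_{tr}$ and hence projective,'' and you flag this as a point to confirm. It is in fact false for $n\geq 2$: the objects of $Cor_k$ are separated schemes of \emph{finite type}, so the ind-scheme $(S^\bullet_{red})^{\circ n}(A)$ is not representable there, only a filtered colimit $\colim_t \Z_{tr}[(S^t)^{\circ(n-1)}(A)/e]$ of representables. A filtered colimit of projectives is generally not projective, so you cannot simply read off $\RHom$ as $\Hom$ termwise. The paper's Proposition~\ref{resolutionAB} supplies the missing argument: one has
\[
\RHom_{\mathcal{PS}h_{tr}}\bigl((S^\bullet_{red})^{\circ n}(A)^+,\underline{B}\bigr)\;\simeq\; R\lim_t \Hom_{\mathcal{PS}h_{tr}}\bigl(\Z_{tr}[(S^t)^{\circ(n-1)}(A)/e],\underline{B}\bigr)\;\simeq\; R\lim_t Mor_{Sch_*}\bigl((S^t)^{\circ(n-1)}(A),B\bigr),
\]
and then rigidity is used \emph{again}, this time to show that the transition maps in this tower are isomorphisms for $t\geq 1$. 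That gives Mittag--Leffler, hence $R\lim=\lim\cong Mor_{Sch_*}(A,B)$, and only then is the complex concentrated in degree~$0$. Without this $R\lim$ step you have not ruled out a $\lim^1$ contribution, i.e.\ a spurious $\Ext^1$.

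A smaller point: your claim that ``the face maps act as the identity'' is correct but not as immediate as you suggest. In the paper this is done by an induction on the simplicial level, using the explicit relations $\partial_0\circ i=\mathrm{id}$ and $\partial_j\circ i_{m+1}=i_m\circ\partial_{j-1}$ coming from the cotriple resolution, together with a separate check for the ``extra'' coface coming from $S^\bullet_{red}(B)\to\underline{B}$. Your remark about the $B(k)$ summand is unnecessary: the resolution is already built from the pointed objects $\Z_{tr}[-/e]$, so the constant part never enters.
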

We further extend this result to Nisnevich sheaves with transfers:
\begin{Cor}
Let $A$, $B$ be abelian varieties over $k$. Then $\RHom_{Sh_{Nis}(Cor_k)}(\underline{A}, \underline{B})$ is concentrated in degree $0$ and is quasi-isomorphic to $\Hom_{\mathbf{Ab_k}}(A, B)$.
\end{Cor}

This paper is organized as follows. In section~\ref{Sec2}, we review Voevodsky's theory of cd-structures.  In Section~\ref{Sec3}, we present essential results from Berkovich's work on the homotopy type of the analytification $X^{an}$ of an algebraic variety $X$ over a trivially-valued field $k$, following Thuillier's treatment in \cite{THU}. These two sections summarize known results and provide necessary background. In section~\ref{Sec4}, we prove the comparison isomorphism between $H^*(|X^{an}|,\Z)$ and $H^*_{cdh}(X, \Z)$. Section~\ref{Sec5} begins with a review of the general theory related to Voevodsky motives. We then apply simplicial machinery to obtain a resolution of $\underline{G}$ using iterated (reduced) symmetric power construction. To verify its $\Hom_{Sh_{Nis}(cor_k)}(-, \Z)$-acyclity, we use the comparison results from section~\ref{Sec4}. Then we use this resolution to show that $\RHom_{Sh_{Nis}(cor_k)}(\underline{G}, \Z)$ is trivial. Finally, for two abelian varieties $A$ and $B$ over $k$, we compute $\RHom_{\mathcal{PS}h_{tr}}(\underline{A}, \underline{B})$ which turns out to be isomorphic to $\Hom_{\mathbf{Ab_k}}(A,B)$.

\subsection{Acknowledgments} This work is part of the first author's Ph.D. thesis at HSE University, written under the supervision of the second. The first author would like to thank Amaury Thuillier for answering questions about his results on the homotopy type of Berkovich analytification presented in section~\ref{Sec3}. Research of V.V. was supported by
the University of Chicago, Princeton University, and the Simons Collaboration on Perfection
in Algebra, Geometry, and Topology.

\section{Cd-structures and cdh-topology}\label{Sec2}
In this section, we review the definitions of Nisnevich and cdh-topologies on the category $Sch_k$ using cd-structures and their associated topologies. This approach was developed by Voevodsky in \cite{VoevodI} and \cite{VoevodII}. For a concise introduction to these topics, we refer the reader to \cite[Section 3]{CHSW}.

\begin{Def}
    Let $\mathcal{C}$ be a category with an initial object. A cd-structure on $\mathcal{C}$ is a class $\mathcal{P}$ of commutative squares in $\mathcal{C}$ of the form
    \[
    \xymatrix{ W\ar[d]\ar[r] & V \ar[d]^p\\ U\ar[r]^e & X}
    \]
    that is closed under isomorphisms.
The squares in $\mathcal{P}$ are called distinguished. 
\end{Def}

\begin{Def}\label{topology}
    The topology $t_\mathcal{P}$ associated with a cd-structure $\mathcal{P}$ is the weakest Grothendieck topology such that for any distinguished square from $\mathcal{P}$, the sieve $(p, e)$ generated by the morphisms $\{p\colon V\to X, e\colon U\to X\}$ is a covering sieve, and the empty sieve is a covering sieve of the initial object $\emptyset$.
\end{Def}

In this paper, we mainly focus on Nisnevich and cdh-topologies which can be obtained as associated topologies of upper and combined cd-structures, respectively. 
\begin{Def}
    A cartesian square
    \[
    \xymatrix{ W\ar[d]\ar[r] & V \ar[d]\\ U\ar[r] & X}
    \]
    in $Sch_k$ is called an elementary Nisnevich square if $U\to X$ is an open embedding, $V\to X$ is \'etale and $(V-W)\to (X-U)$ is an isomorphism. The cd-structure where the distinguished squares are elementary Nisnevich squares is called the upper cd-structure or Nisnevich cd-structure. The associated topology is called the upper cd-topology. It is proved in \cite[Proposition 2.17]{VoevodII}  that it coincides with the Nisnevich topology.
\end{Def}

\begin{Def}
    A cartesian square
    \[
    \xymatrix{ \widetilde{Z}\ar[d]\ar[r] & \widetilde{X} \ar[d] \\ Z\ar[r] & X}
    \]
    in $Sch_k$ is called an abstract blow-up if $Z\to X$ is a closed immersion, $\widetilde{X}\to X$ is proper and $(\widetilde{X}-\widetilde{Z})_{red}\to (X-Z)_{red}$ is an isomorphism. The cd-structure where the distinguished squares are abstract blow-ups is called the lower cd-structure.
\end{Def}
\begin{Def}
    The union of the upper and lower cd-structures is called the combined cd-structure. The cdh-topology is the topology associated with the combined cd-structure.
\end{Def}

\begin{Def}
A presheaf $F^\bullet$ of cochain complexes on $Sch_k$ satisfies the Mayer-Vietoris property for a class of cartesian squares $\mathcal{P}$ if for any square in that class, the resulting square after applying $F^\bullet$ is a homotopy pullback. This is equivalent to saying that for any square 
\[
    \xymatrix{ W\ar[d]\ar[r] & V \ar[d]\\ U\ar[r] & X}
\]
in $\mathcal{P}$, the total complex of $(F^\bullet(X)\to F^\bullet(U)\bigoplus F^\bullet(V)\to F^\bullet(W))$ is acyclic. In particular,  any such square in $\mathcal{P}$ induces a long exact Mayer-Vietoris sequence in cohomology:
\[
    \ldots\to H^n(X, F^\bullet)\to
    H^n(U, F^\bullet)\oplus H^n(V, F^\bullet)\to
    H^n(W,F^\bullet)\to H^{n+1}(X, F^\bullet)\to\ldots.
\]
\end{Def}
We say that $F^\bullet$ satisfies Nisnevich descent for $Sch_k$ if $F^\bullet$ satisfies the MV-property for elementary Nisnevich squares, and $F^\bullet$ satisfies cdh-descent if $F^\bullet$ satisfies the Mayer-Vietoris property for all elementary Nisnevich and abstract blow-up squares. We note that Voevodsky used  the term "flasque" for presheaves with MV-property in \cite{VoevodI}.

\begin{Def}
Let $\mathcal{C}$ be a category with cd-structure $\mathcal{P}$. A presheaf $F^\bullet$ of cochain complexes on $\mathcal{C}$ is called a hypersheaf for the topology generated by $\mathcal{P}$ if the natural map $F^\bullet(U) \to R\Gamma(U, F^\bullet)$ is a quasi-isomorphism for each object $U\in\mathcal{C}$. The term quasifibrant is also used for such a presheaf.
\end{Def}

\begin{Th}\label{MV_quasifibrant}
    Let $Sch_k$ be the category of schemes of finite type over $k$, and let $\mathcal{P}$ be either upper or combined cd-structure on $Sch_k$. Then a complex of presheaves $F^\bullet$ is a hypersheaf if and only if it satisfies the Mayer-Vietoris property.
\end{Th}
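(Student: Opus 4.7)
The plan is to prove both directions by directly leveraging Voevodsky's abstract framework for topologies associated to cd-structures, which reduces the statement to verifying that the upper and combined cd-structures on $Sch_k$ are \emph{complete, regular, and bounded} in the sense of \cite{VoevodI, VoevodII}.

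For the easy direction, suppose $F^\bullet$ is a hypersheaf. Given a distinguished square with morphisms $p\colon V\to X$ and $e\colon U\to X$, these generate a covering sieve for the topology $t_{\mathcal{P}}$ by Definition~\ref{topology}. The hyperdescent property $F^\bullet(X)\simeq R\Gamma(X, F^\bullet)$ identifies $F^\bullet(X)$ with the totalization of the Čech-hypercohomology bicomplex of the cover $\{U\sqcup V\to X\}$. Because the squares in the upper and lower cd-structures are cartesian and satisfy $V\times_X U=W$, $V\times_X V$ splits off a copy of $V$ coming from the diagonal, and inductively the whole Čech nerve collapses after a chain-homotopy to the two-term complex $F^\bullet(U)\oplus F^\bullet(V)\to F^\bullet(W)$. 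Thus the MV total complex is quasi-isomorphic to $F^\bullet(X)$, which is exactly the Mayer-Vietoris property.

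For the converse, first observe that the Nisnevich cd-structure is proved to be complete, regular, and bounded in \cite[\S2]{VoevodII}, and the combined cd-structure is shown to inherit these properties in \cite[Lemmas~2.5 and 2.10]{VoevodII} (the density structure for the combined case is built using Krull dimension and the behavior of abstract blow-ups along dimension-decreasing closed immersions). I would recall these verifications, highlighting in particular the density structure $D_\ast$ where $D_d(X)$ consists of open subschemes $U\subset X$ with $\dim(X\setminus U)< \dim X - d$; the regularity check amounts to showing that distinguished squares restricted to a dense open remain distinguished up to the density filtration.

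Granted these structural properties, I would invoke Voevodsky's main theorem (\cite[Theorem~5.9 and Corollary~5.10]{VoevodI}, refined for unbounded complexes in \cite{VoevodII}) stating that for a complete, regular, bounded cd-structure $\mathcal{P}$, a presheaf of cochain complexes satisfies the $t_{\mathcal{P}}$-hypersheaf condition if and only if it has the Mayer-Vietoris property for $\mathcal{P}$. Sketch of how this theorem is proved: completeness ensures that a presheaf with MV is already a sheaf in $t_{\mathcal{P}}$; regularity lets one promote MV for a single square to MV for the iterated Čech nerve, so the Čech cohomology of a hypercover reduces to a finite colimit of MV total complexes; boundedness provides a finite cohomological dimension via the density structure, which by a spectral sequence argument shows that Čech cohomology computes derived cohomology and that the sheafification $F^\bullet\to R\Gamma_{t_{\mathcal{P}}}(-, F^\bullet)$ is a quasi-isomorphism.

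The main obstacle is verifying boundedness for the combined cd-structure: the Nisnevich case is straightforward once the density structure by codimension of the complement is introduced, but for the combined case one must show that abstract blow-ups behave well with respect to the density structure and that restrictions of distinguished squares to dense opens can be refined to distinguished squares in $D_{d-1}$. Once this technical point is established (following \cite[Proposition~2.10]{VoevodII}), the theorem follows uniformly in both cases.
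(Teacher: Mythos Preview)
Your proposal is correct and follows essentially the same route as the paper: the paper's proof is simply a citation to \cite[Theorem~3.4, Theorem~3.7, Example~3.2]{CHSW}, which package precisely the Voevodsky machinery you describe (verifying that the upper and combined cd-structures on $Sch_k$ are complete, regular, and bounded, and then invoking the general equivalence between the Mayer--Vietoris property and the hypersheaf condition from \cite{VoevodI,VoevodII}). You have unpacked more of the argument than the paper does, but the strategy is identical.
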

\begin{proof}
   The proof is the combination of Theorem 3.4, Theorem 3.7 and Example 3.2 in  \cite{CHSW} and relies on results obtained by Voevodsky in \cite{VoevodI} and \cite{VoevodII}.
\end{proof}
\begin{Rem}
     Theorem~\ref{MV_quasifibrant} says that $F^\bullet$ satisfies Nisnevich/cdh-descent if and only if is a hypersheaf for the corresponding topology. This theorem will be crucial in section~\ref{Sec4}.
\end{Rem}

\section{Berkovich analytification}\label{Sec3}

In this section, we present an introduction to the results of Berkovich and Thuillier on the homotopy type of the analytification of a scheme $X$ over field $k$ equipped with a trivial valuation. We first recall the definitions of two types of $k$-analytic varieties that can be associated with a $k$-scheme $X$. Then, we review some necessary results on the homotopy type of the analytification of a toric $k$-variety. In the final subsection, we extend these results to more general $k$-schemes. Our main reference is \cite{THU}.

\subsection{Reminder on different types of analytifications}

\begin{Def}
    Let $X=\mathcal{M}(A)$ be a non-empty $k$-affinoid space. The union of all closed non-empty subspaces $\Gamma$ of $X$ such that $\max_X|f|=\max_{\Gamma}|f|$ for all elements $f\in A$ admits a minimal element denoted by $\Gamma(X)$. This is called Shilov boundary of $X$. This set is finite and has the following important properties:
    \begin{itemize}
        \item When $A$ is strictly $k$-affinoid, $\Gamma(X)$ is the inverse image under the reduction map $r_A\colon \mathcal{M}(A)\to \Spf(A^\circ)$ of the set of generic points of the special fiber of the formal scheme $\Spf(A^\circ)$. 
        \item For any non-archimedean extension $k'/k$, the Shilov boundary $\Gamma(X)$ is the image of $\Gamma(X_{k'})$ under the canonical projection of $\mathcal{M}(A\widehat{\otimes}_k k')$ on $X$.
    \end{itemize}
\end{Def}
Let $X=\mathcal{M}(A)$ be a $k$-affinoid space. There is a useful morphism of locally ringed $k$-spaces $\rho_A\colon\mathcal{M}(A)\to\Spec(A)$ that sends a semi-norm $x\in\mathcal{M}(A)$ to its kernel $\{f \in A| |f(x)|=0\}$ at the level of underlying spaces. It induces identity on $A$ (both sides of $\rho_A$ has $A$ as global sections of their corresponding structure sheaf). 
\begin{Def}
    Let $X$ be a scheme locally of finite type over $k$, and let $\Phi_X$ be the functor from the category of $k$-analytic spaces to the category of sets that assigns to every $k$-analytic space $\mathcal{Y}$ the set of morphisms of locally ringed $k$-spaces $\Hom_k(\mathcal{Y}, X)$. This functor is representable by a $k$-analytic space $X^{an}$ and a morphism of locally ringed $k$-spaces $\rho_X\colon X^{an}\to X$. 
\end{Def}

Another construction uses the fact that $k$ is equipped with a trivial valuation. This allows us to consider $X$ as a formal scheme over the ring $k^\circ =k$. We now recall the definition of the functor commonly called the "generic fiber".

Let $\mathfrak{X}$ be a formal scheme locally of finite type over $k$. We define the functor $\Phi_\mathfrak{X}$ from the category of $k$-analytic spaces to the category of sets as follows: to any $k$-analytic space $Y$, we associate the set of morphisms of $k$-ringed sites $\phi\colon Y_G\to \mathfrak{X}$ satisfying two conditions:
\begin{itemize}
    \item For any open affine $U\subset \mathfrak{X}$ and any affinoid domain $V$ of $Y$ with $\phi(V)\subset U$, the homomorphism $\phi^\# \colon\mathcal{O}_\mathfrak{X}(U)\to\mathcal{O}_{Y_G}(V)$ is continuous and bounded, i.e. $\|\phi^\#(a)|\leq 1$ for any $a\in\mathcal{O}_\mathfrak{X}(U)$.
    \item For any point $y\in Y$, the homomorphism
    \[
        \xymatrix{ \mathcal{O}_{\mathfrak{X},\phi(y)}\ar[r]^{\phi^\#_y} & \mathcal{O}_{Y,y}\ar[r] & \mathcal{H}(y)}
    \]    
    has image in $\mathcal{H}(y)^\circ$. Moreover, the induced homomorphism $\mathcal{O}_{\mathfrak{X},\phi(y)}\to \mathcal{H}(y)^\circ$ is local.
\end{itemize}
Here $Y_G$ denotes the site induced by the Grothendieck topology defined using analytic domains of $Y$.
\begin{Prop}
    Let $\mathfrak{X}$ be a scheme locally of finite type over $k$. Then the functor $\Phi_\mathfrak{X}$ is representable by a $k$-analytic space $\mathfrak{X}^\beth$ and a bounded morphism of $k$-ringed sites $r_\mathfrak{X}\colon \mathfrak{X}^\beth_G\to \mathfrak{X}$.
\end{Prop}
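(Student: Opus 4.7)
The plan is to reduce to the affine case, verify representability there by an explicit construction inside a Berkovich polydisc, and then glue along an affine cover.

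First, I would handle the affine case. Suppose $\mathfrak{X} = \Spec(A)$ with $A$ a finitely generated $k$-algebra. Choosing a presentation $k[T_1,\ldots,T_n] \twoheadrightarrow A$ with kernel $I$, I would define $\mathfrak{X}^\beth$ as the Zariski-closed analytic subspace of the closed unit polydisc $\mathcal{M}(k\{T_1,\ldots,T_n\})$ cut out by $I$. Concretely, $\mathfrak{X}^\beth$ can be described as the subset of $\mathfrak{X}^{an}$ (constructed via $\rho_A$) consisting of those multiplicative seminorms $x$ on $A$ that satisfy $|f(x)| \leq 1$ for every $f\in A$, which endows it with the structure of a strictly $k$-affinoid space. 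Independence of the presentation follows because any two presentations are linked by a map of polynomial rings that induces a bounded morphism of the polydiscs, and the universal property of closed immersions then identifies the two resulting analytic spaces canonically.

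Next I would verify the universal property for such an affine $\mathfrak{X}$. Given a $k$-analytic space $Y$, an element of $\Phi_\mathfrak{X}(Y)$ is determined by the ring map $A \to \mathcal{O}_{Y_G}(Y)$, hence by elements $t_i\in\mathcal{O}_{Y_G}(Y)$ satisfying the relations from $I$. The first boundedness condition, read on each affinoid domain of $Y$, forces $\|t_i\|\leq 1$, while the second (pointwise locality into $\mathcal{H}(y)^\circ$) is then automatic. Conversely, any such tuple extends uniquely, via the universal property of the closed unit polydisc in the category of $k$-analytic spaces, to a morphism $Y\to \mathfrak{X}^\beth$, and chasing definitions shows that the induced map of $k$-ringed sites satisfies both bullets in the definition of $\Phi_\mathfrak{X}$. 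This yields the natural bijection $\Hom_k(Y,\mathfrak{X}^\beth) \simeq \Phi_\mathfrak{X}(Y)$. Applying this to $Y = \mathfrak{X}^\beth$ and the identity produces the canonical $r_\mathfrak{X}\colon \mathfrak{X}^\beth_G \to \mathfrak{X}$.

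For a general $\mathfrak{X}$, I would fix an affine open cover $\{U_\alpha\}$. A key subclaim is that if $U \hookrightarrow V$ is an open immersion between affine schemes of finite type, then the induced morphism $U^\beth \to V^\beth$ is the inclusion of an analytic domain; this is proved by examining the construction for basic open subsets $D(f) \subset \Spec(A)$, whose $\beth$-construction is the Laurent domain in $V^\beth$ defined by $|f|\geq 1$ together with the appropriate affinoid subdomains refining it, and then by gluing. Granting this, the $U_\alpha^\beth$ carry canonical analytic-domain identifications over the pairwise intersections $(U_\alpha\cap U_\beta)^\beth$, and the cocycle condition on triple intersections follows again from the universal property. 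Gluing in the category of $k$-analytic spaces produces $\mathfrak{X}^\beth$, and the morphisms $r_{U_\alpha}$ glue to the desired $r_\mathfrak{X}$. Representability is then a formal consequence of representability on each $U_\alpha$ together with the Grothendieck topology axioms on $Y_G$.

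The main obstacle, as I see it, is the analytic-domain statement used in the gluing step: one must check that the $\beth$-functor, defined a priori only by a universal property, sends open immersions of schemes to embeddings of analytic domains, and that it respects fiber products along such opens. Without this, gluing of analytic spaces cannot be carried out in the sense of \cite{BerII}. The remaining subtlety is to confirm that the resulting space (which will in general be neither compact nor affinoid, e.g.\ already for $\mathfrak{X}=\mathbb{A}^1_k$ where one recovers the closed Berkovich disc, whereas for $\mathfrak{X}=\mathbb{A}^1_k\setminus\{0\}$ one obtains a Laurent annulus) is a $k$-analytic space in Berkovich's sense, and that the two conditions of the definition of $\Phi_\mathfrak{X}$ really are captured by this gluing, which one verifies by localizing both conditions to affinoid domains of $Y$ mapping into a single $U_\alpha$.
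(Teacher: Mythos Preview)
The paper does not give its own proof of this proposition: it simply refers to \cite[Proposition/Definition 1.3]{THU}. Your outline---explicit construction in the affine case as a closed analytic subspace of the unit polydisc, verification of the universal property there, and gluing along an affine cover after checking that $\beth$ sends principal opens $D(f)\subset\Spec(A)$ to affinoid (Laurent) domains---is precisely the standard argument and is the route Thuillier follows; so in substance your approach matches what the paper is invoking.

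One small correction is worth flagging. In your final parenthetical you assert that $\mathfrak{X}^\beth$ ``will in general be neither compact nor affinoid'' and offer $\mathbb{A}^1_k$ and $\mathbb{A}^1_k\setminus\{0\}$ as illustrations. Both examples are affine, and in both cases $\mathfrak{X}^\beth$ \emph{is} a compact $k$-affinoid space: for $\mathbb{A}^1_k$ you get the closed unit disc $\mathcal{M}(k\{T\})$, and for $\mathbb{G}_{m,k}$ you get the ``unit circle'' $\{|T|=1\}=\mathcal{M}(k\{T,T^{-1}\})$, which is an affinoid Laurent domain, not an open annulus. Non-compactness of $\mathfrak{X}^\beth$ genuinely arises only when $\mathfrak{X}$ is merely locally of finite type (e.g.\ an infinite disjoint union), which is allowed by the hypothesis but is not what your examples illustrate. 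This does not affect the logic of your construction; it only means the parenthetical remark should be revised or dropped.
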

\begin{proof}
        See \cite[Proposition/Definition 1.3]{THU}.
\end{proof}

\begin{Rem}
For a scheme $X$ locally of finite type over $k$, we will compare $X^{an}$ and $X^\beth$ in Proposition~\ref{Compare}. It is worth noting that for any non-archimedean extension $K$ of $k$, the set $X^{an}(K)$ of $K$-points is precisely $X(K)$, while $X^\beth(K)$ is the set $X(K^\circ)$ of $K^\circ$-points of $X$. For example, if $X=\mathbb{A}^n_k$, then the underlying set of $X^{an}$ consists of all multiplicative semi-norms on $k[T_1,\ldots, T_n]$ extending the absolute value on $k$, while the underlying set of $X^\beth$ consists of all multiplicative semi-norms on $k[T_1,\ldots, T_n]$ that are bounded by $1$.
\end{Rem}
To establish a connection between $X^\beth$ and $X^{an}$, we first need to extend the definition of the map $\rho_A\colon\mathcal{M}(A)\to \Spec(A)$ to $X^\beth$.
\begin{Lemma} For any scheme $X$ locally of finite type over $k$, there is a unique map of locally-ringed $k$-spaces $\rho_X\colon X^\beth\to X$ such that for any affine $U\subset X$, the restriction of $\rho_X$ to affinoid domain $U^\beth$ coincides with $\rho_U\colon U^\beth\to U$.
\begin{proof}
     See \cite[Proposition 1.6]{THU}.
\end{proof} 
\end{Lemma}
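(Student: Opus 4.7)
The plan is to construct $\rho_X$ by gluing the affine case maps $\rho_U\colon U^\beth \to U$ along an affine open cover of $X$, and then to check that the glued map is well-defined, locally ringed, and unique.

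First I would record the necessary functoriality of $(-)^\beth$ on open immersions: by the representability of $\Phi_{\mathfrak{X}}$ and the universal property of the generic fiber, any open immersion $V \hookrightarrow U$ of schemes locally of finite type over $k$ induces a morphism $V^\beth \hookrightarrow U^\beth$ which identifies $V^\beth$ with an analytic domain of $U^\beth$, and, when $V, U$ are both affine, with an affinoid domain (the preimage of $V$ inside $\mathcal{M}(A)$ where $U = \Spec(A)$). Moreover, if $\{U_i\}$ is an affine open cover of $X$ then the $\{U_i^\beth\}$ form an analytic (in particular $G$-topological) cover of $X^\beth$: indeed, by the universal property, a point $x \in X^\beth$ corresponds to a morphism $\mathcal{M}(\mathcal{H}(x)^\circ) \to X$ whose image (a single closed point of $X$) lies in some $U_i$, and then $x \in U_i^\beth$ by the same universal property applied to $U_i$.

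Next I would verify compatibility on overlaps. For two affine opens $U, V \subset X$, choose an affine open cover $\{W_\alpha\}$ of $U \cap V$. On each $W_\alpha^\beth$, the restrictions of $\rho_U$ and $\rho_V$ both coincide with $\rho_{W_\alpha}$: this reduces to the affine-case statement that for a localization $A \to A[f^{-1}]$ the square
\[
\xymatrix{
\mathcal{M}(A[f^{-1}]) \ar[r]^-{\rho_{A[f^{-1}]}} \ar[d] & \Spec(A[f^{-1}]) \ar[d] \\
\mathcal{M}(A) \ar[r]^-{\rho_A} & \Spec(A)
}
\]
commutes, which is immediate from the fact that $\rho_A$ sends a seminorm $x$ to its kernel $\{f \in A : |f(x)| = 0\}$ and this operation is compatible with localization. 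The gluing axioms for morphisms of locally ringed spaces are thus satisfied, producing a morphism of $k$-ringed spaces $\rho_X\colon X^\beth \to X$.

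Finally I would check the local ringedness and uniqueness. The induced homomorphism on stalks $\mathcal{O}_{X, \rho_X(x)} \to \mathcal{O}_{X^\beth, x}$ is local because on each affine piece, a function $f$ vanishing at the prime $\ker(x) = \rho_X(x)$ satisfies $|f(x)| = 0$ and hence is pulled back to a function vanishing at $x$; this is precisely the classical locality of $\rho_A$. Uniqueness is automatic: any other morphism $\rho'_X$ with the stated property must agree with $\rho_X$ on the covering $\{U_i^\beth\}$. The only mild obstacle is bookkeeping the $G$-topological vs.\ ordinary topological nature of the covers $\{U_i^\beth\}$ to ensure that gluing in the category of locally ringed $k$-spaces (rather than just $k$-analytic spaces) is legitimate, but this follows because the $U_i^\beth$ are in fact open analytic subdomains whose underlying topological spaces form an honest open cover of $|X^\beth|$, the latter being an immediate consequence of the description of points of $X^\beth$ given above.
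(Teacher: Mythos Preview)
The paper does not give its own argument here; it simply cites \cite[Proposition~1.6]{THU}. Your gluing strategy is the natural one, and most of the verification (compatibility on overlaps via the commuting localization square, locality on stalks, uniqueness) is correct. There is, however, a genuine gap in the final step, where you assert that the $U_i^\beth$ are \emph{open} analytic subdomains forming an honest open cover of $|X^\beth|$. This is false: the subspace $U^\beth \subset X^\beth$ coincides with $r_X^{-1}(U)$, and the reduction map $r_X$ is anticontinuous (preimages of Zariski opens are closed), so each $U^\beth$ is a \emph{closed} affinoid domain. Concretely, take $X = \mathbb{A}^1_k$ and $U = \mathbb{G}_{m,k}$: then $U^\beth = \{x \in \mathcal{M}(k[T]) : |T(x)| = 1\}$, which is closed but not open, since the Gauss point lies in $U^\beth$ yet is a limit of the points $\eta_r$ with $|T(\eta_r)| = r < 1$, all lying in the complement. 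Your covering argument only shows that every point of $X^\beth$ belongs to some $U_i^\beth$; it says nothing about openness, and you seem to be importing the (correct) fact that $U^{an}$ is open in $X^{an}$, which does not carry over to the $\beth$-construction.

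Because the cover is closed rather than open, the usual gluing lemma for morphisms of locally ringed spaces does not apply directly, and this is precisely the ``mild obstacle'' you flag but then dismiss on incorrect grounds. The repair is to use how $X^\beth$ is actually built: it is obtained by gluing the compact affinoids $U_i^\beth$ along affinoid subdomains, so its Berkovich topology is the weak (final) topology determined by this closed cover, i.e.\ a subset of $X^\beth$ is open precisely when its intersection with each $U_i^\beth$ is open there. Granting this, continuity of $\rho_X$ follows from continuity of each $\rho_{U_i}$, and the remainder of your argument (sheaf map, locality on stalks, uniqueness) goes through unchanged. Alternatively, one can first construct $\rho_X$ as a morphism of ringed sites $X^\beth_G \to X_{Zar}$, where the affinoid cover is admissible in the $G$-topology by definition, and then check that the underlying set-map is continuous for the Berkovich topology.
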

\begin{Prop}\label{Compare}
Let $X$ be a scheme locally of finite type over $k$. We denote by $i_X\colon X^\beth \to X^{an}$ the morphism of $k$-analytic spaces induced by the morphism of locally ringed $k$-spaces $\rho_X\colon X^\beth \to X$. Then the following hold:
\begin{enumerate}
    \item For every open affine $U \subset X$, the restriction of $i_X$ to the affinoid domain $U^\beth$ is an isomorphism onto the affinoid domain of $X^{an}$.
    \item The morphism $i_X$ induces an isomorphism of $X^\beth$ onto the analytic domain of $X^{an}$ (resp. onto $X^{an})$ if and only if $X$ is separated (resp. proper).
    \item For any morphism $f\colon X\to Y$ of schemes locally of finite type over $k$, the diagram 
        \[
        \xymatrix{ X^\beth\ar[d]_{f^\beth}\ar[r]^{i_X} & X^{an} \ar[d]^{f^{an}} \\ Y^\beth\ar[r]_{i_Y} & Y^{an}}
        \]
    is commutative.
\end{enumerate}
\end{Prop}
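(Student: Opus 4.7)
The plan is to handle the three claims in order, reducing each in turn to a local computation on affines via the universal properties of $(-)^{an}$ and $(-)^\beth$ recalled above.

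For (1), I would work locally: writing $U = \Spec A$ with $A$ a finitely generated $k$-algebra, the underlying set of $U^{an}$ consists of all multiplicative semi-norms on $A$ extending the trivial valuation of $k$, while $U^\beth$ is cut out by the boundedness condition $|a| \leq 1$ for all $a \in A$. Choosing a surjection $k[T_1, \ldots, T_n] \twoheadrightarrow A$ exhibits this subspace as a Zariski-closed subspace of the closed unit polydisc, which is an affinoid domain of $U^{an}$. The identification as an isomorphism of $U^\beth$ with this affinoid domain is then fixed by the universal properties: a morphism from a $k$-analytic space $\mathcal{Y}$ to $U^\beth$ is the datum of a bounded morphism of $k$-ringed sites $\mathcal{Y}_G \to U$, and this is precisely the same as a morphism of locally ringed $k$-spaces $\mathcal{Y} \to U^{an}$ whose image lies in $\{|a| \leq 1\}_{a \in A}$.

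For (2), I would first treat the separated direction as a gluing statement built on (1). Given an affine open cover $\{U_i\}$ of $X$, separatedness ensures that each intersection $U_i \cap U_j$ is again affine, so (1) identifies $(U_i)^\beth \cap (U_j)^\beth$ inside $X^{an}$ with $(U_i \cap U_j)^\beth$; these cocycle-compatible affinoid subdomains glue and realise $X^\beth$ as an analytic domain of $X^{an}$. Conversely, if $i_X$ is an analytic domain embedding, separatedness is extracted from the corresponding embedding property of the analytic diagonal. For the properness direction, I would combine the separated case with the valuative criterion applied pointwise: a point $x \in X^{an}$ is essentially a map $\Spec \mathcal{H}(x) \to X$, and properness of $X$ yields a unique extension $\Spec \mathcal{H}(x)^\circ \to X$, which is precisely a point of $X^\beth$ mapping to $x$. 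The reverse implication uses the valuative criterion in the opposite direction together with the fact that $X^\beth$ is compact for $X$ of finite type, which forces $X^{an}$ to be compact and hence $X$ to be proper.

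Part (3) is purely formal: both compositions $i_Y \circ f^\beth$ and $f^{an} \circ i_X$, viewed as morphisms from $X^\beth$ to $Y^{an}$, classify via the defining universal property of $Y^{an}$ the same morphism of locally ringed $k$-spaces $X^\beth \to Y$, namely $f \circ \rho_X = \rho_Y \circ f^\beth$, and are therefore equal. The hardest part is expected to be the ``only if'' directions in (2): extracting algebro-geometric properties of $X$ from analytic properties of $i_X$ must be checked with care, and one anticipates reducing to test cases such as the affine line with a doubled origin (for separatedness) and $\mathbb{A}^1_k$ versus $\mathbb{P}^1_k$ (for properness). A secondary technical point in (1) is confirming that $U^\beth$ really is an affinoid domain and not merely a subset of $U^{an}$; this amounts to identifying the relevant Banach completion of $A$ in the trivial norm and is implicit in the definition of $(-)^\beth$ on affines. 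Throughout, the argument would follow the treatment in \cite{THU} closely.
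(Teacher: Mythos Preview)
The paper does not prove this proposition at all: its entire proof reads ``See \cite[Proposition 1.10]{THU}.'' Your sketch is therefore already more than what the paper supplies, and since you explicitly say you would follow the treatment in \cite{THU} closely, there is no discrepancy to flag; your outline of (1)--(3) is a reasonable summary of how that argument runs.
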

\begin{proof}
     See \cite[Proposition 1.10]{THU}.
\end{proof}
\subsection{The case of a toric $k$-variety}
Let $T$ be a $k$-torus and $X$ be a toric variety over $k$, that is, an integral $k$-scheme endowed with an action of $T$ such that there is an open orbit $T$-isomorphic to $T$. We denote by $X_0$ the open orbit of $T$ in $X$, by $M=\Hom_{k-Groups}(T, \mathbb{G}_m)$ the group of characters of $T$, and for any character $m\colon T\to \mathbb{G}_m=\Spec (k[S, S^{-1}])$ we define $\chi_m=m^\#(S)$. The action of $T$ on $X$ induces an action of $k$-analytic group $T^\beth$ on $X^\beth$. 

\begin{Def}
For any point $x$ of $X^\beth$, we define $p(x)\in X^\beth$ to be the image of the unique Shilov point of the $\mathcal{H}(x)$-affinoid space $T^\beth \widehat{\otimes}_k\mathcal{H}(x)$ under the canonical map
 \[
        \xymatrix{ T^\beth\widehat{\otimes}_k\mathcal{H}(x)=T^\beth\times \mathcal{M}(\mathcal{H}(x))\ar[r]& T^\beth \times X^\beth \ar[r] &  X^\beth}.
\]

\end{Def}
\begin{Rem}
    \begin{enumerate}
        \item For any $K/k$ non-archimedean extension, the $K$-affinoid space $E=T^\beth \widehat{\otimes}_k K$ has a unique Shilov point.
        \item The map $p$ satisfies the identity $p^2=p$. We define the skeleton of $X$ to be $\Sigma(X)=p(X)=\{x\in X^\beth | p(x)=x\}$.
        \item The map $p$ is continuous, $\Sigma(X)$ is a closed subspace of $X^\beth$ and $p$ defines a retraction of $X^\beth$ onto $\Sigma(X)$.
    \end{enumerate}
\end{Rem}
\begin{proof}
    See \cite[Remark 2.2; Proposition 2.3; Proposition 2.9]{THU}.
\end{proof}

\begin{Rem}
    The closed subspace $\Sigma(X)$ of $X^\beth$ admits a description in terms of the fan of the toric $k$-variety $X$. If we define $\Sigma_0(X)=\Sigma(X)\cap \rho^{-1}(X_0)$, then it provides an intrinsic realization of the fan of the toric $k$-variety $X$ inside $X^\beth$. Then closed subspace $\Sigma(X)$ in the closure of $\Sigma_0(X)$ inside $X^\beth$. For more precise statements, see \cite[Section 2]{THU}.
\end{Rem}

\begin{Lemma}
    Let $t\in [0, 1]$ and $K/k$ be a non-archimedean extension. Then the $K$-affinoid space 
    \[
    G_K(t)=\{x\in T^\beth\widehat{\otimes}_k K\colon |\chi_m -1|\leq t, m\in M\} 
    \]
    is a $K$-analytic subgroup of $T^\beth\widehat{\otimes}_k K$ which has a unique Shilov point denoted by $g_K(t)$.
\end{Lemma}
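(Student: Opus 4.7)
The plan is to first recognize $G_K(t)$ as a Weierstrass subdomain of $T^\beth \widehat{\otimes}_k K$ cut out by finitely many inequalities, then to verify the subgroup axioms by a direct ultrametric computation, and finally to deduce the uniqueness of the Shilov point by identifying the affinoid algebra of $G_K(t)$ with an explicit Tate-type algebra carrying a multiplicative Gauss norm.

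First I would fix a $\Z$-basis $m_1, \ldots, m_n$ of $M$ and observe that on $T^\beth \widehat{\otimes}_k K$ every character satisfies $|\chi_m| = 1$, since $|\chi_m|, |\chi_{-m}| \leq 1$ and $\chi_m \chi_{-m} = 1$. Using the identity $\chi_{m+m'} - 1 = (\chi_m - 1)\chi_{m'} + (\chi_{m'} - 1)$ together with the ultrametric inequality, one shows by induction that the bounds $|\chi_{m_i} - 1| \leq t$ for $i = 1, \ldots, n$ already imply $|\chi_m - 1| \leq t$ for every $m \in M$, with negative exponents handled by $\chi_{-m} - 1 = -\chi_{-m}(\chi_m - 1)$. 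This exhibits $G_K(t)$ as the Weierstrass subdomain $\{|\chi_{m_i} - 1| \leq t : i = 1, \ldots, n\}$ of $T^\beth \widehat{\otimes}_k K$ and in particular as a $K$-affinoid space.

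Next I would verify the subgroup property directly. The identity of $T$ satisfies $\chi_m(e) - 1 = 0$, so $e \in G_K(t)$. For $x, y \in G_K(t)$, the formula $\chi_m(xy) - 1 = (\chi_m(x) - 1)\chi_m(y) + (\chi_m(y) - 1)$ combined with $|\chi_m(y)| = 1$ yields $|\chi_m(xy) - 1| \leq t$, and the identity $\chi_m(x^{-1}) - 1 = -\chi_m(x)^{-1}(\chi_m(x) - 1)$ gives $|\chi_m(x^{-1}) - 1| = |\chi_m(x) - 1| \leq t$. Hence $G_K(t)$ is closed under the group law inherited from $T^\beth \widehat{\otimes}_k K$ and is therefore a $K$-analytic subgroup.

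For the last claim I would make the affinoid algebra of $G_K(t)$ explicit. For $t \in (0, 1)$, the substitution $y_i := \chi_{m_i} - 1$ identifies $G_K(t)$ with the closed polydisk $\mathcal{M}(K\{t^{-1}y_1, \ldots, t^{-1}y_n\})$; for $t = 1$, $G_K(1) = T^\beth \widehat{\otimes}_k K \simeq \mathcal{M}(K\{\chi_{m_1}^{\pm 1}, \ldots, \chi_{m_n}^{\pm 1}\})$; and for $t = 0$, $G_K(0) = \{e\}$ is a single point. In each case the underlying algebra is a Tate algebra whose natural Gauss norm is multiplicative and equal to its spectral seminorm, so by the description of the Shilov boundary recalled at the start of the section the boundary consists of a single point, namely the corresponding Gauss point. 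The main obstacle is largely bookkeeping, namely fitting the three regimes $t = 0$, $0 < t < 1$, and $t = 1$ into a single uniform statement and checking that the reduction to a $\Z$-basis of $M$ is compatible with the Weierstrass presentation; the sole analytic input is the classical fact that a Tate algebra with multiplicative Gauss norm admits a unique Shilov point.
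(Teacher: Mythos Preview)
Your argument is correct. The paper itself does not prove this lemma but simply refers the reader to \cite[Lemma~2.15]{THU}; what you have written supplies a self-contained proof in its place. The three steps you outline---reduction to a $\Z$-basis of $M$ via the identity $\chi_{m+m'}-1=(\chi_m-1)\chi_{m'}+(\chi_{m'}-1)$ to exhibit $G_K(t)$ as a Weierstrass domain, verification of the subgroup axioms by the same identity together with $|\chi_m|=1$, and identification of the affinoid algebra with a polydisk (respectively polyannulus, respectively point) algebra carrying a multiplicative Gauss norm---constitute the standard direct route and match what one finds in Thuillier. One small remark: your subgroup check is phrased in terms of points $x,y\in G_K(t)$, but since $G_K(t)$ is a Weierstrass domain the rigorous statement is that the pullback of $\chi_{m_i}-1$ along the multiplication morphism has sup-norm at most $t$; the algebraic identity you wrote down is exactly what proves this, so the content is unchanged.
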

\begin{proof}
    See \cite[Lemma 2.15]{THU}.
\end{proof}
Now we describe the homotopy which induces the strong deformation retraction of $X^\beth$ onto $\Sigma(X)$.
\begin{Def}
    We define the homotopy $H\colon [0, 1]\times X^\beth \to X^\beth$ as follows: for $t\in [0,1]$ and $x\in X^\beth$, let $H(t, x)$ be the image of the point $g_K(t)$ of $T^\beth\widehat{\otimes}_k\mathcal{H}(x)$ under the canonical map
     \[
        \xymatrix{ T^\beth\widehat{\otimes}_k\mathcal{H}(x)=T^\beth\times \mathcal{M}(\mathcal{H}(x))\ar[r]& T^\beth \times X^\beth \ar[r] &  X^\beth}.
    \]  
\end{Def}
\begin{Prop}
    \begin{enumerate}
        \item The map $H$ is continuous.
        \item For any $x\in X^\beth$, we have $H(0, x)=x$ and $H(1, x)=p(x)$.
        \item For any $t\in [0, 1]$, the restriction of $H(t, \cdot)$ to $\Sigma(X)$ is the canonical injection of $\Sigma(X)$ into $X^\beth$. Therefore, $H$ defines a strong deformation retraction of $X^\beth$ onto $\Sigma(X)$.
    \end{enumerate}
\end{Prop}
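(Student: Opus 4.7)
The plan is to verify items (1)--(3) in order, after which the strong deformation retraction conclusion is immediate from their combination. Throughout, I would exploit the fact that by construction $H(t,x) = \mu(g_{\mathcal{H}(x)}(t), x)$ factors through the action morphism $\mu\colon T^\beth \times X^\beth \to X^\beth$, which is continuous as part of the datum of a $k$-analytic group action; the content of the statement therefore lies in understanding the relative Shilov point $g_{\mathcal{H}(x)}(t)$ as $(t,x)$ varies.

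For (1), I would reduce continuity of $H$ to continuity of the assignment $(t,x)\mapsto (g_{\mathcal{H}(x)}(t), x)\in T^\beth\times X^\beth$. The key input is functoriality of Shilov points under non-Archimedean extensions (the second bullet of the definition at the start of the section): the family of affinoid subgroups $G_{\mathcal{H}(x)}(t)$ is cut out by the explicit inequalities $|\chi_m - 1|\leq t$, so its Shilov point depends continuously on $t$, and the base-change compatibility gives continuity as $x$ varies. For (2), at $t=0$ the inequalities force $\chi_m = 1$ for every $m\in M$, so $G_{\mathcal{H}(x)}(0)$ is the identity section and $H(0,x)=\mu(e,x)=x$. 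At $t=1$ the inequalities $|\chi_m - 1|\leq 1$ are automatic on $T^\beth\widehat{\otimes}_k\mathcal{H}(x)$ because $|\chi_m|\leq 1$ there by definition of the functor $(\cdot)^\beth$, so $G_{\mathcal{H}(x)}(1) = T^\beth\widehat{\otimes}_k\mathcal{H}(x)$; its unique Shilov point is precisely the one used to define $p$, whence $H(1,x) = p(x)$.

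The main obstacle is (3), that $H(t,\cdot)|_{\Sigma(X)}$ is the canonical injection. I would first handle the generic stratum: for $x\in \Sigma_0(X) = \Sigma(X)\cap\rho^{-1}(X_0)$, the intrinsic identification of $\Sigma_0(X)$ with the support of the fan inside $X^\beth$ (recalled in the remark preceding the proposition) lets one write $x$ concretely as a monomial semi-norm on $k[M]$, and a direct computation shows that convolving such a monomial semi-norm with any point $y\in T^\beth\widehat{\otimes}_k\mathcal{H}(x)$ satisfying $|\chi_m(y)-1|\leq t\leq 1$ leaves $x$ unchanged, because the multiplicativity forces $|\chi_m(y\cdot x)| = |\chi_m(x)|$. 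Taking Shilov points of this identity over $G_{\mathcal{H}(x)}(t)$ yields $\mu(g_{\mathcal{H}(x)}(t),x)=x$. The general case then follows from continuity established in (1), using that $\Sigma(X)$ is the closure of $\Sigma_0(X)$ inside $X^\beth$.

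Finally, combining (1)--(3) gives a continuous homotopy with $H(0,\cdot)=\mathrm{id}_{X^\beth}$, $H(1,\cdot)=p$ landing in $\Sigma(X)$, and $H(t,\cdot)$ restricting to the identity on $\Sigma(X)$ for all $t$, which are exactly the axioms of a strong deformation retraction of $X^\beth$ onto $\Sigma(X)$. I expect the extension from $\Sigma_0(X)$ to all of $\Sigma(X)$ and the careful bookkeeping of the tropical/monomial computation at toric boundary strata to be where the bulk of the technical work lies, but no genuinely new ingredient beyond the functoriality of Shilov points and the explicit description of the fan realization is needed.
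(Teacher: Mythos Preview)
The paper does not give its own proof of this proposition: the entire argument is the single line ``See \cite[Theorem 2.17]{THU}'', since Section~\ref{Sec3} is an expository summary of Thuillier's work. So there is no in-house proof to compare against; what you have written is essentially a sketch of Thuillier's own argument, and your overall architecture---reduce continuity to the variation of the Shilov point $g_{\mathcal{H}(x)}(t)$, read off the endpoint values from the explicit description of $G_K(0)$ and $G_K(1)$, and handle (3) first on $\Sigma_0(X)$ via the fan realization and then pass to the closure---is the right one.

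One point in your sketch of (3) deserves care. You argue that for $x\in\Sigma_0(X)$ and any $y\in G_{\mathcal{H}(x)}(t)$ the identity $|\chi_m(y\cdot x)|=|\chi_m(x)|$ forces $\mu(y,x)=x$. Equality of character values alone is not enough: it pins down the image only if you already know that $\mu(y,x)$ is again a monomial (Gauss-type) point, i.e.\ lies in $\Sigma_0(X)$. In Thuillier's treatment this is exactly where the special role of the \emph{Shilov} point $g_{\mathcal{H}(x)}(t)$ enters---one checks directly that acting by the Shilov boundary of $G_{\mathcal{H}(x)}(t)$ preserves the Gauss-norm form, whereas acting by an arbitrary $y$ in that polydisc need not. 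So the phrase ``Taking Shilov points of this identity'' hides the actual content; you should instead verify that the seminorm $\mu(g_{\mathcal{H}(x)}(t),x)$ itself has the maximal-monomial form characterizing points of $\Sigma_0(X)$. Once that is in place, your density argument to pass from $\Sigma_0(X)$ to $\Sigma(X)$ via continuity is correct.
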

\begin{proof}
     See \cite[Theorem 2.17]{THU}.
\end{proof}

\subsection{General Case}

In this subsection, we generalize results for $k$-toric varieties to a special class of morphisms called toroidal embeddings. 
\begin{Def}
\begin{itemize}
   \item Let $X$ be a normal scheme over $k$. An open dense immersion $X_0\hookrightarrow X$ is called a simple toroidal embedding if any point $x\in X$ has an open neighborhood $U$ with an \'etale morphism $\gamma$ to a toric $k$-variety $Z$ such that $X_0\cap U$ is the inverse image of the open orbit $Z_0$ of $Z$.
    \item An open dense immersion $X_0\hookrightarrow X$ to a $k$-scheme $X$ is called a toroidal embedding if for every point $x\in X$ there exists a neighborhood $x\in U$ in $X$ and a surjective \'etale morphism $\delta\colon V\to U$ such that the open immersion $\delta^{-1}(X_0)\hookrightarrow V$ is a simple toroidal embedding.
\end{itemize}
\end{Def}
In this subsection, we will work only with simple toroidal embeddings $X_0\hookrightarrow X$.
\begin{Prop}
    For any simple toroidal embedding $X_0\hookrightarrow X$, there exists a unique continuous map $p_X\colon X^\beth \to X^\beth$ such that for all \'etale charts $\gamma\colon U\to Z$, we have $p_X(U^\beth)\subset U^\beth$ and the diagram
   \[
        \xymatrix{ U^\beth\ar[d]_{\gamma}\ar[r]^-{p_X} & U^\beth \ar[d]^\gamma \\ Z^\beth\ar[r]_-{p_Z} & Z^\beth}
    \]
    is commutative. Here, $p_Z$ is the endomorphism of $Z^\beth$ introduced in the previous subsection on toric varieties. 

    Moreover, as in the toric case, we have $p_X^2=p_X$.
\end{Prop}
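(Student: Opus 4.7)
The strategy is to construct $p_X$ locally on each open $U \subset X$ admitting an étale chart $\gamma \colon U \to Z$ to a toric $k$-variety, and then glue via a uniqueness argument; the idempotence $p_X^2 = p_X$ will then drop out formally from $p_Z^2 = p_Z$.

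For existence on a single chart $\gamma \colon U \to Z$, given $x \in U^\beth$ I would consider the homotopy path $\sigma_x \colon [0,1] \to Z^\beth$ defined by $\sigma_x(t) = H_Z(t, \gamma^\beth(x))$, which runs from $\gamma^\beth(x)$ to $p_Z(\gamma^\beth(x))$ by the strong deformation retraction constructed in the previous subsection. Since the analytic map $\gamma^\beth \colon U^\beth \to Z^\beth$ is étale, hence a local homeomorphism on the underlying topological spaces, one expects a unique continuous lift $\tilde{\sigma}_x \colon [0,1] \to U^\beth$ of $\sigma_x$ with $\tilde{\sigma}_x(0) = x$. Setting $p_U(x) := \tilde{\sigma}_x(1)$ yields a continuous map with $\gamma^\beth \circ p_U = p_Z \circ \gamma^\beth$ by construction, and continuous dependence of lifts on initial data gives continuity of $p_U$. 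This same uniqueness of lifts along $\gamma^\beth$ shows that any two maps satisfying the stated commutativity must coincide on $U^\beth$, yielding both the global uniqueness of $p_X$ and the compatibility of the local constructions $p_{U_i}$ on overlaps $(U_1 \cap U_2)^\beth$, so that they glue to a well-defined $p_X \colon X^\beth \to X^\beth$.

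Finally, $p_X^2 = p_X$ reduces to the observation that both sides satisfy $\gamma^\beth \circ (-) = p_Z \circ \gamma^\beth$ on every chart, using $p_Z^2 = p_Z$ together with the intertwining $\gamma^\beth \circ p_X = p_Z \circ \gamma^\beth$, so they coincide by uniqueness. The main obstacle is the analytic étale path-lifting used in the existence step: while étale morphisms of $k$-analytic spaces are local isomorphisms in the G-topology, they are not covering maps in the topological sense, so global lifts of continuous paths do not exist in general. What makes the argument work here is the toroidal structure, which keeps the trajectory $\sigma_x$ within a region of $Z^\beth$ where $\gamma^\beth$ is especially well-behaved: the compatibility $X_0 \cap U = \gamma^{-1}(Z_0)$ combined with the unique lifting of the Shilov-point construction on $T^\beth \widehat{\otimes}_k \mathcal{H}(\gamma(x))$ through the étale base change $U \times_Z (T^\beth \widehat{\otimes}_k \mathcal{H}(\gamma(x))) \to T^\beth \widehat{\otimes}_k \mathcal{H}(\gamma(x))$ forces local sections of $\gamma^\beth$ to match along $\sigma_x$, yielding the global lift. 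Making this controlled lifting precise is where Thuillier's machinery of étale charts for toroidal embeddings enters in earnest.
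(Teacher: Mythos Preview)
The paper itself does not prove this statement; it simply cites \cite[Corollary 3.13]{THU}. However, the surrounding exposition in the paper (Lemma~\ref{formal_extension}, Definition~\ref{local_def}, Proposition~\ref{local}) makes Thuillier's actual method visible, and it is \emph{not} the one you outline.

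Your approach constructs $p_U(x)$ by topologically lifting the path $t \mapsto H_Z(t,\gamma^\beth(x))$ through the \'etale map $\gamma^\beth$. As you yourself flag, this is the weak point: \'etale morphisms of $k$-analytic spaces are not covering maps, and there is no general path-lifting principle available. Your attempt to patch this in the last paragraph gestures at the right objects but still frames the argument as producing a continuous lift of a topological path, which is not how the construction goes.

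Thuillier avoids path-lifting entirely. The key input is Lemma~\ref{formal_extension}: for any \'etale $\gamma\colon U \to Z$, the action of the \emph{formal} completion $\widehat{T}_1$ on $Z$ lifts uniquely to an action $m_\gamma\colon \widehat{T}_1 \times U \to U$. Passing to $\beth$-spaces, the $k$-analytic group $\mathfrak{T}^\beth = \widehat{T}_1^{\,\beth}$ acts on $U^\beth$, and one sets $p_U(x)$ directly as the image of the appropriate Shilov point under $m_{\gamma,x}$ (Definition~\ref{local_def} with $t=1$). Uniqueness of the lifted formal action then yields both the independence of $p_U$ from the choice of chart (Proposition~\ref{local}(4)) and the gluing. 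So the lift happens at the level of the \emph{algebraic} group action through an \'etale morphism of formal schemes, where it is automatic, rather than at the level of continuous paths, where it is not.

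Your uniqueness and idempotence arguments are fine once $p_X$ exists; the issue is purely in the existence step. Replace the path-lifting with the formal-action lifting and the sketch becomes correct.
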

\begin{proof}
    See \cite[Corollary 3.13]{THU}.
\end{proof}
\begin{Rem}
    \begin{enumerate}
        \item  We define the skeleton of $X$ to be $\Sigma(X)=p_X(X^\beth)$. This is a closed subspace of $X^\beth$, and $p_X$ defines a retraction of $X^\beth$ onto $\Sigma(X)$.
        \item Let $\Sigma_0(X)=\Sigma(X)\cap \rho^{-1}(X_0)$. Then $\Sigma_0(X)$ has a natural rational conical polyhedral structure, and $\Sigma(X)$ is its closure in $X^\beth$. For more precise statements, see  \cite[Subsection 3.1.3]{THU}.
        \item For our purposes, it suffices to note that if $X$ is irreducible and smooth, then $X\hookrightarrow X$ is a simple toroidal embedding and $\Sigma(X)=o$, where $o$ is the distinguished point corresponding to the trivial absolute valuation on $k(X)$.
    \end{enumerate}
\end{Rem}

\begin{Def}
    Let $G$ be an algebraic group over $k$. We denote by $\widehat{G}_1$ the formal completion of $G$ at the unit point $1$.
\end{Def}

The main technical tool in constructing a homotopy similar to that of the previous subsection is the following result:

\begin{Lemma}\label{formal_extension}
    Let $G$ be a $k$-group with an action $m\colon G\times Z\to Z$ on a $k$-scheme $Z$. For any \'etale morphism $\gamma \colon U\to Z$, there exists a unique morphism of formal $k$-schemes $m_\gamma:\widehat{G}_1\times U \to U$ defining the action of $\widehat{G}_1$ on $U$ such that the  diagram 
    \[
    \xymatrix{\widehat{G}_1\times U \ar[r]^-{m_\gamma}\ar[d]_{id\times \gamma} & U\ar[d]^\gamma \\
    \widehat{G}_1\times Z \ar[r]_-m & Z}
    \]
    commutes. The bottom arrow is obtained by precomposing the action map $m$ with the canonical morphism $i\times id_Z\colon\widehat{G}_1\times Z\to G\times Z$. 
\end{Lemma}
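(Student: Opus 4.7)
My plan is to express $\widehat{G}_1$ as the filtered colimit of its finite infinitesimal thickenings of the identity and produce $m_\gamma$ level by level using the formal étaleness of $\gamma$. Write $\widehat{G}_1 = \colim_n G_n$, where $G_n \hookrightarrow G$ denotes the $n$-th order infinitesimal neighborhood of $1 \in G$, and let $j_n \colon G_n \hookrightarrow G$ be the inclusion. For each $n$, the closed immersion $\{1\} \times U \hookrightarrow G_n \times U$ is a nilpotent thickening, and the lemma, restricted to $G_n$, asks for a morphism $m_{\gamma,n} \colon G_n \times U \to U$ lifting the composite $m \circ (j_n \times \gamma) \colon G_n \times U \to Z$ along $\gamma$. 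It suffices to produce compatible such lifts in $n$ and then pass to the colimit, using $\Hom(\widehat{G}_1 \times U, U) = \varprojlim_n \Hom(G_n \times U, U)$.

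For existence at each finite level, observe that the composite $m \circ (j_n \times \gamma)$ restricted to $\{1\} \times U$ is simply $\gamma$, and the identity $\text{id}_U$ is an obvious lift of $\gamma$ along $\gamma$. Since $\gamma$ is étale and hence formally étale, any nilpotent thickening $Y_0 \hookrightarrow Y$ equipped with a morphism $Y \to Z$ and a compatible lift $Y_0 \to U$ admits a unique extension to a lift $Y \to U$. Applying this to $\{1\} \times U \hookrightarrow G_n \times U$ produces a unique $m_{\gamma,n}$ whose restriction to $\{1\} \times U$ equals $\text{id}_U$. The unit condition $m_{\gamma,n}|_{\{1\} \times U} = \text{id}_U$ is precisely what will force $m_\gamma$ to be an action, and it is also what pins down the lift among all possible lifts of $\gamma$ through $\gamma$ (of which there may be several, since $\gamma$ is étale but not necessarily a monomorphism). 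Compatibility in $n$ is automatic by uniqueness: $m_{\gamma,n+1}|_{G_n \times U}$ is also such a lift and hence equals $m_{\gamma,n}$. Taking the colimit yields the desired morphism of formal $k$-schemes $m_\gamma \colon \widehat{G}_1 \times U \to U$.

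Finally, to verify that $m_\gamma$ genuinely defines an action, I would again invoke the uniqueness of formal-étale lifts. The two morphisms $m_\gamma \circ (\mu \times \text{id}_U)$ and $m_\gamma \circ (\text{id}_{\widehat{G}_1} \times m_\gamma)$ from $\widehat{G}_1 \times \widehat{G}_1 \times U$ to $U$ both lift the single morphism $m \circ (\mu \times \gamma) \colon \widehat{G}_1 \times \widehat{G}_1 \times U \to Z$ along $\gamma$, using the associativity of the ambient $G$-action on $Z$, and both restrict to the projection onto $U$ at $\{(1,1)\} \times U$; applied at each bi-filtered level $G_m \times G_n \times U$, the formal-étale lifting property forces them to agree, so $m_\gamma$ is associative, and unit is built in by construction. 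Uniqueness of $m_\gamma$ itself among morphisms satisfying the diagram and the unit axiom follows by the same formal-étaleness argument at each level. The main subtlety I anticipate is purely bookkeeping: one must carefully translate between the language of formal schemes (as used in the paper) and the pro-system $\{G_n\}$, and check that the level-wise unique lifts really do assemble into a morphism of formal $k$-schemes rather than merely a compatible system of scheme morphisms; this is resolved once one identifies $\widehat{G}_1 \times U$ with $\colim_n (G_n \times U)$ in the category of formal schemes.
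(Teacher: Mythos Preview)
Your argument is correct and is precisely the standard one: the paper itself does not supply a proof but simply cites \cite[Lemma 3.19]{THU}, and Thuillier's proof there is exactly the formal-\'etaleness lifting argument you outline, building $m_\gamma$ level by level on the infinitesimal thickenings $G_n$ of $1$ and invoking uniqueness of lifts to assemble and to verify the action axioms. Your identification of the one genuine subtlety---that uniqueness of the lift requires pinning down the restriction to $\{1\}\times U$ as $\mathrm{id}_U$, i.e.\ the unit axiom---is also on the mark.
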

\begin{proof}
    See \cite[Lemma 3.19]{THU}.
\end{proof}

Now let $\gamma\colon U\to Z$ be an \'etale chart on a toric variety $Z$ and let $T$ be the $k$-torus acting on $Z$ with a group of characters $M$. If we denote by $\mathfrak{T}$ the formal completion $\widehat{T}_1$, then
\[
\mathfrak{T}^\beth =\{x\in T^\beth \colon |\chi(m)-1|(x)<1, m\in M\}
\]
is a $k$-analytic subgroup of $T^\beth$. It is the union of the subgroups $G(t)$ for $t\in [0, 1)$ defined above.

Using Lemma~\ref{formal_extension}, we obtain the morphism of formal $k$-schemes $m_\gamma\colon \mathfrak{T}\times U\to U$. This induces the map between $k$-analytic spaces $m_\gamma\colon \mathfrak{T}^\beth\times U^\beth \to U^\beth$ which gives an action of the $k$-analytic group $\mathfrak{T}^\beth$ on $U^\beth$.

As usual, any point $x\in U^\beth$ defines a tautological $\mathcal{H}(x)$-point $\underline{x}\colon \mathcal{M}(\mathcal{H}(x))\to U^\beth$. This allows us to define a canonical morphism
\[
m_{\gamma, x}\colon \mathfrak{T}^\beth_{\mathcal{H}(x)}=\mathfrak{T}^\beth\times\mathcal{M}(\mathcal{H}(x))\to U^\beth
\]
by composing the homomorphism $id_{\mathfrak{T}^\beth}\times\underline{x}\colon \mathfrak{T}^\beth\times \mathcal{M}(\mathcal{H}(x))\to \mathfrak{T}^\beth\times U^\beth$ with $m_\gamma$.

This construction leads to the following definition:
\begin{Def}\label{local_def}
    Let $U\to Z$ be any \'etale chart on a toric variety $Z$. We define a map $H_\gamma\colon [0, 1]\times U^\beth \to U^\beth$ as follows:
    \begin{itemize}
        \item For any $t\in [0, 1)$, let $H_\gamma(t, x)$ be the image of the Shilov point $g_{\mathcal{H}(x)}(t)$ of the $\mathcal{H}(x)$-affinoid space $G_{\mathcal{H}(x)}(t)$ via  $m_{\gamma, x}$.
        \item Let $H_\gamma(1, x)=p_X(x)$.
    \end{itemize}
\end{Def}

\begin{Prop}\label{local}
For any \'etale chart $\gamma: U\to Z$ over $X$, the map 
\[
H_\gamma\colon [0, 1]\times U^\beth \to U^\beth 
\]
satisfies the following properties:
\begin{enumerate}
    \item The map $H_\gamma$ is continuous.
    \item For any point $u\in U^\beth$, we have $H_\gamma(0, u)=u$ and $H_\gamma(1, u)=p_U(u)$.
    \item  For all $t\in[0, 1]$, the map $H_\gamma(t, \cdot)$ restricts to the identity on $\Sigma(U)$.
    \item For any two \'etale charts $\gamma \colon U\to Z$ and $\gamma'\colon U\to Z'$, we have that $H_\gamma=H_{\gamma'}$.
\end{enumerate}
\end{Prop}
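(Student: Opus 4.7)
The overall plan is to reduce all four properties to the corresponding statements in the toric case (the proposition on $H\colon [0,1]\times Z^\beth\to Z^\beth$) by systematically exploiting the compatibility diagram of Lemma~\ref{formal_extension}. Applying the $\beth$-functor to that diagram and inserting the tautological point $\underline{x}$ yields the basic identity
\[
\gamma\circ H_\gamma(t,x)=H(t,\gamma(x))\qquad\text{for all } t\in[0,1],\ x\in U^\beth,
\]
where at $t=1$ one uses $\gamma\circ p_U=p_Z\circ\gamma$ from the previous proposition.

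For continuity (1), on $[0,1)\times U^\beth$ the map $H_\gamma$ is a composition of continuous pieces: the family $t\mapsto g_K(t)$ is continuous in $t$ jointly with the tautological point $\underline{x}$, and $m_\gamma^{\mathrm{an}}$ is continuous as an analytic morphism. Continuity at $t=1$ follows from $\mathfrak{T}^\beth=\bigcup_{t<1}G(t)$: the Shilov point $g_K(t)$ converges, as $t\to 1^-$, to the unique Shilov point of $\mathfrak{T}^\beth\widehat\otimes K$, and its image under $m_{\gamma,x}$ is exactly $p_U(x)$ by the local description of $p_U$ via $m_\gamma$. For the endpoints (2), at $t=0$ the defining inequalities force $G_K(0)=\{1_T\}$, so $g_K(0)=1_T$ and $m_{\gamma,x}(1_T)=\underline{x}$, giving $H_\gamma(0,x)=x$; the case $t=1$ is by definition.

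For the skeleton condition (3), if $x\in\Sigma(U)$ then $p_U(x)=x$, hence $\gamma(x)\in\Sigma(Z)$ by compatibility of $p$ with $\gamma$. In the toric case one has $H(t,\gamma(x))=\gamma(x)$ for every $t$, so the boxed identity yields $\gamma(H_\gamma(t,x))=\gamma(x)$. To promote this to the equality $H_\gamma(t,x)=x$ I would argue as follows: the path $t\mapsto H_\gamma(t,x)$ is continuous, starts at $x$, and lies in the fiber of $\gamma$ over $\gamma(x)$. Since $\gamma$ is étale, locally around $x$ the analytification $\gamma^{\mathrm{an}}$ is a local isomorphism of Berkovich spaces, so each point of the fiber is topologically isolated from $x$ in a neighborhood; continuity from the initial value $x$ forces the path to remain at $x$ throughout.

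The main obstacle is the chart-independence (4). The plan is to invoke the uniqueness clause of Lemma~\ref{formal_extension}. Given two charts $\gamma\colon U\to Z$ and $\gamma'\colon U\to Z'$, one reduces to a local comparison near a point $x\in U$. The formal actions $m_\gamma$ and $m_{\gamma'}$ are both the \emph{unique} extensions of their defining toric actions, and they must therefore agree on the common formal data extracted from $U$: the stratification of $U$ induced by the toroidal divisor $U\setminus U_0$ is intrinsic, and the formal tori $\widehat{T}_1$ and $\widehat{T'}_1$ act through this intrinsic data. Concretely, I would show that the Shilov point $g_K(t)$ is characterized purely in terms of the moduli of the characters defining the toroidal structure, so that its image under $m_{\gamma,x}$ depends only on $U_0\subset U$ and not on the chart. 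The hardest point here is the compatible identification of the two different formal tori; I expect to handle this by passing to a common refinement on which both charts factor, using that étale morphisms between toric varieties preserve the character lattice up to canonical identification, and then conclude $H_\gamma(t,x)=H_{\gamma'}(t,x)$ for all $(t,x)$.
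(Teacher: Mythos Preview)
The paper does not prove this proposition at all: its proof reads in full ``See \cite[Proposition 3.23 and Proposition 3.25]{THU}.'' This is a background section summarizing Thuillier's results, so there is no argument in the paper to compare your sketch against beyond the fact that you are, in outline, retracing Thuillier's own strategy of reducing to the toric case via the commutative square coming from Lemma~\ref{formal_extension}.

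That said, your sketch has real gaps. For (1), the continuity at $t=1$ is not settled by the sentence you wrote: the open analytic group $\mathfrak{T}^\beth\widehat\otimes K$ is not affinoid, so speaking of ``the unique Shilov point'' of it requires justification, and you have not explained why its image under $m_{\gamma,x}$ equals $p_U(x)$ (recall $p_U$ was defined only by the compatibility $\gamma\circ p_U=p_Z\circ\gamma$, not via $m_\gamma$). Knowing that $\gamma(H_\gamma(t,x))\to\gamma(p_U(x))$ does not by itself give $H_\gamma(t,x)\to p_U(x)$. Your argument for (3) via discreteness of \'etale fibers is fine once (1) is in hand.

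The largest gap is (4). You correctly identify the crux---matching the two formal torus actions---but ``passing to a common refinement on which both charts factor'' and ``\'etale morphisms between toric varieties preserve the character lattice up to canonical identification'' are assertions, not arguments. Two \'etale charts $\gamma,\gamma'$ land in toric varieties whose tori may have different ranks, and there is no a priori morphism between them; what Thuillier actually does (his Proposition~3.25) is compare both actions to an \emph{intrinsic} formal action attached to the log structure of the toroidal embedding $U_0\hookrightarrow U$, showing that each $m_\gamma$ factors through it. Your sketch does not produce this intrinsic object, so as written (4) is not proved.
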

\begin{proof}
     See \cite[Proposition 3.23 and Proposition 3.25]{THU}.
\end{proof}
Now we can state the main theorem characterizing the homotopy type of $X^\beth$.
\begin{Th}\label{MainHomotopy}
    There exist a unique map 
    \[
    H\colon [0, 1] \times X^\beth \to X^\beth
    \]
    such that for any \'etale chart $\gamma\colon U\to Z$, the map $H$ sends $[0, 1]\times U^\beth$ to $U^\beth$ and its restriction to $[0,1]\times U^\beth$ coincides with $H_\gamma$ from Definition~\ref{local_def}. Moreover, $H$ satisfies the following conditions:
    \begin{itemize}
        \item  For all $x\in X^\beth$, $H(0, x)=x$.
        \item  For all $x\in X^\beth$, $H(1, x)=p_X(x)$.
        \item For all $x\in \Sigma(X)$ and all $t\in [0, 1]$, $H(t, x)=x$, meaning that $H$ is a deformation retraction of $X^\beth$ onto $\Sigma(X)$.
    \end{itemize}
\end{Th}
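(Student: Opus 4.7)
The plan is to construct $H$ by gluing the locally defined homotopies $H_\gamma$ from Definition~\ref{local_def}. Since $X_0 \hookrightarrow X$ is a simple toroidal embedding, every point of $X$ admits an open neighborhood $U$ equipped with an \'etale chart $\gamma \colon U \to Z$ to some toric $k$-variety $Z$, so the family $\{U^\beth\}$ covers $X^\beth$. Uniqueness is then immediate: any map $H$ satisfying the prescribed restriction $H|_{[0,1]\times U^\beth} = H_\gamma$ for every chart is determined on a cover of $[0,1] \times X^\beth$ and hence on the whole product.

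For existence, the core task is to check compatibility of the $H_\gamma$ on overlaps. Given two \'etale charts $\gamma_1\colon U_1 \to Z_1$ and $\gamma_2\colon U_2 \to Z_2$, the restrictions $\gamma_i|_{U_1 \cap U_2}$ remain \'etale charts on the common open subscheme $U_1 \cap U_2$, and Proposition~\ref{local}(4) asserts that two \'etale charts with the same source produce the same homotopy; hence $H_{\gamma_1|_{U_1\cap U_2}} = H_{\gamma_2|_{U_1\cap U_2}}$. It then suffices to show that $H_\gamma$ is compatible with passage to an open subset, namely that $H_{\gamma|_V}$ coincides with the restriction of $H_\gamma$ to $[0,1] \times V^\beth$ for any open $V \subseteq U$. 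At $t=1$ both values equal $p_X(x)$, which is defined globally on $X^\beth$ and intertwines with charts by the construction of $p_X$ recalled earlier. For $t \in [0,1)$, the value $H_\gamma(t,x)$ is the image of the Shilov point $g_{\mathcal{H}(x)}(t)$ under $m_{\gamma,x}$; the uniqueness clause of Lemma~\ref{formal_extension} forces the formal action $m_\gamma$ to restrict to $m_{\gamma|_V}$ over $V$, and the construction of $m_{\gamma,x}$ therefore depends only on the germ of $\gamma$ near $x$. This compatibility check, combined with Proposition~\ref{local}(4), is the main obstacle, but it is a routine unwinding of the functoriality of formal completion once the local pieces are in hand.

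With well-definedness on overlaps established, set $H(t,x) = H_\gamma(t,x)$ for any chart $\gamma \colon U \to Z$ with $x \in U^\beth$. The remaining properties are inherited pointwise from their local counterparts: $H(0,x) = x$ and $H(1,x) = p_X(x)$ follow from Proposition~\ref{local}(2), using that $p_X|_{U^\beth} = p_U$; the identity $H(t,x) = x$ for $x \in \Sigma(X)$ follows from Proposition~\ref{local}(3), since $\Sigma(X) \cap U^\beth = \Sigma(U)$. Continuity of $H$ on $[0,1] \times X^\beth$ is a local property and follows from Proposition~\ref{local}(1) applied chart-by-chart, yielding the desired strong deformation retraction of $X^\beth$ onto $\Sigma(X)$.
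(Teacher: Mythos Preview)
The paper does not give its own proof of this theorem; it simply cites \cite[Theorem 3.26]{THU}, as Section~\ref{Sec3} is a summary of known results due to Berkovich and Thuillier. Your sketch is correct and reproduces the gluing argument Thuillier carries out there: independence of the chart (Proposition~\ref{local}(4)) together with compatibility under restriction to open subschemes lets the local homotopies $H_\gamma$ patch, and the three bulleted properties are inherited from Proposition~\ref{local}(2)--(3).
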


\begin{proof}
    See \cite[Theorem 3.26]{THU}.
\end{proof}

\begin{Rem}\label{contracte}
An immediate consequence of Theorem~\ref{MainHomotopy} is that the analytification a scheme, which is isomorphic to an open subscheme of a proper smooth scheme, is contractible. In particular, when $char(k)=0$ and $X$ is a smooth separated scheme over $k$, then by Hironaka's result, $X^{an}$ is contractible. 
\end{Rem}
\begin{Lemma}\label{commutativity}
    Let $U_0\hookrightarrow U$ be a simple toroidal embedding and let $f\colon V\to U$ be an \'etale morphism. Then the open immersion $f^{-1}(U_0)\hookrightarrow V$ is a simple toroidal embedding, $\Sigma(V)=f^{-1}(\Sigma(U))$, and the natural diagram 
    \[
        \xymatrix{ [0, 1]\times V^\beth\ar[d]_{id \times f}\ar[r]^-{H_V} & V^\beth \ar[d]^f \\ [0, 1]\times U^\beth\ar[r]_-{H_U} & U^\beth}
    \]
    is commutative.
\end{Lemma}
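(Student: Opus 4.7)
The plan is to handle the three assertions in turn, using that all three are local on $V$ for the étale topology and that $H_V$, $H_U$ are defined chart-by-chart via Definition~\ref{local_def}.

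\textbf{Step 1 (the induced embedding is simple toroidal).} Pick any point $v\in V$ and set $u=f(v)$. Choose an open neighborhood $U'\ni u$ in $U$ and an étale morphism $\gamma\colon U'\to Z$ to a toric $k$-variety with $U_0\cap U'=\gamma^{-1}(Z_0)$. Then $f^{-1}(U')$ is an open neighborhood of $v$ in $V$, the composition $\gamma\circ f\vert_{f^{-1}(U')}\colon f^{-1}(U')\to Z$ is étale (composition of étales), and $f^{-1}(U_0)\cap f^{-1}(U')=(\gamma\circ f)^{-1}(Z_0)$. Hence $f^{-1}(U_0)\hookrightarrow V$ is a simple toroidal embedding.

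\textbf{Step 2 (commutativity of the homotopy square).} Work on an étale chart $\gamma\colon U'\to Z$ of $U$ with acting $k$-torus $T$; then $\gamma'=\gamma\circ f\colon f^{-1}(U')\to Z$ is an étale chart of $V$ by Step 1. By Lemma~\ref{formal_extension}, the action $m\colon T\times Z\to Z$ has unique formal extensions $m_\gamma\colon \widehat{T}_1\times U'\to U'$ and $m_{\gamma'}\colon \widehat{T}_1\times f^{-1}(U')\to f^{-1}(U')$ compatible with $\gamma$ and $\gamma'$ respectively. Viewing $U'$ as a $\widehat{T}_1$-scheme via $m_\gamma$ and applying the uniqueness part of Lemma~\ref{formal_extension} to the étale morphism $f|_{f^{-1}(U')}\to U'$, we conclude $f\circ m_{\gamma'}=m_\gamma\circ(\mathrm{id}\times f)$, i.e.\ $f$ is $\widehat{T}_1$-equivariant on $f^{-1}(U')$. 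Passing to analytic spaces and evaluating at the Shilov point $g_{\mathcal{H}(v)}(t)$ of $G_{\mathcal{H}(v)}(t)$, Definition~\ref{local_def} gives
\[
f^{\mathrm{an}}\bigl(H_{\gamma'}(t,v)\bigr)=H_\gamma\bigl(t,f^{\mathrm{an}}(v)\bigr)\qquad(t\in[0,1)).
\]
Continuity of $H$ and of $f^{\mathrm{an}}$ extends this to $t=1$. Since by Theorem~\ref{MainHomotopy} the maps $H_V$ and $H_U$ coincide with $H_{\gamma'}$ on $f^{-1}(U')^\beth$ and with $H_\gamma$ on $(U')^\beth$, and since such charts cover $V$, the stated square commutes.

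\textbf{Step 3 (equality of skeleta).} Setting $t=1$ in Step~2 yields $f^\beth\circ p_V=p_U\circ f^\beth$, so $f^\beth(\Sigma(V))\subset\Sigma(U)$, i.e.\ $\Sigma(V)\subset f^{-1}(\Sigma(U))$. Conversely, let $v\in V^\beth$ with $f^\beth(v)\in\Sigma(U)$. Then for every $t\in[0,1]$,
\[
f^\beth\bigl(H_V(t,v)\bigr)=H_U\bigl(t,f^\beth(v)\bigr)=f^\beth(v),
\]
because $H_U$ fixes $\Sigma(U)$ pointwise. Thus the continuous path $t\mapsto H_V(t,v)$ from $v$ to $p_V(v)$ lies in the fiber of $f^\beth$ over $f^\beth(v)$. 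Since $f^\beth$ is étale, this fiber is discrete, and the path is constant; in particular $p_V(v)=v$, so $v\in\Sigma(V)$.

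The main obstacle is the bookkeeping in Step~2: one must identify the action $m_{\gamma'}$ built from the chart $\gamma'=\gamma\circ f$ with the action obtained by restricting $m_\gamma$ along the étale $f$, and this hinges on the uniqueness clause of Lemma~\ref{formal_extension}. The rest is formal: Step~1 is immediate, and Step~3 is a direct consequence of Step~2 together with the discreteness of fibers of étale analytic maps.
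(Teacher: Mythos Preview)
The paper does not supply its own proof here; it simply cites \cite[Lemma~3.28]{THU}. Your argument is correct and is essentially the reconstruction of Thuillier's proof: compose a chart $\gamma$ of $U$ with $f$ to obtain a chart $\gamma'$ of $V$, use the uniqueness in Lemma~\ref{formal_extension} (really the infinitesimal lifting property of \'etale morphisms along the thickenings $\{1\}\hookrightarrow\widehat{T}_1$) to see that $f$ intertwines the formal torus actions, pass to $\beth$-spaces and use compatibility of Shilov points with base change $\mathcal{H}(f(v))\hookrightarrow\mathcal{H}(v)$ to get the commutativity for $t<1$, and close up by continuity; then read off $\Sigma(V)=f^{-1}(\Sigma(U))$ from the case $t=1$ plus discreteness of fibers of the \'etale map $f^\beth$.

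Two cosmetic points. In Step~2 you invoke Lemma~\ref{formal_extension} with $U'$ playing the role of the base, but that lemma is literally stated for an algebraic group acting on a scheme, not for the formal action $m_\gamma$ of $\widehat{T}_1$ on $U'$; what you are actually using is the same infinitesimal lifting argument that proves the lemma, so nothing is wrong, but it would be cleaner to say so. Also, you write $f^{\mathrm{an}}$ where $f^\beth$ is meant, since the homotopies $H_U,H_V$ live on the $\beth$-spaces.
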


\begin{proof}
     See \cite[Lemma 3.28]{THU}.
\end{proof}

We now give the following application of Lemma~\ref{commutativity}.
\begin{Lemma}\label{homotopy_quotient}
    Let $X$ be a smooth irreducible variety over $k$ with an action of a finite group $G$. Then the homotopy $H$ constructed in the Theorem~\ref{MainHomotopy} is $G$-equivariant. 
\end{Lemma}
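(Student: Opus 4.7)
The plan is to deduce $G$-equivariance directly from Lemma~\ref{commutativity} applied to each group element viewed as an étale self-morphism of $X$. The key observation is that for any $g\in G$, the action map $g\colon X\to X$ is an isomorphism of $k$-schemes, hence in particular étale.

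First I would set up the simple toroidal embedding structure. Since $X$ is smooth and irreducible, by the third item of the preceding Remark, the identity $X\hookrightarrow X$ is itself a simple toroidal embedding (with skeleton $\Sigma(X)=\{o\}$, or more relevantly for the general formalism, with the local étale charts to toric varieties provided by smoothness). Thus the homotopy $H=H_X\colon [0,1]\times X^\beth \to X^\beth$ of Theorem~\ref{MainHomotopy} is defined on $X$.

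Next, for each fixed $g\in G$, I apply Lemma~\ref{commutativity} with $U=V=X$, $U_0=X$, and $f=g$. The hypothesis $g^{-1}(X_0)=X_0$ is satisfied trivially since $X_0=X$, and $g$ is étale because it is an automorphism. The lemma then yields that $g^{-1}(X)\hookrightarrow X$ is again a simple toroidal embedding (vacuously), that $g^{-1}(\Sigma(X))=\Sigma(X)$, and most importantly that the square
\[
\xymatrix{[0,1]\times X^\beth \ar[r]^-{H_X} \ar[d]_{\mathrm{id}\times g^{\beth}} & X^\beth \ar[d]^{g^{\beth}} \\ [0,1]\times X^\beth \ar[r]_-{H_X} & X^\beth}
\]
commutes, where $g^\beth$ is the map on analytifications induced by $g$ via the functoriality recorded in Proposition~\ref{Compare}(3). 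This commutativity is precisely the identity $H(t,g\cdot x)=g\cdot H(t,x)$ for all $(t,x)\in[0,1]\times X^\beth$, so running over all $g\in G$ gives $G$-equivariance.

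I do not expect any real obstacle: the content of the lemma is entirely encapsulated by the fact that Lemma~\ref{commutativity} applies to \emph{every} étale morphism, and automorphisms are a special case. The only point worth stating explicitly (as a by-product) is that $\Sigma(X)\subset X^\beth$ is a $G$-invariant closed subspace, which will presumably be needed later for arguments involving the quotient $|X^{an}|/G$ referenced in the introduction.
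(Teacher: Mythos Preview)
Your proposal is correct and follows essentially the same approach as the paper: apply Lemma~\ref{commutativity} to each $g\in G$ viewed as an \'etale self-map of $X$, using that the identity $X\hookrightarrow X$ is a simple toroidal embedding because $X$ is smooth. The paper's proof is identical in content, just more terse.
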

\begin{proof}
    This follows from Lemma~\ref{commutativity}. Indeed, for any element $g\in G$, the map defined by it is \'etale, and since $X$ is smooth, the identity map $id\colon X\to X$ is a simple toroidal embedding. Therefore, the diagram
    \[
        \xymatrix{ [0, 1]\times X^\beth\ar[d]_{id \times g}\ar[r]^-{H} & X^\beth \ar[d]^g \\ [0, 1]\times X^\beth\ar[r]_-{H} & X^\beth}
    \]
    is commutative, which proves the assertion.
\end{proof}

\begin{Cor}\label{quotient}
If $X$ is a smooth irreducible variety over $k$ with an action of a finite group $G$, then the analytification of the quotient $ (X/G)^{an}\simeq X^{an}/G$ is contractible.
\end{Cor}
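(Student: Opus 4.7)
The claim splits into two pieces: the topological identification $(X/G)^{an}\simeq X^{an}/G$ and the contractibility of this space. For the first, assuming the standard hypothesis that $G$-orbits lie in affine opens (e.g.\ $X$ quasi-projective), the quotient $X/G$ exists as a $k$-variety and $X\to X/G$ is finite. Since Berkovich analytification is compatible with finite quotients of schemes, $(X/G)^{an}\cong X^{an}/G$ as topological spaces.

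To prove contractibility, the plan is to build a $G$-equivariant strong deformation retraction of $X^{an}$ onto a single point and then descend it. First I would apply $G$-equivariant Hironaka (available in characteristic zero) to obtain a $G$-equivariant open immersion $X\hookrightarrow\bar X$ into a smooth projective $k$-variety $\bar X$. Because $\bar X$ is smooth, the trivial embedding $\bar X\hookrightarrow\bar X$ is a simple toroidal embedding with $\Sigma(\bar X)=\{o\}$, where $o$ is the trivial valuation on $k(\bar X)=k(X)$; because $\bar X$ is proper, $\bar X^{\beth}=\bar X^{an}$ by Proposition~\ref{Compare}. Theorem~\ref{MainHomotopy} then supplies a strong deformation retraction $H\colon[0,1]\times\bar X^{an}\to\bar X^{an}$ onto $\{o\}$, and Lemma~\ref{homotopy_quotient} makes $H$ $G$-equivariant.

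The crux is to verify that $H$ preserves the open subspace $X^{an}\subset\bar X^{an}$. On each étale toroidal chart $\gamma\colon U\to Z$, the value $H_\gamma(t,x)$ is the image under $m_{\gamma,x}$ of the Shilov point of the analytic formal-torus disk $G_{\mathcal{H}(x)}(t)$ around the identity; one checks that $\rho_{\bar X}(H(t,x))$ is a Zariski generization of $\rho_{\bar X}(x)$ in $\bar X$. Openness of $X\subset\bar X$ then implies that generizations of points of $X$ remain in $X$, so $H$ preserves $X^{an}$. Since $o\in X^{an}$, the restriction $H|_{[0,1]\times X^{an}}$ is a $G$-equivariant deformation retraction of $X^{an}$ onto the single $G$-fixed point $o$, which descends to a deformation retraction of $X^{an}/G$ onto $[o]$, yielding contractibility.

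The main obstacle is the generization claim used to show that $H$ preserves $X^{an}$: one must trace through the construction of Section~3 and see that Thuillier's Shilov-point contraction is compatible with the Zariski specialization stratification under the locally ringed map $\rho_{\bar X}$. Once this is in place, the remaining pieces—$G$-equivariance from Lemma~\ref{homotopy_quotient}, descent of $H$ via the $G$-fixedness of $o$, and the identification $(X/G)^{an}\cong X^{an}/G$—are formal.
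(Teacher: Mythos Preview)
Your proposal is correct and follows essentially the same route as the paper, whose one-line proof simply cites Remark~\ref{contracte} and Lemma~\ref{homotopy_quotient}. You have unpacked what those citations entail: pass to a $G$-equivariant smooth compactification $X\hookrightarrow\bar X$, apply Lemma~\ref{homotopy_quotient} to $\bar X$ to obtain a $G$-equivariant contraction of $\bar X^{an}=\bar X^{\beth}$ onto $\{o\}$, check that the homotopy preserves the open $X^{an}\subset\bar X^{an}$, and descend. The ``generization'' step you single out as the main obstacle is precisely the content hidden inside Remark~\ref{contracte}; the paper never states it explicitly, so your identifying and sketching it is a genuine addition rather than a deviation. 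The only point worth flagging is that Lemma~\ref{homotopy_quotient} as written is stated for $X$ itself, but its proof (via Lemma~\ref{commutativity}) applies verbatim to $\bar X$ once you have the $G$-equivariant compactification, so your use of it is legitimate.
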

\begin{proof}
    This follows immediately from Remark~\ref{contracte} and Lemma~\ref{homotopy_quotient}
\end{proof}
\section{Singular Cohomology of analytification}\label{Sec4}
Let $X$ be a scheme over $k$. We endow $k$ with the trivial valuation and consider its analytification $X^{an}$ in the sense of Berkovich. Let $C^\bullet(|X^{an}|, \Z)$ be the singular cochain complex of the analytification of $X$ that computes its singular cohomology. Let us denote by $\mathcal{F}^\bullet$ the presheaf on $Sch_k$ which sends $X$ to the complex $C^\bullet(|X^{an}|, \Z)$. We begin with the following lemma:
\begin{Lemma}\label{Nisn-descent}
The presheaf $\mathcal{F}^\bullet$ corresponding to the singular cochain complex of analytification satisfies Nisnevich descent. In particular, every elementary Nisnevich distinguished square 
\[
\xymatrix{ W\ar[d]^q\ar[r]^j & V \ar[d]^p \\ U\ar[r]^i & X}
\]
induces a long exact sequence in singular cohomology of analytifications:
\[
\ldots\to H^n(|X^{an}|, \Z)\to
H^n(|U^{an}|, \Z)\oplus H^n(|V^{an}|, \Z)\to
H^n(|W^{an}|, \Z)\to\ldots.
\]
\end{Lemma}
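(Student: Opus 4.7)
The plan is to combine two observations: first, $\mathcal{F}^\bullet$ is straightforwardly a Zariski hypersheaf; and second, an elementary Nisnevich square is Zariski-locally on the base a Zariski pushout, so the Mayer-Vietoris property reduces to the classical topological one for open covers.

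First, since analytification sends open immersions of schemes to open immersions of Berkovich spaces and commutes with finite unions and intersections of Zariski opens, any Zariski open cover of $X$ induces a topological open cover of $|X^{an}|$. Classical Mayer-Vietoris for singular cohomology on open covers then implies that $\mathcal{F}^\bullet$ is a Zariski hypersheaf. Let $T$ denote the presheaf on Zariski opens $X'\subseteq X$ sending $X'$ to the total complex of the would-be Mayer-Vietoris sequence for the Nisnevich square restricted to $X'$; by the preceding, $T$ is itself a Zariski hypersheaf, so proving $T(X)\simeq 0$ reduces to showing $T(X')\simeq 0$ for all opens $X'$ in some basis closed under finite intersections.

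For each $v\in V-W$, the isomorphism $(V-W)\xrightarrow{\sim}(X-U)$ combined with the \'etaleness of $p$ forces the local ring map $\mathcal{O}_{X,p(v)}\to\mathcal{O}_{V,v}$ to be an isomorphism (an \'etale local map with trivial residue extension is an isomorphism by Nakayama). Hence there is a Zariski open $V_v\subseteq V$ with $v\in V_v$ such that $p|_{V_v}\colon V_v\xrightarrow{\sim}X_v:=p(V_v)$ is an isomorphism onto an open subscheme. The collection $\{U\}\cup\{X_v:v\in V-W\}$ together with its finite intersections provides such a basis: any such $X'$ either satisfies $X'\subseteq U$, in which case $V\times_X X'=W\times_X X'$ and $T(X')$ is trivially acyclic, or lies in some $X_{v_0}$, in which case $V':=V_{v_0}\times_{X_{v_0}}X'$ gives a Zariski section $p|_{V'}\colon V'\xrightarrow{\sim}X'$ inside $V\times_X X'$. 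Using the Nisnevich uniqueness of preimages in $V-W$ over $X-U$, one then verifies the splitting $V\times_X X'=V'\cup(W\times_X X')$ as a Zariski open cover, with intersection $V'\cap(W\times_X X')\xrightarrow{\sim}U\cap X'$ via $p|_{V'}$. Applying classical Zariski Mayer-Vietoris to this open cover of $V\times_X X'$ rewrites $C^\bullet(|(V\times_X X')^{an}|,\Z)$ as the homotopy pullback $C^\bullet(|X'^{an}|,\Z)\times^h_{C^\bullet(|(U\cap X')^{an}|,\Z)}C^\bullet(|(W\times_X X')^{an}|,\Z)$; substituting into $T(X')$ and simplifying the resulting iterated homotopy pullback yields $T(X')\simeq 0$.

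The main obstacle is the geometric step of establishing the decomposition $V\times_X X'=V'\cup(W\times_X X')$: one must exploit the Nisnevich uniqueness carefully to rule out extra preimages in $V-W$ lying outside $V'$, a point already illustrated by simple examples (e.g.\ two open immersions of $\mathbb{A}^1$ into $\mathbb{A}^1$) showing that no global section of $p$ over $X$ need exist. Once the local decomposition is in place, the remaining manipulation of iterated homotopy pullbacks is routine bookkeeping.
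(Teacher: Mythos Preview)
Your argument is correct and takes a genuinely different route from the paper's proof. The paper works directly on the analytic side: since analytification sends \'etale maps to \'etale maps of Berkovich spaces, which are open, and $(V-W)^{an}\to(X-U)^{an}$ is a homeomorphism, the square of underlying topological spaces is a pushout along open embeddings, and topological Mayer--Vietoris applies in a single step. You instead stay on the algebraic side, using only that analytification respects \emph{Zariski} open covers, and then reduce the Nisnevich square Zariski-locally to one admitting a section of $p$, where the base-changed $V$ decomposes as a Zariski union. The paper's route is shorter but leans on the nontrivial input that analytified \'etale morphisms are open; yours trades this for the standard ``Nisnevich descent from Zariski descent plus local splitting'' technique and never invokes \'etaleness on the analytic side. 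Two small remarks: what you actually need is a finite open \emph{cover} closed under intersections (available since $X$ is quasi-compact), not a basis; and in your final ``routine bookkeeping'', note that the Mayer--Vietoris square you obtain for $V\times_X X'$ has its maps running opposite to those in the Nisnevich square---it is precisely the identities $s^*p^*=\mathrm{id}$ and $\sigma^*q^*=\mathrm{id}$ that make the two cartesian conditions equivalent.
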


\begin{proof}
We consider the pullback square of analytifications (this functor preserves pullbacks when working with trivially valued fields, see \cite[Theorem 3.5.1]{BerI}):
    \[
    \xymatrix{ W^{an}\ar[d]^{q^{an}}\ar[r]^{j^{an}} & V^{an} \ar[d]^{p^{an}} \\ U^{an}\ar[r]^{i^{an}} & X^{an}}
    \]

By \cite[Proposition 3.4.6 and discussion after Corollary 3.5.2 ]{BerI}, since $i$ is an open immersion, $i^{an}$ is also an open immersion. By \cite[Proposition 3.3.1]{BerII}, the map $p^{an}$ is \'etale, and thus by \cite[Proposition 3.2.7]{BerII}, we see that $p^{an}$ is an open map. Moreover, since $X-U\simeq V-W$, we have $|X^{an}|-|U^{an}|\simeq |V^{an}|-|W^{an}|$. Therefore, $|X^{an}|$ is covered by the open subsets $|U^{an}|$, $|V^{an}|$ with $|W^{an}|\simeq |U^{an}|\cap |V^{an}|$. The desired result follows from the standard Mayer-Vietoris sequence for an open covering of a topological space.
\end{proof}

\begin{Lemma}\label{cdh-descent}
The presheaf $\mathcal{F}^\bullet$ corresponding to the singular cochain complex of analytification satisfies MV-property for abstract blow-ups. In particular, every abstract blow-up square
\[
\xymatrix{ \widetilde{Z}\ar[d]^q\ar[r]^j & \widetilde{X} \ar[d]^p \\ Z\ar[r]^i & X}
\]
induces a long exact sequence in singular cohomology of analytifications:
\[
\ldots\to H^n(|X^{an}|, \Z)\to
H^n(|\widetilde{X}^{an}|, \Z)\oplus H^n(|Z^{an}|, \Z)\to
H^n(|\widetilde{Z}^{an}|, \Z)\to\ldots .
\]
\end{Lemma}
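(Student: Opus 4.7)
The plan is to establish the Mayer--Vietoris property at the level of sheaves of abelian groups on $|X^{an}|$ via proper base change, and then take global sections. First I would analytify the given abstract blow-up square. By \cite[Theorem 3.5.1]{BerI} (already invoked in the proof of Lemma~\ref{Nisn-descent}), analytification over a trivially valued field commutes with fibre products, so the analytified square is still cartesian. Since analytification preserves properness and closed immersions, the map $p^{an}$ is proper and $|p^{an}|$ is a closed continuous map, while $i^{an}$ is a closed immersion and $|i^{an}|$ is a closed topological embedding. Moreover, the underlying topological space of a Berkovich analytification is insensitive to nilpotents, so the hypothesis $(\widetilde X-\widetilde Z)_{red}\simeq(X-Z)_{red}$ upgrades to the statement that $|p^{an}|$ restricts to a homeomorphism
\[
|\widetilde X^{an}|\setminus|\widetilde Z^{an}|\xrightarrow{\sim}|X^{an}|\setminus|Z^{an}|.
\]

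Next, on $|X^{an}|$ I would form the natural morphism of complexes of sheaves
\[
\Z_{|X^{an}|}\longrightarrow Rp^{an}_{*}\Z_{|\widetilde X^{an}|}\oplus i^{an}_{*}\Z_{|Z^{an}|}\longrightarrow R(i^{an}\circ q^{an})_{*}\Z_{|\widetilde Z^{an}|}
\]
coming from the units of adjunction, and show that it extends to a distinguished triangle, i.e.\ that the evident commutative square
\[
\xymatrix{\Z_{|X^{an}|}\ar[r]\ar[d] & Rp^{an}_{*}\Z_{|\widetilde X^{an}|}\ar[d] \\ i^{an}_{*}\Z_{|Z^{an}|}\ar[r] & R(i^{an}\circ q^{an})_{*}\Z_{|\widetilde Z^{an}|}}
\]
is homotopy cartesian. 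I would check this locally on $|X^{an}|$. Over $|X^{an}|\setminus|Z^{an}|$ the two lower terms vanish and $Rp^{an}_{*}\Z$ reduces to $\Z$ by the homeomorphism above, so the square is trivially cartesian. Over $|Z^{an}|$, proper base change for the proper map $p^{an}$ of locally compact Hausdorff spaces gives $(i^{an})^{*}Rp^{an}_{*}\Z\simeq Rq^{an}_{*}\Z$, and the restricted square becomes the tautological pullback square with $\Z$ in one corner and $Rq^{an}_{*}\Z$ in the opposite corner, which is cartesian.

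Finally, applying $R\Gamma(|X^{an}|,-)$ to the distinguished triangle and identifying sheaf cohomology of the constant sheaf with singular cohomology (valid since $|X^{an}|$ is locally contractible by Berkovich's results) yields the desired long exact sequence
\[
\cdots\to H^n(|X^{an}|,\Z)\to H^n(|\widetilde X^{an}|,\Z)\oplus H^n(|Z^{an}|,\Z)\to H^n(|\widetilde Z^{an}|,\Z)\to\cdots,
\]
which at the cochain level is precisely the claimed acyclicity of the total complex associated to $\mathcal F^\bullet$.

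The hard part, I expect, is handling the topological regularity of Berkovich analytifications for general, possibly non-separated, schemes locally of finite type, since such analytifications can fail to be Hausdorff, which in turn makes the formulations of proper base change and the identification of sheaf cohomology with singular cohomology more delicate. I would circumvent this either by reducing to the separated case via a Nisnevich cover by affines combined with Lemma~\ref{Nisn-descent}, or by appealing directly to the structural results of Berkovich recalled in Section~\ref{Sec3} (good local topology, deformation retraction onto a skeleton, local contractibility) to secure the sheaf-theoretic inputs needed above.
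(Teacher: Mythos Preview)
Your argument is correct and somewhat more elaborate than the paper's. The paper proceeds by observing that $i^{an}$ is a closed immersion and $p^{an}$ is proper with closed image, so that $|X^{an}|$ is covered by the closed subsets $p^{an}(|\widetilde X^{an}|)$ and $|Z^{an}|$ with intersection $q^{an}(|\widetilde Z^{an}|)$, and then simply invokes the Mayer--Vietoris sequence for a closed covering from \cite[II.5.5]{Iverson}. Your route through a sheaf-level distinguished triangle and proper base change is genuinely different in that it makes explicit \emph{why} the cohomology of $|\widetilde X^{an}|$ itself (rather than of its image in $|X^{an}|$) appears in the sequence: this is exactly what proper base change buys you, and the paper's one-line appeal to closed-cover Mayer--Vietoris leaves that step implicit. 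The trade-off is that your approach needs the identification of sheaf cohomology with singular cohomology via local contractibility, which is an extra input the paper does not isolate. Your caution about non-separated $X$ (hence possibly non-Hausdorff $|X^{an}|$) is well-placed; the paper does not address it either, and your suggested reduction to the affine case via Lemma~\ref{Nisn-descent} is a clean way to close that gap for both arguments.
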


\begin{proof}
We consider the pullback square of analytifications:
    \[
    \xymatrix{ \widetilde{Z}^{an}\ar[d]^{q^{an}}\ar[r]^{j^{an}} & \widetilde{X}^{an} \ar[d]^{p^{an}} \\ Z^{an}\ar[r]^{i^{an}} & X^{an}}
    \]
By \cite[Corollary 3.5.2 and Proposition 3.4.7]{BerI}, since $i$ is a closed immersion, $i^{an}$ is also a closed immersion. Additionally, by \cite[Corollary 3.5.2 and Proposition 3.4.7]{BerI} the map $p^{an}$ is proper, and thus by \cite[Proposition 3.3.6]{BerI} we conclude that $p^{an}(\widetilde{X}^{an})$ is a closed $k$-analytic set in $X$. Similar to the previous lemma, $|X^{an}|$ is covered by the closed subsets $|\widetilde{X}^{an}|$, $|Z^{an}|$ with $|\widetilde{Z}^{an}|\simeq |\widetilde{X}^{an}|\cap |Z^{an}|$. The desired result follows from the standard Mayer-Vietoris sequence for a closed covering of a topological space \cite[II.5.5]{Iverson}.
\end{proof}

We prove that the presheaf of cochain complexes $\mathcal{F}^\bullet(-)=C^\bullet(|-^{an}|, \Z)$ satisfies MV-property for elementary Nisnevich and abstract blow up squares. By Theorem~\ref{MV_quasifibrant}, $\mathcal{F}^\bullet$ is a hypersheaf, yielding a natural isomorphism 
\[
\mathcal{F}^\bullet(U)\to R\Gamma(U, \mathcal{F}^\bullet)\simeq R\Gamma(U, \mathcal{F}_{cdh}^\bullet),
\]
where $\mathcal{F}^\bullet_{cdh}$ denotes the cdh-sheafification of $\mathcal{F}^\bullet$.

Our next objective  is to compute $\mathcal{F}^\bullet_{cdh}$. Since we assume that $k$ admits resolution of singularities, then not only does every $X$ in $Sch_k$ have an abstract blow-up $X'\to X$ with $X'$ smooth, but every proper birational cover $X'\to X$ has a refinement $X''\to X$ with $X''$ being smooth. This refinement is obtained as a composite of blow-ups along smooth centers. Consequently, every cdh-sheaf on $Sch_k$ is determined by its restriction to $Sm_k$.

The constant presheaf $\Z$ is a homotopy invariant presheaf with transfers. By \cite[Theorem 13.1]{MVW} the Nisnevich sheafification of a presheaf with transfers also admits transfers. Assuming resolution of singularities, we have the following sequence of isomorphisms for a connected and smooth $k$-scheme $X$:
\[
H^n_{cdh}(X, \Z)\simeq H^n_{Nis}(X, \Z)\simeq H^n_{Zar}(X, \Z).
\]

For the first isomorphism, see \cite[Corollary 5.12.3]{SV} or \cite[Proposition 13.27]{MVW}. For the second isomorphism, see \cite[Proposition 13.9]{MVW} or \cite[Theorem 4.5]{Beil_Vologod}. Moreover, since the sheaf $\Z$ is flasque, its Zariski cohomology $H^n_{Zar}(X, \Z)$ vanish for $n\geq 1$ and are isomorphic to $\Z$ for $n=0$.

By Remark~\ref{contracte}, we know that $|X^{an}|$ is contractible. Therefore, for a smooth scheme $X$ over $k$, we have $C^\bullet(|X^{an}|, \Z)\simeq \Z$. Since, a cdh-sheaf on $Sch_k$ is determined by its restriction to $Sm_k$, we obtain the following theorem

\begin{Th}\label{Comparison}
For any connected scheme $X$ locally of finite type over $k$, the canonical map 
\[
C^\bullet(|X^{an}|, \Z)\to R\Gamma(X_{cdh}, \Z)
\] 
is a quasi-isomorphism. In particular, we have the following isomorphism of cohomology groups:
\[
H^n(|X^{an}|, \Z)\simeq H^n_{cdh}(X, \Z).
\]
The same holds if $X$ is not connected. 
\end{Th}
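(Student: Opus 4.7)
The plan is to use the hypersheaf criterion of Theorem~\ref{MV_quasifibrant}. If the presheaf $\mathcal{F}^\bullet(-) = C^\bullet(|(-)^{an}|, \Z)$ satisfies the Mayer--Vietoris property for both elementary Nisnevich squares and abstract blow-ups, then $\mathcal{F}^\bullet$ is a cdh-hypersheaf, yielding a quasi-isomorphism $\mathcal{F}^\bullet(X) \xrightarrow{\sim} R\Gamma_{cdh}(X, \mathcal{F}^\bullet_{cdh})$. Combining this with an identification $\mathcal{F}^\bullet_{cdh} \simeq \Z$ in the derived category of cdh-sheaves then gives the theorem.

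The descent half is packaged by Lemmas~\ref{Nisn-descent} and~\ref{cdh-descent}. Because analytification over a trivially valued field commutes with fiber products, an elementary Nisnevich square (respectively, an abstract blow-up) analytifies to an open (respectively, closed) cover of $|X^{an}|$ with the correct intersections, so the standard topological Mayer--Vietoris sequence applies. Theorem~\ref{MV_quasifibrant} then upgrades these MV-properties into genuine cdh-descent for $\mathcal{F}^\bullet$.

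For the coefficient identification, the key geometric input is Remark~\ref{contracte}: when $\mathrm{char}(k) = 0$ and $X$ is smooth separated, Hironaka's theorem embeds $X$ in a smooth proper compactification, and the Berkovich--Thuillier deformation retraction of Theorem~\ref{MainHomotopy} forces $|X^{an}|$ to be contractible. Hence on $Sm_k$ the presheaf $\mathcal{F}^\bullet$ is pointwise quasi-isomorphic to the constant presheaf $\Z$. Under resolution of singularities, every cdh-sheaf on $Sch_k$ is determined by its restriction to $Sm_k$, so $\mathcal{F}^\bullet_{cdh}$ coincides with the constant cdh-sheaf $\Z$, whose cdh-cohomology on smooth schemes reduces to Zariski cohomology and is concentrated in degree zero. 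Chaining the descent isomorphism with this identification yields the desired $C^\bullet(|X^{an}|, \Z) \simeq R\Gamma_{cdh}(X, \Z)$; the connectedness hypothesis enters only at the $H^0$ level to ensure both sides are $\Z$ rather than a finite product of copies, and the disconnected case follows by additivity. The step I expect to be most delicate is upgrading the pointwise quasi-isomorphism on $Sm_k$ into a genuine quasi-isomorphism of cdh-hypersheaves (rather than a mere agreement on associated cohomology sheaves); I would handle this by comparing both complexes as cdh-hypersheaves with the same values on smooth schemes, where resolution of singularities supplies enough smooth refinements to test quasi-isomorphisms.
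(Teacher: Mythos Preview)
Your proposal is correct and follows essentially the same route as the paper: establish the MV-property for $\mathcal{F}^\bullet$ via Lemmas~\ref{Nisn-descent} and~\ref{cdh-descent}, apply Theorem~\ref{MV_quasifibrant} to get cdh-hypersheaf descent, and then identify $\mathcal{F}^\bullet_{cdh}$ with $\Z$ using contractibility of $|X^{an}|$ for smooth $X$ (Remark~\ref{contracte}) together with the fact that resolution of singularities lets cdh-sheaves be tested on $Sm_k$. The ``delicate step'' you flag is handled in the paper in exactly the way you suggest, by reducing to values on smooth schemes.
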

Theorem~\ref{Comparison} provides a useful tool for computing cdh-cohomology of a scheme $X$ with $\Z$-coefficients. We will demonstrate an application of this result in the next section

\section{An application to motives}\label{Sec5}

Let $G$ be a commutative algebraic group over a perfect field $k$. Then the presheaf $\underline{G}$ of abelian groups represented by $G$ is a Nisnevich sheaf with transfers. Using Theorem~\ref{Comparison} and appropriate simplicial machinery, we construct a special resolution of $\underline{G}$ by iterating (reduced) symmetric powers of $G$.  Using this resolution, we prove that $\RHom_{Sh_{Nis}(cor_k)}(\underline{G}, \Z)$ is trivial.  Subsequently, we compute $\RHom_{\mathcal{PS}h_{tr}}(\underline{A}, \underline{B})$ in the category of presheaves with transfers, where $A$ and $B$ are two abelian varieties over $k$.

\subsection{A motive associated to an algebraic $k$-group} In this subsection, we explain why the presheaf represented by any commutative algebraic group $G$ over a perfect field $k$ is naturally a Nisnevich sheaf with transfers, which we denote by $\underline{G}$. Furthermore, when $G$ is a semi-abelian variety over $k$ then $\underline{G}$ is also $\mathbb{A}^1$-homotopy invariant.  We conclude with a brief review of some fundamental results in the theory of presheaves with transfers and Voevodsky motives.

Let $Sch=Sch_k$ be the category of all schemes over $k$ and $Sm=Sm_k$ be the subcategory of smooth $k$-varieties. 

\begin{Def}
    Let $X$ and $Y$ be schemes over $k$ such that $X$ is normal. We denote by $cor(X, Y)$ the free abelian group generated by irreducible closed subsets $W\subset X\times Y$ whose associated integral subscheme is finite and surjective over $X$. Such $W$ is called an elementary correspondence, and elements of $cor(X,Y)$ are referred to as finite correspondences. 
\end{Def}
A correspondence $\sum a_i\Gamma_i\in cor(X, Y)$ is said to be effective if all $a_i\geq 0$. Thus, $cor(X,Y)$ is the group completion of a commutative monoid of effective correspondences, which we denote by $cor(X, Y)^{\eff}$.

Let $X$, $Y$, $Z$ be schemes over $k$ such that $Y$ and $Z$ are normal. Then there is a composition map $cor(Y, X)\otimes cor(Z, Y)\to cor(Z, X)$. This composition is defined by $\gamma\otimes\gamma'\mapsto \gamma\gamma'$, where $\gamma\gamma'$ is the push-forward by the projection $X\times Y\times Z\to X\times Z$ of the cycle $(X\times\gamma')\cap(\gamma\times Z)$ on $X\times Y\times Z$. Since $Y$ is smooth, the cycle-theoretic intersection is well-defined. This composition is associative. 

A fundamental example of an elementary correspondence is the graph $\Gamma_f$ of a map $f\colon X\to Y$ between two smooth varieties over $k$ where $X$ is connected. When $X$ is not connected we take the sum of the components of $\Gamma_f$ to obtain an element in $cor(X, Y)$.

\begin{Def}
Let $cor_k$ be the category whose objects are smooth varieties over $k$,
and whose morphisms are finite correspondences. It is an additive category with $\emptyset$ as the zero object and disjoint union as the coproduct. It also admits a tensor product $X\otimes Y=X\times Y$, which makes $cor_k$ into a symmetrical monoidal category. 
\end{Def}

There is a faithful embedding $Sm_k\hookrightarrow cor_k$ which sends $X$ to $X$ and maps a morphism $X\to Y$ to its graph $\Gamma_f$ considered as a cycle in $X\times Y$. One can easily verify that $\Gamma_g\circ\Gamma_f$ equals $\Gamma_{g\circ f}$ for any $f\colon X\to Y$ and $g\colon Y\to Z$ in $Sm$. 

\begin{Def}
A presheaf with transfers is a contravariant additive functor from $cor_k$ to the category of abelian groups $\mathbf{Ab}$. We denote by $PSh(cor_k)$ the category whose objects are presheaves with transfers and whose morphisms are natural transformations. 
\end{Def}

The restriction forgetful functor from $PSh(cor_k)$ to presheaves of abelian groups on $Sm$ is exact and faithful, allowing us to view elements of $PSh(cor_k)$ as ordinary presheaves with some additional structure. For $X\in Sch$, we denote by $\Z_{tr}[X]$ the presheaf with transfers defined by $Y\mapsto cor(Y, X)$. This induces a functor from $Sch$ to $PSh(cor_k)$. When $X$ is smooth, $\Z_{tr}[X]$ is a projective object in $PSh(cor_k)$ by the Yoneda lemma. 

For a pointed scheme $(X, x)$ over $k$, we define $\Z_{tr}[X/x]$ as the cokernel of the map $x_*\colon\Z\to\Z_{tr}[X]$ which is induced by $x\colon \Spec(k)\to X$. The map $x_*$ splits the structure map $\Z_{tr}[X]\to \Z$, yielding a natural splitting $\Z_{tr}[X]\simeq\Z\oplus\Z_{tr}[X/x]$.

\begin{Def}
A presheaf with transfers $F$ is called $\mathbb{A}^1$-homotopy invariant if for every $X$, the map $p^*\colon F(X)\to F(X\times\mathbb{A}^1)$ is an isomorphism. 
\end{Def}

\begin{Con}
Let $\Delta^\bullet$ be the standard cosimplicial scheme with simplicies given by $\Delta^n=\{(t_0,\ldots, t_n)\in \mathbb{A}^{n+1} : \sum t_i =1\}$. For any presheaf of abelian groups $F$ on $Sm$, we define a simplicial presheaf by $U\mapsto F(U\times \Delta^\bullet)$, which we denote by $C_\bullet(F)$. When $F$ is a presheaf with transfers, $C_\bullet(F)$ is also a simplicial presheaf with transfers. Let $C_*(F)$ be the associated chain complex and $C^\Delta_*(F)$ be the associated normalized complex. For any presheaf $F$, the homology presheaves given by $U\mapsto H_nC_* F(U)$ are $\mathbb{A}^1$-homotopy invariant for all $n$. In particular, the morphism $\mathcal{F}\to H_0C_* F$ is the universal map from $\mathcal{F}$ to an $\mathbb{A}^1$-homotopy presheaf. 
\end{Con}

\begin{Def}
A presheaf with transfers $F$ is called a Nisnevich sheaf with transfers if its underlying presheaf is a Nisnevich sheaf on $Sm$.
\end{Def}

We note that by \cite[Lemma 6.2]{MVW}, a presheaf with transfers $\Z_{tr}[X]$ is an \'etale (Nisnevich) sheaf for any $k$-scheme $X$. We denote by $Sh_{Nis}(cor_k)$ the category of Nisnevich sheaves with transfers and write $\mathbf{D}^{-}$ for bounded above derived category $\mathbf{D}^{-}Sh_{Nis}(cor_k)$. Let $I_\mathbb{A}$ be the smallest thick subcategory of $\mathbf{D}^-$ containing every $\Z_{tr}[X\times\mathbb{A}^1]\to \Z_{tr}[X]$ and closed under direct sums. The quotient $\mathbf{D}^-/I_{\mathbb{A}}$ is the localization $\mathbf{D}^-[W^{-1}]$, where $W$ is the class of maps in $\mathbf{D}^-$ whose cone lies in $I_\mathbb{A}$. We define the triangulated category of Voevodsky motives as this localization and denote it by $\DMeN(k, \Z)$ or just $\DMeN$. For a smooth scheme $X$ over $k$, we denote by $M(X)$ the class of $\Z_{tr}[X]$ in $\DMeN(k, \Z)$.

The thick subcategory of $\DMeN(k, \Z)$ generated by such $M(X)$ is called the category of effective geometric motives and is denoted by $\mathbf{DM}^{\text{eff}}_{\text{gm}}(k, \Z)$. When $k$ admits resolution of singularities, it turns out that $\mathbf{DM}^{\text{eff}}_{\text{gm}}(k, \Z)$ contains $M(Y)$ for every $Y\in Sch_k$ and is generated by $M(X)$, where $X$ is smooth and projective. 

Recall that informally, finite correspondences are multi-valued maps. To make this more precise, we need the notion of symmetric power of a scheme $X$. 

\begin{Def}\label{symmetric}
    Let $X$ be a scheme of finite type over $k$ such that any finite subset of $X$ is contained in an affine open set (typically, $X$ is a quasi-projective variety over $k$). We define the $n^{th}$ symmetric power $S^n(X)$ of $X$ as the quotient of $X^n$ by the action of the symmetric group $S_n$. We denote by $S^\bullet(X)$ the disjoint union $\coprod_{n\geq 0}S^n(X)$. 
\end{Def}

Then $S^\bullet(X)$ is a universal commutative monoidal scheme over $k$ generated by $X$. The following result of Suslin and Voevodsky establishes  the precise connection between correspondences and symmetric powers.

\begin{Prop}[Suslin, Voevodsky]\label{SV}
Let $X$ and $Y$ be schemes over $k$ such that $X$ satisfies the conditions of Definition~\ref{symmetric} and $Y$ is normal. Then there is a natural morphism of monoids 
    \[
    g\colon cor_k(Y, X)^{\eff}\to Mor_{Sch}
    (Y, S^\bullet(X)).
    \]
When $char(k)$ is zero, $g$ is an isomorphism; otherwise it becomes an isomorphism after inverting $p$. Moreover, $g$ is compatible with composition when $Y$ is quasi-projective.
\end{Prop}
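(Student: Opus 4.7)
The plan is to construct $g$ on generators and extend additively, then verify the claimed properties. Given an elementary effective correspondence $W\subset Y\times X$, finite and surjective of degree $d$ over the normal base $Y$, denote the two projections by $\pi\colon W\to Y$ and $q\colon W\to X$. Over the open dense flat locus $U\subseteq Y$, the pushforward $\pi_*\mathcal{O}_W|_U$ is locally free of rank $d$, and I would define $g(W)|_U\colon U\to S^d(X)$ by the rule sending a geometric point $\bar u$ to the effective $0$-cycle
\[
    \sum_{w\in W_{\bar u}}\mathrm{length}(\mathcal{O}_{W_{\bar u},w})\cdot [q(w)]
\]
on $X$. Using normality of $Y$ together with separatedness of $S^d(X)$, I would then extend $g(W)$ uniquely to all of $Y$: the standard norm/trace construction on the reflexive $\mathcal{O}_Y$-module $\pi_*\mathcal{O}_W$ extends the map across the codimension-one part of the non-flat locus, and Hartogs' lemma takes care of the rest. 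The monoid structure forces the extension to a general effective correspondence via $g\bigl(\sum n_i W_i\bigr)=\sum n_i\, g(W_i)$ in the commutative monoid $\mathrm{Mor}_{Sch}(Y, S^\bullet(X))$.

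For injectivity, I would use that an effective correspondence is determined by its restriction to the generic points of $Y$ together with multiplicities; since $g(\alpha)$ records exactly this local cycle data at every point, the equality $g(\alpha)=g(\beta)$ forces $\alpha$ and $\beta$ to agree generically, hence globally by normality of $Y\times X$ along $Y$. For surjectivity, given $f\colon Y\to S^n(X)$ the goal is to extract an effective cycle on $Y\times X$. In characteristic zero one forms the pullback $\widetilde{W}=Y\times_{S^n(X)} X^n$ and, using that the quotient $X^n\to S^n(X)$ is finite and generically étale of degree $n!$, recovers a unique effective cycle of degree $n$ over $Y$ whose image under $g$ is $f$; equivalently, this is the statement that $S^n(X)$ represents the functor of relative effective $0$-cycles of degree $n$ on $X$. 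In positive characteristic the quotient $X^n\to S^n(X)$ is a universal homeomorphism but fails to be flat, the failure being concentrated in $p$-power torsion at the fixed points of the $S_n$-action, so inverting $p$ is precisely what is needed to make the preceding argument go through.

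The main obstacle is the careful treatment of the positive-characteristic case together with the extension of $g(W)$ across the non-flat locus of $\pi$; both rest on the norm/trace formalism for finite covers of normal bases and, for characteristic $p$, on the Suslin--Voevodsky theory of relative $0$-cycles and divided powers. Finally, when $Y$ is quasi-projective, compatibility of $g$ with composition follows from the universal monoidal property of $S^\bullet$: composition of correspondences translates, via the identifications above, into the induced monoid composition of maps into $S^\bullet$ of symmetric products, and the projection formula for cycle-theoretic intersections matches both operations.
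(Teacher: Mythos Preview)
The paper does not prove this proposition; it simply refers to \cite[Theorem~6.8]{Sing} and \cite[Proposition~2.1.3]{Beil_Vologod}. Your sketch follows the outline of the Suslin--Voevodsky argument, but the extension step has a genuine gap. Once $g(W)$ is defined on the flat locus $U\subset Y$ (whose complement indeed has codimension $\geq 2$, since $\pi_*\mathcal{O}_W$ is torsion-free on the normal base $Y$ and hence locally free in codimension~$1$), you invoke Hartogs to extend the map $U\to S^d(X)$ to all of $Y$. But Hartogs-type extension for morphisms requires the target to be affine; $S^d(X)$ is only quasi-projective, and the map $\mathbb{A}^2\setminus\{0\}\to\mathbb{P}^1$, $(x,y)\mapsto[x:y]$, shows this fails in general. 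The actual construction routes through the \emph{relative} symmetric product: the norm on $\pi_*\mathcal{O}_W$ produces the canonical section $Y\to S^d_Y(W)$ into a scheme \emph{affine} over $Y$, where the reflexivity/normality argument does apply, and one then composes with $S^d_Y(W)\to S^d(X)$ induced by $q$.

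Two smaller points. Your surjectivity mechanism is off: the fibre product $Y\times_{S^n(X)}X^n$ has degree $n!$ over $Y$, not $n$, so it does not directly yield a degree-$n$ correspondence; the correct universal family is the degree-$n$ ``add a point'' map $X\times S^{n-1}(X)\to S^n(X)$, whose pullback along $f$ gives the candidate cycle. And your characteristic-$p$ diagnosis is incorrect: $X^n\to S^n(X)$ is generically \'etale of degree $n!$ in every characteristic (it is never a universal homeomorphism), and its non-flatness when $\dim X\geq 2$ already occurs in characteristic~$0$. The genuine $p$-torsion discrepancy between effective relative $0$-cycles and maps to $S^n(X)$ is a separate and more delicate phenomenon, analysed in \cite{Sing}.
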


\begin{proof}
    For the proof see \cite[Theorem 6.8]{Sing} or \cite[Proposition 2.1.3]{Beil_Vologod}.
\end{proof}

To proceed further, we need to consider presheaves with transfer on a larger category of schemes. For this purpose, we denote by $\mathcal{PS}h_{tr}$ the category of presheaves of abelian groups on the category $Cor_k$. The objects of $Cor_k$ are separated schemes of finite type over $k$ with the property that any finite subset of any object $X\in Cor_k$ is contained in an affine open subset. The morphisms $Mor_{Cor_k}(X, Y)$ are given by the group completion of the monoid $Mor_{Sch_{k}}(S^\bullet (X), S^\bullet (Y))$. The composition of two morphisms is defined by the standard composition of maps between symmetric powers. We denote by $Sh_{Nis}(Cor_k)$ the category of Nisnevich sheaves with transfers on $Cor_k$.

Let $X$ and $Y$ be smooth schemes over a perfect field $k$, and let $G$ be a commutative algebraic group over $k$. We usually denote its identity element by $e_G=e$. For an elementary correspondence $Z\in cor(X, Y)$ associated with a closed integral subscheme $Z\subseteq X\times Y$, we can construct a morphism
\[
\beta(Z)=\beta_{X, Y, G}(Z)\colon Mor_{Sch}(Y, G)\to Mor_{Sch}(X, G).
\]
as follows. Let $d$ be the degree of the induced projection $Z\to X$. Using Proposition~\ref{SV}, we find a canonical map $\alpha_Z:X\to S^d(Y)$. We then define $\beta(Z)(g)$ as the composition:
\[
\xymatrix{
\beta(Z)(g)\colon X\ar[r]^-{\alpha_Z} & S^d(Y)\ar[r]^-{S^d(g)} & S^d(G)\ar[r]^-{\bar{m}^d} & G,}
\]
where $\bar{m}^d$ is induced by a $d$-fold multiplication map $G^d\to G$.
For a general correspondence, the construction extends by linearity.

This construction of $\beta$ enables us to view $\underline{G}=Mor_{Sch}(-, G)$ as a presheaf with transfers on smooth varieties over $k$. We recall the following result of Spie{\ss} and Szamuely.

\begin{Prop}
    For any commutative group scheme $G$ over a perfect field $k$, there exists a sheaf with transfers on big \'etale (Nisnevich) site given by $\underline{G}=Mor_{Sch}(-, G)$. Moreover, when $G$ is a semi-abelian variety, $\underline{G}$ is also $\mathbb{A}^1$-homotopy invariant.
\end{Prop}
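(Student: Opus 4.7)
The plan is to verify the three assertions in turn: that $\underline G = Mor_{Sch}(-, G)$ is a sheaf on the big étale (and Nisnevich) site, that the assignment $Z \mapsto \beta(Z)$ given above is functorial in $Z$ so that $\underline G$ acquires a transfer structure, and finally that $\underline G$ is $\mathbb{A}^1$-homotopy invariant when $G$ is semi-abelian.

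For the sheaf property I would simply invoke faithfully-flat descent: the functor of points of any $k$-scheme is an fpqc sheaf, and hence automatically an étale sheaf and a Nisnevich sheaf. No extra work is required here.

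For the transfer structure, the content is to show that $Z \mapsto \beta(Z)$ respects composition of correspondences. Given smooth $k$-schemes $X,Y,W$ and effective correspondences $a \in cor(X,Y)^{\eff}$ and $b \in cor(Y,W)^{\eff}$ of degrees $d$ and $e$ over their respective sources, I would verify that the Suslin--Voevodsky map $\alpha_{b \circ a}\colon X \to S^{de}(W)$ factors as the composite of $\alpha_a\colon X \to S^d(Y)$, then $S^d(\alpha_b)\colon S^d(Y) \to S^d(S^e(W))$, then the canonical summing map $\sigma\colon S^d(S^e(W)) \to S^{de}(W)$. Combined with the identity $\bar m^{de} \circ \sigma = \bar m^d \circ S^d(\bar m^e)$, which expresses associativity and commutativity of the group law on $G$, this yields $\beta(b \circ a) = \beta(a) \circ \beta(b)$ on $Mor(W,G)$. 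Extending from effective to all correspondences by $\Z$-linearity gives the claim, and a routine check shows that $\beta(\Gamma_f)$ recovers the usual pullback along $f\colon X \to Y$. In positive characteristic the map $g$ of Proposition~\ref{SV} is only an isomorphism after inverting $p$, but $\beta$ uses $g$ only to produce the representative $\alpha_a$, which is well-defined integrally, so the characteristic does not intervene.

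For $\mathbb{A}^1$-homotopy invariance when $G$ is semi-abelian, I would use the presentation $1 \to T \to G \to A \to 1$ with $T$ a torus and $A$ an abelian variety. Applying $Mor_{Sch}(X \times \mathbb{A}^n, -)$ for $n = 0, 1$ produces a morphism of exact sequences, reducing the question to $\mathbb{A}^1$-invariance of $\underline T$ and $\underline A$ together with the comparison of the associated first-cohomology obstructions (which reduces to the Picard-type statement). For the abelian part, rigidity says that every morphism $\mathbb{A}^1 \to A$ is constant, so any morphism $X \times \mathbb{A}^1 \to A$ is constant on every $\mathbb{A}^1$-fibre and hence factors through the projection $X \times \mathbb{A}^1 \to X$. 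For the torus part, it suffices to treat $\mathbb{G}_m$, where $\underline{\mathbb{G}_m}(X) = \mathcal{O}(X)^\times$; the identity $\mathcal{O}(X \times \mathbb{A}^1)^\times = \mathcal{O}(X)^\times$ for reduced $X$ is standard.

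The main obstacle I expect is the compositional functoriality of $\beta$, because one must track degrees and intersection multiplicities carefully when identifying $\alpha_{b \circ a}$ with the iterated symmetric-power composition through $\sigma$; once this combinatorial identity is established, the remaining steps (the descent argument, the reduction via the semi-abelian extension, and the rigidity/units computations) are essentially mechanical.
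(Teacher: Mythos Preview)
The paper does not give a proof here; it simply cites Spie{\ss}--Szamuely for the transfer structure and Orgogozo for $\mathbb{A}^1$-invariance. Your sketch of the first two points (fpqc descent for the sheaf property, and compatibility of the Suslin--Voevodsky maps $\alpha_{-}$ with composition to obtain functoriality of $\beta$) is the standard argument and matches what those references do; your observation that only the forward direction of the map $g$ in Proposition~\ref{SV} is used, so positive characteristic does not obstruct the construction of $\alpha_Z$, is correct.

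For $\mathbb{A}^1$-invariance your exact-sequence approach can be made to work but is more involved than necessary, and the phrase ``comparison of the associated first-cohomology obstructions'' conceals a genuine step: to glue the torus and abelian-variety cases you need injectivity of $H^1(X,T) \to H^1(X\times\mathbb{A}^1,T)$, which for split $T$ amounts to $\mathrm{Pic}(X) \simeq \mathrm{Pic}(X\times\mathbb{A}^1)$ on smooth $X$. The direct argument, which is what Orgogozo records, bypasses this bookkeeping entirely: one first shows that every morphism $\mathbb{A}^1 \to G$ is constant for semi-abelian $G$ (project to $A$ and use rigidity, then land in a fibre isomorphic to $T$ and use the units computation), and then for any $\phi\colon X\times\mathbb{A}^1 \to G$ the difference $\phi(x,t)-\phi(x,0)$ vanishes on every geometric fibre $\{x\}\times\mathbb{A}^1$, hence identically since $X\times\mathbb{A}^1$ is reduced.
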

\begin{proof}
    See \cite[Lemma 3.2]{SS} and \cite[Lemma 3.3.1]{Org}. 
\end{proof}
When $G$ is a semi-abelian variety over $k$, we denote by $M_1(G)$ the complex given by $\underline{G}$ concentrated in degree $0$ in $\DMeN(k, \Z)$.

\subsection{Simplicial machinery for symmetric power of a scheme}
In this subsection, we apply simplicial machinery to construct a resolution of $\underline{G}$ in $Psh(cor_k)$. Using Theorem~\ref{Comparison}, we then prove that this resolution is $\Hom_{Sh_{Nis}(cor_k)}(-, \Z)$-acyclic. 

Let us first review the relevant simplicial methods for constructing resolutions. 
\begin{Def}
An augmented simplicial object $\xymatrix{A_*\ar[r]^{\epsilon} & A_{-1}}$ is called right contractible if for all $n$ there are morphisms $f_n\colon A_n\to A_{n+1}$ (called extra degeneracies) such that $\epsilon f_{-1}=id$, $\partial_{n+1}f_n=id$ for $n\geq 0$, $\partial_0f_0=f_{-1}\epsilon$ and $\partial_if_n=f_{n-1}\partial_i$ for $0\leq i\leq n$.
    
Recall that the category $\Delta$ has a natural involution $^\vee$  which fixes every object $[n]\in\Delta$. This involution is defined on the morphisms by
    \[
    \partial_i^\vee=\partial_{n-i}\colon[n-1]\to n \text{ and } \sigma_i^\vee=\sigma_{n-i}\colon[n+1]\to [n].
    \]
For a simplicial object $A$, we define its front-to-back dual $A^\vee$ as the composition of $A$ with this involution. 
    
An augmented simplicial object $\xymatrix{A_*\ar[r]^{\epsilon} & A_{-1}}$ is called left contractible if its dual augmented simplicial object is right contractible. Explicitly, this means that there exist morphisms $f_n\colon A_n\to A_{n+1}$ for all $n$ satisfying $\epsilon f_{-1}=id$, $\partial_{0}f_n=id$ for $n\geq 0$, $\partial_{-1}f_0=f_{-1}\epsilon$ and $\partial_i f_n=f_{n-1}\partial_{i-1}$ for $0\leq i\leq n$.
\end{Def} 
\begin{Rem}
    If $\xymatrix{A_*\ar[r]^\epsilon & A_{-1}}$ is a contractible augmented simplicial object with values in an abelian category $\mathcal{A}$, then it is aspherical, meaning  $\pi_n(A_*)=0$ for $n\neq 0$ and $\epsilon\colon\pi_0(A_*)\simeq A_{-1}$. For further discussion, see \cite{Weibel}, end of Section 8.4. 
\end{Rem}
\begin{Con}\label{simp0}
Let $\mathcal{C}$ be the category of separated schemes over $k$ such that any finite subset of $X\in\mathcal{C}$ is contained in an affine open subset, and let $\mathcal{D}$ be the category of commutative monoidal separated schemes satisfying the same property. The symmetric power functor $S^\bullet$ maps $\mathcal{C}$ to $\mathcal{D}$ and is left adjoint to the forgetful functor $U\colon \mathcal{D}\to \mathcal{C}$. Thus, the composition $\perp = S^\bullet\circ U$ is a cotriple, and by \cite[8.6.4]{Weibel}, such a cotriple yields an augmented simplicial commutative monoidal scheme:
\[
    \xymatrix{
    \ldots \ar@<1ex>[r]\ar@<0ex>[r]\ar@<-1ex>[r] &\perp(\perp(D))\ar@<-.5ex>[r] \ar@<.5ex>[r] & \perp(D)\ar[r] & D
    }
 \]
for any $D\in\mathcal{D}$. In particular, for a commutative algebraic group $G$ over $k$, we obtain an augmented simplicial commutative monoidal $k$-scheme:
    \[
    \xymatrix{
    \ldots \ar@<1ex>[r]\ar@<0ex>[r]\ar@<-1ex>[r] &S^\bullet(S^\bullet(G))\ar@<-.5ex>[r] \ar@<.5ex>[r] & S^\bullet(G)\ar[r] & G.
    }
    \]
\end{Con}

\begin{Prop}\label{simp}
Let $G$ be a commutative algebraic group over $k$. The augmented simplicial commutative monoidal scheme
\[
\xymatrix{
\ldots \ar@<1ex>[r]\ar@<0ex>[r]\ar@<-1ex>[r] &S^\bullet(S^\bullet(G))\ar@<-.5ex>[r] \ar@<.5ex>[r] & S^\bullet(G)\ar[r] & G
}
\]
from Construction~\ref{simp0} is left contractible when viewed as simplicial $k$-scheme.
\end{Prop}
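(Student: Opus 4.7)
The plan is to recognize the augmented simplicial object in question as the standard cotriple (bar) resolution attached to the adjunction $S^\bullet \dashv U$ and to produce the extra degeneracies from the unit of this adjunction, applied \emph{on the outside} of each term.

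First I would record that the unit $\eta_X\colon X\to US^\bullet(X)$ is, after forgetting the monoidal structure, simply the inclusion of $X=S^1(X)$ as the degree-one component of $S^\bullet(X)=\coprod_{n\ge 0}S^n(X)$. Writing level $n$ of the augmented simplicial $k$-scheme $\perp^{\bullet+1}(G)\to G$ as $A_n=(S^\bullet)^{n+1}(G)$ for $n\ge 0$ and $A_{-1}=G$, I would define the candidate extra degeneracies
\[
f_n\colon A_n=(S^\bullet)^{n+1}(G)\xrightarrow{\ \eta_{(S^\bullet)^{n+1}(G)}\ }(S^\bullet)^{n+2}(G)=A_{n+1}
\]
for $n\ge 0$ and $f_{-1}=\eta_G\colon G\to S^\bullet(G)=A_0$. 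Note that $f_{-1}$ is only a morphism of $k$-schemes rather than of commutative monoidal $k$-schemes, which is exactly why the claim is stated for the underlying simplicial $k$-scheme.

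Next, I would verify the left contractibility identities. Writing $\epsilon\colon\perp\to\mathrm{id}_{\mathcal{D}}$ for the counit of the adjunction (on a commutative monoidal scheme $D$ it is the iterated multiplication $S^\bullet(UD)\to D$), both the augmentation $A_0\to A_{-1}$ and the outermost face $\partial_0$ at every level are given by $U\epsilon$ evaluated at the appropriate object. The relations $\epsilon\circ f_{-1}=\mathrm{id}_G$ and $\partial_0\circ f_n=\mathrm{id}_{A_n}$ then become precisely the triangle identity $(U\epsilon_D)\circ\eta_{UD}=\mathrm{id}_{UD}$ applied to $D=G$ and $D=\perp^{n+1}(G)$, respectively. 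The remaining relations $\partial_i\circ f_n=f_{n-1}\circ\partial_{i-1}$, together with $\partial_{-1}\circ f_0=f_{-1}\circ\epsilon$, follow from naturality of $\eta$: each $\partial_{i-1}$ operates on an \emph{inner} copy of $\perp$ inside $(S^\bullet)^{n+1}(G)$, while $f_n$ applies $\eta$ to the outermost level, so the two operations commute on the nose.

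The argument above is the classical proof that any cotriple resolution admits a contraction after applying the right adjoint of the defining adjunction (see e.g.\ \cite[\S 8.6]{Weibel}), so there is no serious obstacle. The only point needing genuine care is to check that the functors $S^\bullet\colon\mathcal{C}\to\mathcal{D}$ and $U\colon\mathcal{D}\to\mathcal{C}$ in Construction~\ref{simp0} really do form an adjunction of the stated form; this is a consequence of the universal property of $S^\bullet(X)$ as the free commutative monoidal $k$-scheme on $X$, which is well-posed precisely because of the running hypothesis that every finite subset of an object of $\mathcal{C}$ lies in an affine open.
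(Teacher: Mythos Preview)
Your proposal is correct and is essentially the same as the paper's: the paper simply cites \cite[Proposition 8.6.10]{Weibel} applied to the cotriple $\perp=S^\bullet\circ U$, and what you have written is precisely the standard proof of that proposition, spelling out the extra degeneracies via the unit $\eta$ and verifying the identities from the triangle identity and naturality. The only (harmless) difference is that you unpack the argument rather than invoking the reference.
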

\begin{proof}
    This follows directly from  \cite[Proposition 8.6.10]{Weibel} with $\perp=S^\bullet\circ U$. 
\end{proof}

Our next goal is to use this construction to obtain a resolution of $\underline{G}$ in $PSh(cor_k)$. To pass from iterated symmetric powers of $G$ to correspondences, we require the following technical result due to Quillen:

\begin{Prop}\label{Quillen}
Let $M$ be a commutative simplicial monoid. If $M$ is group-like (i.e. $\pi_0(M)$ is a group), then the canonical map $M\to M^+$ to the degree-wise group completion of $M$ is a weak equivalence. 
\end{Prop}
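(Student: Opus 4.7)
The plan is to prove this by realizing $M \to M^+$ as a filtered colimit of self-weak-equivalences of $M$, exploiting the group-like hypothesis in a crucial way.

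I would first establish the key lemma that, under the group-like hypothesis, multiplication by any $s \in M_0$ (extended to all simplicial degrees via iterated degeneracies, giving a ``constant'' simplicial element) yields a self-map $\mu_s \colon M \to M$ which is a weak equivalence. Indeed, for any $s, s' \in M_0$ lying in the same component of $\pi_0(M)$, a connecting $1$-simplex $\sigma \in M_1$ produces a simplicial homotopy $\mu_s \simeq \mu_{s'}$ via $(x, \tau) \mapsto x \cdot \sigma_\tau$, so $\mu_s$ depends, up to homotopy, only on $[s] \in \pi_0(M)$. Picking a representative $t \in M_0$ of $[s]^{-1} \in \pi_0(M)$, one has $\mu_s \circ \mu_t = \mu_{st} \simeq \mu_e = \mathrm{id}_M$, so $\mu_s$ admits a two-sided homotopy inverse. (The existence of a single connecting $1$-simplex between $\pi_0$-equivalent vertices uses that $M$ is Kan, which itself follows from the group-like hypothesis via a standard argument about filling outer horns in a simplicial monoid.)

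Next, I would realize $M^+$ as a filtered colimit of copies of $M$ under such translation maps. The starting observation is that for a plain abelian monoid $N$, the Grothendieck group completion satisfies $N^+ \cong \colim(N \xrightarrow{+n_1} N \xrightarrow{+n_2} \cdots)$ for any sequence $(n_i)$ in $N$ hitting each element infinitely often. Applied degree-wise to $M$, and using the $\mu_s$'s (for $s$ running through a cofinal enumeration of $M_0$) as transition maps, this exhibits $M^+$ as a filtered colimit of copies of $M$ in simplicial sets. Since each transition is a weak equivalence by the previous step and filtered colimits of simplicial sets preserve weak equivalences, the canonical map $M \to M^+$ is itself a weak equivalence. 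On $\pi_0$, this is consistent with $\pi_0(M^+) = \pi_0(M)^{gp} = \pi_0(M)$ by hypothesis.

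The main obstacle is the technical reconciliation between the degree-wise group completion $(M_n)^+$ and the filtered colimit construction based only on ``constant'' simplicial elements $s \in M_0$: a priori, the latter only inverts the image of $M_0 \hookrightarrow M_n$ under degeneracies, while the former inverts all of $M_n$. The commutativity of $M$ together with the group structure on $\pi_0(M)$ ensures these agree up to weak equivalence, because higher-degree elements are then connected, through the simplicial structure, to degenerate images of $M_0$. An alternative route sidestepping this technicality is to invoke Quillen's classical group completion theorem $M \simeq \Omega BM$ for group-like simplicial monoids, and then compare $\Omega BM$ with the degree-wise group completion $M^+$ via a Dold-Kan argument in the commutative case, where both become models for the same simplicial abelian group up to weak equivalence.
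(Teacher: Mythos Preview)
Your primary approach has a genuine gap that you yourself flag but do not resolve. The filtered colimit of copies of $M$ along $M_0$-translations does not in general compute the degree-wise group completion $M^+$. Take $M = B\mathbb{N}$, the nerve of the additive monoid $\mathbb{N}$ with its levelwise monoid structure: here $M_0 = \{0\}$, so every $\mu_s$ is the identity and your colimit $L$ equals $M$ itself, yet $M^+ = B\mathbb{Z}$. Your proposed fix (``higher-degree elements are connected, through the simplicial structure, to degenerate images of $M_0$'') has no content in this example: the element $1 \in M_1 = \mathbb{N}$ admits no inverse modulo the degenerate submonoid $\{0\}$, and the localization $M_1[\{0\}^{-1}]$ is just $\mathbb{N}$ again, not $\mathbb{Z}$. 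Thus your argument collapses to the tautology $M \simeq M$ and says nothing about $M \to M^+$. (A smaller point: group-like commutative simplicial monoids are \emph{not} Kan in general---$B\mathbb{N}$ is again a counterexample, since filling the outer horn $\Lambda^2_0$ requires subtraction in $\mathbb{N}$---though this is repairable, as a zigzag of $1$-simplices suffices for your homotopy $\mu_s \simeq \mu_{s'}$.)

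Your alternative route, via $M \simeq \Omega BM$ followed by a comparison $\Omega BM \simeq M^+$, is exactly what the paper does: it cites Sagave--Sch\"urg--Vezzosi for the weak equivalences $M \to \Omega(BM)^{\mathrm{fib}}$ and $\Omega(BM)^{\mathrm{fib}} \to M^+$ in the group-like commutative case, and simply composes them. Note that the second comparison is not a lightweight Dold--Kan consequence as you suggest; it is itself a form of the group completion theorem, and the paper outsources it rather than proving it from scratch.
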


\begin{proof}
By \cite[Lemma 2.3]{SSV}, the adjunction map $M\to \Omega(B(M)^\text{fib})$ to the homotopy theoretic group completion $\Omega(B(M)^\text{fib})$ is a weak equivalence. Furthermore, by \cite[Lemma 2.10 and Remark 2.13]{SSV} the natural map $\Omega(B(M)^\text{fib})\to M^+$ is also a weak equivalence. The result follows from these two facts.
\end{proof}

From this point forward, we assume that the base field $k$ has characteristic $0$.

\begin{Th}\label{acyclic}
Let $G$ be a commutative algebraic group over $k$. The augmented simplicial $k$-scheme of Proposition~\ref{simp} yields a resolution of $\underline{G}$ in $PSh(cor_k)$ of the form:
\[
    \xymatrix{
    \ldots\ar[r] &\Z_{tr}[S^\bullet(S^\bullet(G))]\ar[r] &\Z_{tr}[S^\bullet(G)]\ar[r]&\Z_{tr}[G]\ar[r]&\underline{G}.
    }
\]
Moreover, this resolution is $\Hom_{Sh_{Nis}(cor_k)}(-, \Z)$-acyclic.
\end{Th}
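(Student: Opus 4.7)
The plan is to prove the statement in two stages. Part A establishes exactness of the augmented complex by transferring the contractibility of Proposition~\ref{simp} through Quillen's group-completion theorem. Part B verifies $\Hom_{Sh_{Nis}(cor_k)}(-,\Z)$-acyclicity by exhibiting every iterated symmetric power $X_n := (S^\bullet)^{\circ n}(G)$ (so $X_0 = G$, $X_1 = S^\bullet(G)$, and so on) as a disjoint union of quotients of smooth schemes by finite groups, and then killing cohomology via Corollary~\ref{quotient} and Theorem~\ref{Comparison}.

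\emph{Part A.} Fix a smooth $k$-variety $Y$ and apply the representable functor $Mor_{Sch_k}(Y,-)$ to the augmented simplicial commutative monoidal scheme $Z_\bullet \to G$ of Proposition~\ref{simp}, where $Z_n = (S^\bullet)^{\circ(n+1)}(G)$. The result is an augmented simplicial commutative monoid $N_\bullet \to \underline{G}(Y)$ with $N_n = Mor_{Sch_k}(Y, Z_n)$. The extra degeneracies exhibiting left contractibility in Proposition~\ref{simp} persist after applying $Mor_{Sch_k}(Y,-)$, so $N_\bullet$ is left contractible, hence aspherical, with $\pi_0(N_\bullet) = \underline{G}(Y)$. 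Because $\underline{G}(Y)$ is a group, $N_\bullet$ is group-like, and Proposition~\ref{Quillen} implies that $N_\bullet \to N_\bullet^+$ is a weak equivalence. Thus $N_\bullet^+$ is an aspherical augmented simplicial abelian group with $\pi_0 = \underline{G}(Y)$; Dold--Kan translates asphericity into exactness of the alternating-sum chain complex. By Proposition~\ref{SV} (this uses $char(k)=0$), $N_n^+ = Mor_{Sch_k}(Y,(S^\bullet)^{\circ(n+1)}(G))^+ = cor_k(Y,X_n) = \Z_{tr}[X_n](Y)$. Sectionwise exactness on every smooth $Y$ gives exactness in $PSh(cor_k)$, which passes to $Sh_{Nis}(cor_k)$.

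\emph{Part B.} It suffices to show $\Ext^i_{Sh_{Nis}(cor_k)}(\Z_{tr}[X_n], \Z) = 0$ for all $n \geq 0$ and $i \geq 1$. An inductive combinatorial argument based on the identities $S^k(Z/H) \simeq Z^k/(H \wr S_k)$ and $S^\bullet(\coprod_\alpha Z_\alpha) \simeq \coprod_{\vec k} \prod_\alpha S^{k_\alpha}(Z_\alpha)$ exhibits every connected component of $X_n$ as a quotient $G^N / \Gamma$ with $G^N$ a smooth power of $G$ and $\Gamma$ a finite iterated wreath-product of symmetric groups. Corollary~\ref{quotient} then provides contractibility of the Berkovich analytification of each component (the base case $X_0 = G$ follows from Remark~\ref{contracte}), so $|X_n^{an}|$ is a disjoint union of contractible spaces and $H^i(|X_n^{an}|, \Z) = 0$ for $i \geq 1$. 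Theorem~\ref{Comparison} converts this into $H^i_{cdh}(X_n, \Z) = 0$ for $i \geq 1$. Combining with the identification $\Ext^i_{Sh_{Nis}(cor_k)}(\Z_{tr}[X], \Z) \simeq H^i_{cdh}(X, \Z)$ for any $X \in Sch_k$ completes the acyclicity.

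\emph{Main obstacle.} The delicate ingredient is the identification $\Ext^i_{Sh_{Nis}(cor_k)}(\Z_{tr}[X],\Z) \simeq H^i_{cdh}(X,\Z)$ for singular $X$. For smooth $X$ it is classical: $\Z$ has the same Nisnevich, Zariski and cdh cohomology. For the singular $X_n$ appearing here I would reduce to the smooth case through iterated abstract blow-up squares: resolution of singularities provides a hyperresolution of $\Z_{tr}[X_n]$ by $\Z_{tr}$ of smooth schemes in $\DMeN$, and cdh-descent of $\Z$ reassembles the resulting smooth Ext-groups into $H^*_{cdh}(X_n,\Z)$. A secondary point is executing the wreath-product description uniformly across the infinitely many connected components of each $X_n$, so that Corollary~\ref{quotient} may be invoked piece by piece.
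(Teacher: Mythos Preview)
Your proposal is correct and follows essentially the same route as the paper: Part~A matches the paper's argument verbatim (contractibility $\Rightarrow$ group-like $\Rightarrow$ Quillen group-completion $\Rightarrow$ Dold--Kan), and Part~B likewise invokes Corollary~\ref{quotient} and Theorem~\ref{Comparison} on the components of $(S^\bullet)^{\circ n}(G)$. The only substantive difference is your ``main obstacle'': the paper does not build hyperresolutions to identify $\Ext^i_{Sh_{Nis}(cor_k)}(\Z_{tr}[X],\Z)$ with $H^i_{cdh}(X,\Z)$ for singular $X$, but simply cites \cite[Theorem~14.20]{MVW}, which (under resolution of singularities) already provides this comparison for arbitrary $X\in Sch_k$; so your proposed workaround is unnecessary.
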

\begin{proof}
For any $U\in Sm$, we have the augmented commutative monoid
\[
\xymatrix{
\ldots \ar@<1ex>[r]\ar@<0ex>[r]\ar@<-1ex>[r] &Mor_{Sch}(U, S^\bullet(S^\bullet(G)))\ar@<-.5ex>[r] \ar@<.5ex>[r] & Mor_{Sch}(U, S^\bullet(G))\ar[r] & Mor_{Sch}(U, G)
}
\] 
which is contractible as simplicial set. We denote it by $S^\bullet(G)_*(U)\to G(U)$. Since is has extra-degeneracies, the geometric realization $|S^\bullet(G)_*(U)|$ is homotopy equivalent to $G(U)$ by \cite[Lemma 1.12]{ERW}. Consequently, $\pi_i(|S^\bullet(G)_*(U)|)=0$ for $i\geq 1$ and $\pi_0(|S^\bullet(G)_*(U)|)\simeq G(U)$. Thus, $S^\bullet(G)_*(U)\to G(U)$ is aspherical, meaning $\pi_i(S^\bullet(G)_*(U))=0$ for $i\geq 1$ and $\pi_0(S^\bullet(G)_*(U))\simeq G(U)$. In particular, the simplicial commutative monoid $S^\bullet(G)_*(U)$ is group-like. By Proposition~\ref{Quillen}, the natural map $S^\bullet(G)_*(U)\to S^\bullet(G)_*(U)^+$  is a weak equivalence. Therefore, the augmented simplicial abelian group  $S^\bullet(G)_*(U)^+\to G(U)$ is aspherical. Indeed, for $i\geq 1$, we have $\pi_i(S^\bullet(G)_*(U)^+)\simeq\pi_i(S^\bullet(G)_*(U))=0$, and for $i=0$, we conclude that $\pi_0(S^\bullet(G)_*(U)^+)\simeq\pi_0(S^\bullet(G)_*(U))\simeq G(U)$. By Proposition~\ref{SV}, we have $S^\bullet(G)_{n+1}(U)^+\simeq cor(U, (S^\bullet)^{\circ n}(G))$, where $(S^\bullet)^{\circ n}(G)$ denotes the symmetric power functor applied $n$ times to $G$. Thus, for any commutative algebraic $k$-group $G$ and any $U\in Sm$, we get an augmented aspherical simplicial abelian group 
\[
\xymatrix{
\ldots \ar@<1ex>[r]\ar@<0ex>[r]\ar@<-1ex>[r] &cor(U, S^\bullet(G))\ar@<-.5ex>[r] \ar@<.5ex>[r] &cor(U, G)\ar[r] & Mor_{Sch}(U, G).
}
\]
By Dold-Kan correspondence, the associated chain complex
\[
\xymatrix{
\ldots\ar[r]&cor(U, S^\bullet(G))\ar[r] \ar[r]&cor(U, G)\ar[r] & Mor_{Sch}(U, G)
}
\]
is exact, yielding an exact sequence of presheaves with transfers
\[
    \xymatrix{
    \ldots\ar[r] &\Z_{tr}[S^\bullet(S^\bullet(G))]\ar[r] &\Z_{tr}[S^\bullet(G)]\ar[r]&\Z_{tr}[G]\ar[r]&\underline{G}
    }
\]
which is also exact in $Sh_{Nis}(cor_k)$. 

Since each term $(S^\bullet)^{\circ n}(G)$ is a disjoint union of quotients of smooth $k$-varieties by finite groups, Corollary~\ref{quotient} and Theorem~\ref{Comparison} imply that $H^n_{cdh}((S^\bullet)^{\circ n}(G),\Z)\simeq 0$. Therefore, by \cite[Theorem 14.20]{MVW} we conclude that $\Ext^i_{Sh_{Nis}(cor_k)}((S^\bullet)^{\circ n}(G)],\Z)\simeq 0$ for all $n$. Consequently, each presheaf $\Z_{tr}[(S^\bullet)^{\circ n}(G)]$ is $\Hom_{Sh_{Nis}(cor_k)}(-, \Z)$-acyclic. 
\end{proof}

\begin{Rem}
The first part of the statement of Theorem~\ref{acyclic} remains valid if we replace $Psh(cor_k)$ by $\mathcal{PS}h_{tr}$. Indeed, the proof follows similar lines, requiring only that we take $U$ to be any quasi-projective $k$-variety. Also, this resolution is $\Hom_{\mathcal{PS}h_{tr}}(-,\Z)$-acyclic since all $\Z_{tr}[(S^\bullet)^{\circ n}(G)]$ are projective in $\mathcal{PS}h_{tr}$.
\end{Rem}

\subsection{Reduced symmetric power of a pointed scheme}
In this subsection, we define and study several properties of reduced symmetric powers of pointed quasi-projective varieties over $k$. While these properties parallel those of standard symmetric powers discussed in the previous subsection, there is a crucial distinction: reduced symmetric powers exist only as presheaves and do not belong to the category of quasi-projective varieties.

Let $\mathbf{Sets}$ denote the category of sets and $\mathbf{Sets}_*$ the category of pointed sets. We start by establishing some basic definitions and results.

\begin{Def}
    Let $\mathcal{C}$ and $\mathcal{D}$ be categories. A functor $F\colon \mathcal{C}\to \mathcal{D}$ is called final if for every object $d\in\mathcal{D}$, the comma category $(d/F)$ is non-empty and connected. Dually, $F$ is called initial if the opposite functor $F^{op}\colon \mathcal{F}^{op}\to \mathcal{D}^{op}$ is final, that is, if for any object $d\in\mathcal{D}$, the comma category $(F/d)$ is connected.
\end{Def}
\begin{Prop}
    Let $F\colon \mathcal{C}\to\mathcal{D}$ be a functor. Then the following are equivalent:
    \begin{enumerate}
        \item F is final.
        \item For all functors $G\colon\mathcal{D}\to\mathbf{Sets}$, the natural map between colimits 
        \[
        \colim G\circ F \to \colim G
        \]
        is a bijection.
        \item For all categories $\mathcal{E}$ and all functors $G\colon \mathcal{D}\to\mathcal{E}$, the natural map 
        \[
        \colim G\circ F\to \colim G
        \]
        is an isomorphism.
        \end{enumerate}
    Analogously, $F$ is initial if and only if pulling back diagrams along it does not change the limits of these diagrams.
\end{Prop}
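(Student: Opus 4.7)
My plan is to prove the cyclic chain of implications $(3)\Rightarrow(2)\Rightarrow(1)\Rightarrow(3)$, and then deduce the ``analogously'' clause about initial functors by applying the equivalence to $F^{\mathrm{op}}\colon\mathcal{C}^{\mathrm{op}}\to\mathcal{D}^{\mathrm{op}}$ and using that a limit in $\mathcal{E}$ is a colimit in $\mathcal{E}^{\mathrm{op}}$. The implication $(3)\Rightarrow(2)$ is immediate upon taking $\mathcal{E}=\mathbf{Sets}$.

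For $(2)\Rightarrow(1)$, I would fix $d\in\mathcal{D}$ and apply the hypothesis to the representable functor $G=\Hom_{\mathcal{D}}(d,-)\colon\mathcal{D}\to\mathbf{Sets}$. A colimit of a $\mathbf{Sets}$-valued functor can be computed as $\pi_{0}$ of its category of elements. The category of elements of $G$ has the initial object $(d,\mathrm{id}_{d})$, so $\colim_{\mathcal{D}} G$ is a singleton. On the other hand, the category of elements of $G\circ F=\Hom_{\mathcal{D}}(d,F(-))$ is by definition the comma category $(d/F)$, so $\colim_{\mathcal{C}} G\circ F\cong \pi_{0}(d/F)$. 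Hypothesis $(2)$ therefore forces $\pi_{0}(d/F)$ to be a one-point set, which is exactly the statement that $(d/F)$ is non-empty and connected, i.e.\ that $F$ is final.

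The main content is $(1)\Rightarrow(3)$. Given $G\colon\mathcal{D}\to\mathcal{E}$, I will build an inverse to the canonical map $\colim G\circ F\to\colim G$ by constructing a cocone on $G$ with vertex $\colim G\circ F$. For each $d\in\mathcal{D}$, finality provides some $(c,\alpha\colon d\to F(c))\in(d/F)$, and I would set the $d$-leg to be the composite $G(d)\xrightarrow{G(\alpha)} G(F(c))\to \colim G\circ F$. Independence of the choice of representative is obtained by reducing any two choices along a zig-zag in $(d/F)$ (which exists by connectedness) and using the cocone relations satisfied by the universal cocone on $G\circ F$. Compatibility with a morphism $d\to d'$ of $\mathcal{D}$ follows by an analogous zig-zag argument, since any representative $(c',\alpha')\in(d'/F)$ pulls back to an element of $(d/F)$ that may be connected to any previously chosen representative. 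Naturality in $G$ of the resulting inverse map, together with a direct check that it is inverse to the canonical map, finishes the proof. The main obstacle is the bookkeeping needed to verify that the cocone on $G$ is well-defined and functorial in $d$; this step is where the full strength of finality (both non-emptyness and connectedness of the comma categories) enters, and it is the only nontrivial point in the argument.
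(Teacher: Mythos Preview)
Your argument is correct and is the standard proof of this classical fact. The paper itself does not give a proof at all: it simply cites \cite[Proposition~2.5.2]{KashShap}, so your direct cyclic argument $(3)\Rightarrow(2)\Rightarrow(1)\Rightarrow(3)$, with the key step $(2)\Rightarrow(1)$ handled by plugging in the corepresentable $G=\Hom_{\mathcal{D}}(d,-)$ and reading off $\pi_0(d/F)$ from the category of elements, is essentially the proof one finds in that reference.
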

\begin{proof}
    See \cite[Proposition 2.5.2]{KashShap}.
\end{proof}
\begin{Lemma}
    Every right adjoint functor is final.
\end{Lemma}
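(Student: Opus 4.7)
The plan is to prove the stronger statement that, for every $d \in \mathcal{D}$, the comma category $(d/F)$ admits an initial object; connectedness and non-emptiness then follow immediately. Fix a left adjoint $L \colon \mathcal{D} \to \mathcal{C}$ of $F$ and let $\eta \colon \mathrm{id}_{\mathcal{D}} \Rightarrow F \circ L$ denote the unit of the adjunction.

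First, I would propose the pair $(L(d), \eta_d \colon d \to FL(d))$ as the candidate initial object of $(d/F)$. This is a legitimate object of the comma category by construction, which takes care of non-emptiness for free.

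Next, to check initiality, I would pick an arbitrary object $(c, f \colon d \to F(c))$ of $(d/F)$ and invoke the adjunction bijection $\Hom_{\mathcal{C}}(L(d), c) \simeq \Hom_{\mathcal{D}}(d, F(c))$ to produce the unique morphism $f^{\flat} \colon L(d) \to c$ whose image under this bijection is $f$. Unwinding the definition of the bijection in terms of the unit, this means precisely that $F(f^{\flat}) \circ \eta_d = f$, which is exactly the compatibility required for $f^{\flat}$ to qualify as a morphism in $(d/F)$ from $(L(d), \eta_d)$ to $(c, f)$. Uniqueness of $f^{\flat}$ on the $\mathcal{C}$-side translates into uniqueness of this morphism in the comma category, so $(L(d), \eta_d)$ is initial; in particular $(d/F)$ is connected.

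There is essentially no obstacle here, since the whole argument is a single application of the universal property of the unit of an adjunction. The one small point requiring attention is a convention check: the paper takes $(d/F)$ to have objects of the form $d \to F(c)$, so the adjunct $f^{\flat}$ naturally lands in $\Hom_{\mathcal{C}}(L(d), c)$ and the triangle identity $F(f^{\flat}) \circ \eta_d = f$ is exactly on the nose the morphism condition in $(d/F)$; had the comma category been set up on the other side, one would need to use the counit of the opposite adjunction instead.
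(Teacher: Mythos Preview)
Your proof is correct and follows essentially the same approach as the paper: both arguments exhibit the unit $(L(d),\eta_d)$ as the distinguished object of $(d/F)$ through which everything connects. The only difference is cosmetic --- the paper writes out the zig-zag between two arbitrary objects explicitly, while you upgrade this to the cleaner statement that $(L(d),\eta_d)$ is initial, from which non-emptiness and connectedness are immediate.
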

\begin{proof}
Let $(L\dashv R)\colon\mathcal{C}\to\mathcal{D}$ be adjoint functors. For any $d\in\mathcal{D}$, the adjunction unit $(L(d), d\to RL(d))$ lies in $(d/R)$. Moreover, any two objects
\[
    \xymatrix{
    R(a)& d\ar[r]^-{g}\ar[l]_-{f}& R(b)
    }
\]
are connected by a zig-zag going through the unit:
\[
    \xymatrix{
    & d\ar[d]\ar[ld]_f\ar[rd]^g \\
    R(a) & RL(d)\ar[r]_{R(g)}\ar[l]^{R(f)}& R(b).
    }
\]
\end{proof}
\begin{Ex}\label{colimits}
    Let $\mathcal{N}$ be the category whose objects are natural numbers and where the morphism $x\to y$ exists precisely when $x\geq y$.  Then the diagonal functor $\Delta\colon \mathcal{N}\to\mathcal{N}\times\ldots\times\mathcal{N}$ sending $i$ to $(i,\ldots, i)$ has the right adjoint given by $(n_1,\ldots, n_k)\mapsto \max(n_1,\ldots, n_k)$. Therefore, $\Delta$ is initial. Consequently, the opposite functor $\Delta^{op}\colon\mathcal{N}^{op}\to \mathcal{N}^{op}\times\ldots\times\mathcal{N}^{op}$ is final. 
\end{Ex} 

Let $\mathbf{Aff}$ denote the category of affine schemes over $k$, and let $\mathbf{Var_*}$ denote the category of pointed quasi-projective varieties over $k$. Let $Psh$ and $Psh_*$ be the categories of presheaves on $\mathbf{Aff}$ with values in $\mathbf{Sets}$ and $\mathbf{Sets}_*$, respectively.
\begin{Def}
    A strict ind-scheme is an object $X\in Psh$ that admits a presentation $X\simeq\colim_{i\in I} X_i$ as a filtered colimit of schemes, where all transition maps $X_i\to X_j$ are closed immersions. A pointed strict ind-scheme is defined analogously. We denote the corresponding categories by $\mathbf{IndSch}$ and $\mathbf{IndSch_*}$, respectively.
\end{Def}
\begin{Rem}
    We follow the definition of ind-schemes given in \cite{Richarz}. This definition agrees with the one given in \cite[7.11.1]{Beil_Drin}, except that we do not require the schemes in the presentation to be quasi-compact in order to make the category of schemes a full subcategory of ind-schemes. Our definition differs from \cite[0.3.4]{Zhu} in that we do not consider any Grothendieck topology.
\end{Rem}
To construct a pointed version of the resolution from  Theorem~\ref{acyclic}, we have to define the iterated reduced symmetric power of $(X, x_0)$.
\begin{Def}
    Let $\mathcal{G}$ be any object in $Psh_*$. We define the reduced symmetric power $S^\bullet_{red}\mathcal{G}$ as the left Kan extension of $S_{red}^\bullet\colon \mathbf{Var_*}\to Psh_*$ along the Yoneda embedding $\mathbf{Var_*}\to Psh_*$. Namely, we set 
    \[
    S^\bullet_{red}\mathcal{G}=\colim_{X\to \mathcal{G}}S^\bullet_{red}(X),
    \]
    where $X$ ranges over objects in $\mathbf{Var_*}$.
\end{Def}

\begin{Rem}
For a pointed quasi-projective $k$-variety $(X, x_0)\in\mathbf{Var_*}$,
we can compute the iterated reduced symmetric power as:
    \begin{align*}
        (S^\bullet_{red})^{\circ k}(X) &\simeq \colim_{n_1}\ldots\colim_{n_{k-1}} S^\bullet_{red}(S^{n_{k-1}}\ldots S^{n_1}(X) )\\
        &\simeq \colim_{(n_1,\ldots,n_k)} S^{n_k}\ldots S^{n_1}(X) \\
        &\simeq \colim_t S^t \ldots S^t(X),
    \end{align*}
where the last isomorphism follows from Example~\ref{colimits}.
We note that since all transition maps are closed immersions, it follows that $(S^\bullet_{red})^{\circ k}(X)$ is a pointed strict ind-scheme over $k$.
\end{Rem}
\begin{Lemma}\label{fpqc}
    Let $X$ be a strict ind-scheme over $k$. Then for each scheme $Y$ and each fpqc covering $(Y_j\to Y)$, the sequence of sets
    \[
    \xymatrix{
    \Hom_{\mathbf{IndSch}}(Y,X)\ar[r] & \prod_j\Hom_{\mathbf{IndSch}}(Y_j, X)\ar@<0.5ex>[r]\ar@<-0.5ex>[r] & \prod_{j,j'}\Hom_{\mathbf{IndSch}}(Y_j\times_Y Y_{j'}, X) 
    }
    \]
is exact. In particular, every ind-scheme is a sheaf with respect to fpqc topology on $\mathbf{Aff}$.

\end{Lemma}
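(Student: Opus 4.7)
Write $X=\colim_{i\in I}X_i$ as a filtered colimit in $Psh$ of schemes with closed-immersion transition maps. The argument rests on three ingredients. First, colimits in $Psh$ are computed pointwise and affine schemes are representable, so for every affine $T$ the Yoneda lemma gives
\[
\Hom_{\mathbf{IndSch}}(T,X)=X(T)=\colim_{i}\Hom_{\mathbf{Sch}}(T,X_i).
\]
Second, each scheme $X_i$ is an fpqc sheaf by Grothendieck's classical descent theorem. Third, filtered colimits commute with finite limits in $\mathbf{Sets}$.

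The plan is to first treat the case when $Y$ is affine. Since $Y$ is quasi-compact, we may refine the given fpqc covering to a finite family $\{Y_{j_1},\ldots,Y_{j_n}\}$ with each $Y_{j_a}$ affine; the fibre products $Y_{j_a}\times_Y Y_{j_b}$ are then affine as well. Applying the Yoneda identity to $Y$, to each $Y_{j_a}$, and to each $Y_{j_a}\times_Y Y_{j_b}$, and then pulling the filtered colimit past the finitely many products and the equalizer, the sequence in the lemma identifies with the filtered colimit in $i$ of the sequences
\[
\Hom(Y,X_i)\to\prod_{a}\Hom(Y_{j_a},X_i)\rightrightarrows\prod_{a,b}\Hom(Y_{j_a}\times_Y Y_{j_b},X_i),
\]
each of which is exact by fpqc descent for the scheme $X_i$. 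Since filtered colimits preserve exactness in $\mathbf{Sets}$, the equalizer property transfers to $X$; a short diagram chase upgrades descent along the chosen finite refinement to descent along the original cover.

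For a general scheme $Y$, we choose an affine Zariski cover $\{U_\alpha\}$. Each family $\{Y_j\times_Y U_\alpha\to U_\alpha\}_j$ is an fpqc cover of an affine scheme, so the previous step yields descent over each $U_\alpha$. A standard Cech bicomplex argument, combined with Zariski descent for $X$ (itself an instance of the affine case), assembles these local descents into fpqc descent over $Y$. The only delicate point in the whole argument is the interchange of the filtered colimit in $i$ with the limits appearing in descent; it works precisely because fpqc coverings are quasi-compact in the appropriate sense, forcing only finite products to appear after the reduction to the affine case.
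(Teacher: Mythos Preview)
Your argument is correct and is the standard proof of this fact; the paper itself does not supply a proof but simply cites \cite[Lemma~1.4]{Richarz}, where essentially the same reduction appears. Your identification of the one delicate point---that quasi-compactness of fpqc covers lets you refine to a finite affine family, so only finite limits appear and the filtered colimit over $i$ may be pulled past the descent diagram---is exactly the heart of the matter, and you correctly do not invoke the closed-immersion hypothesis, which plays no role here.
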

\begin{proof}
    See \cite[Lemma 1.4]{Richarz}
\end{proof}

We now develop the pointed versions of Proposition~\ref{simp} and Theorem~\ref{acyclic}.

\begin{Con}\label{simp0_pointed}
    Let $\mathcal{C}$ be the category of pointed presheaves on affine schemes $Psh_*$, and let $\mathcal{D}$ be the category of pointed presheaves on affine schemes with values in commutative monoids, where the basepoint serves as the unit element. Then the reduced symmetric power functor $S^\bullet_{red}$ maps $\mathcal{C}$ to $\mathcal{D}$ and is left adjoint to the forgetful functor $U\colon \mathcal{D}\to \mathcal{C}$. Thus, the composition $\perp = S^\bullet_{red}\circ U$ forms a cotriple.
\end{Con}
\begin{Prop}\label{simp_pointed}
    Let $G$ be a commutative algebraic group over $k$. The augmented simplicial presheaf of pointed monoids on affine schemes over $k$ 
    \[
    \xymatrix{
    \ldots \ar@<1ex>[r]\ar@<0ex>[r]\ar@<-1ex>[r] &S^\bullet_{red}(S^\bullet_{red}(G))\ar@<-.5ex>[r] \ar@<.5ex>[r] & S^\bullet_{red}(G)\ar[r] & \underline{G}
    }
    \]
    induced by the cotriple $\perp = S^\bullet_{red}\circ U$ from Construction~\ref{simp0_pointed} is contractible when considered as simplicial object in $Psh_*$.
\end{Prop}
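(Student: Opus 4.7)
The plan is to invoke exactly the same general cotriple-contractibility principle used in the proof of Proposition~\ref{simp}, namely \cite[Proposition~8.6.10]{Weibel}. The cotriple $\perp = S^\bullet_{red}\circ U$ of Construction~\ref{simp0_pointed} is manufactured from the adjunction $S^\bullet_{red}\dashv U$ between $\mathcal{C}=Psh_*$ and the category $\mathcal{D}$ of pointed commutative monoid presheaves on $\mathbf{Aff}$. Whenever a cotriple arises in this way, Weibel's proposition tells us that for any object $D\in\mathcal{D}$ the resulting augmented simplicial object $\perp^{\bullet+1}D\to D$ becomes left contractible once it is viewed in $\mathcal{C}$ via the forgetful functor $U$. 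The extra degeneracies at level $n$ are produced by applying the unit of the adjunction $\eta\colon id_{\mathcal{C}}\to U\circ S^\bullet_{red}$ to the object $U\perp^n(D)$, giving maps $U\perp^n(D)\to U\bigl(S^\bullet_{red}(U\perp^n(D))\bigr)=U\perp^{n+1}(D)$ which satisfy the required simplicial identities.

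The only thing left to verify is that $\underline{G}$ actually lives in $\mathcal{D}$. This is immediate: since $G$ is a commutative algebraic group over $k$, the presheaf $Y\mapsto\underline{G}(Y)=Mor_{Sch_k}(Y,G)$ takes values in abelian groups, and the identity element $e_G$ (the zero section of $G$) furnishes both the basepoint and the unit of the monoid structure. Unwinding Construction~\ref{simp0_pointed} with $D=\underline{G}$ reproduces precisely the augmented simplicial presheaf of pointed monoids displayed in the statement, and its left contractibility as a simplicial object of $Psh_*$ follows at once.

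I do not expect any substantive obstacle. The entire content of the argument is formal once one is inside the correct adjunction, as the extra degeneracies are built into the cotriple formalism. The only bookkeeping worth being careful about is confirming that $S^\bullet_{red}$ (defined via left Kan extension from $\mathbf{Var_*}$) is genuinely left adjoint to $U$ on the presheaf-level categories $\mathcal{C}$ and $\mathcal{D}$ appearing in Construction~\ref{simp0_pointed}, and that the augmented simplicial object of the statement really is the standard cotriple simplicial object associated to this adjunction and to the object $\underline{G}\in\mathcal{D}$; both verifications are routine and do not require characteristic-zero hypotheses or any input from Berkovich geometry.
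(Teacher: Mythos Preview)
Your proposal is correct and matches the paper's own proof, which simply says ``the proof follows the same argument as that of Proposition~\ref{simp}'' --- i.e., apply \cite[Proposition~8.6.10]{Weibel} to the cotriple $\perp=S^\bullet_{red}\circ U$ and the object $\underline{G}\in\mathcal{D}$. Your additional remarks about verifying that $\underline{G}$ lies in $\mathcal{D}$ and that $S^\bullet_{red}\dashv U$ is a genuine adjunction are the routine checks implicit in that citation.
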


\begin{proof}
    The proof follows the same argument as that of Proposition~\ref{simp}. 
\end{proof}

\begin{Rem}
    Let $U\in Sm$ be a smooth separated scheme over $k$ and $(X, x_0)$ be any object in $\mathbf{Var_*}$. Define $S^\bullet_{red}(X)(U)$ as $\Hom_{Psh_*}(U,S^\bullet_{red}(X))$. When $U$ is affine, we have the isomorphism:
    \begin{equation}\label{transfers_str}
        S^\bullet_{red}(X)(U)\simeq \colim_n Mor_{Sch}(U,S^n(X)).
    \end{equation}
    
    Moreover, by Lemma~\ref{fpqc}, this isomorphism extends to arbitrary $U\in Sm$. Consequently, we obtain a sequence of isomorphisms:
    \[
    S^\bullet_{red}(X)(U)^+\simeq S^\bullet(X)(U)^+/ S^\bullet(x_0)(U)^+\simeq \Z_{tr}[X/x_0](U),
    \]
    where the first isomorphism follows  from~\ref{transfers_str} for arbitrary $U\in Sm$, and the second from Proposition~\ref{SV} and the definition of $\Z_{tr}[X/x_0]$. Thus, $S^\bullet_{red}(X)^+$ has a natural transfer structure and is a direct summand of $\Z_{tr}[X]$.

    For the iterated reduced symmetric power of a pointed quasi-projective $k$-variety $(X,x_0)\in\mathbf{Var_*}$, we have:
    \begin{align*}
        (S^\bullet_{red})^{\circ k}(X)(U)^+&\simeq \colim_{(n_1,\ldots,n_k)} S^\bullet_{red}(S^{n_{k-1}}\ldots S^{n_1}(X))(U)^+ \\
        &\simeq \colim_{(n_1,\ldots,n_k)}\Z_{tr}[S^{n_{k-1}}\ldots S^{n_1}(X)/x_0](U) \\
        &\simeq \colim_t \Z_{tr}[(S^t)^{\circ (k-1)}(X)/x_0](U),
    \end{align*}
    where the last isomorphism follows from Example~\ref{colimits}.
\end{Rem}

We now apply this machinery to obtain a pointed version of the resolution of $\underline{G}$ in both $Psh(cor_k)$ and $\mathcal{PS}h_{tr}$, which will serve as our primary computational tool in what follows.

\begin{Th}\label{acyclic_pointed}
Let $G$ be a commutative algebraic group over $k$. Then the augmented simplicial object in $Psh_*$ from Proposition~\ref{simp_pointed} induces a resolution of $\underline{G}$ in $PSh(cor_k)$ of the form:
    \[
    \xymatrix{
    \ldots\ar[r] & (S^\bullet_{red})^{\circ 3}(G)^+\ar[r] &(S^\bullet_{red})^{\circ 2}(G)^+\ar[r]&S^\bullet_{red}(G)^+\ar[r]&\underline{G}.
    }
    \]
This statement remains valid when replacing $Psh(cor_k)$ with $\mathcal{PS}h_{tr}$.
\end{Th}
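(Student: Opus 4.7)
The plan is to mirror the proof of Theorem~\ref{acyclic}, with the pointed/ind-scheme setup replacing the unpointed one. First, I would evaluate the augmented simplicial presheaf of Proposition~\ref{simp_pointed} at an arbitrary $U \in Sm$ (respectively at an arbitrary $U \in Cor_k$ for the $\mathcal{PS}h_{tr}$ statement), obtaining an augmented simplicial commutative monoid
\[
\ldots \to (S^\bullet_{red})^{\circ 2}(G)(U) \to S^\bullet_{red}(G)(U) \to \underline{G}(U).
\]
The contraction from Proposition~\ref{simp_pointed} exists in $Psh_*$, so the extra degeneracies pass to this evaluated simplicial set, and \cite[Lemma 1.12]{ERW} (used exactly as in Theorem~\ref{acyclic}) implies that the geometric realization is homotopy equivalent to $\underline{G}(U)$. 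Hence the augmented simplicial commutative monoid is aspherical, with $\pi_0 \simeq G(U)$.

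Since $\pi_0 \simeq G(U)$ is already a group, the simplicial monoid is group-like, so Quillen's result Proposition~\ref{Quillen} gives a weak equivalence from it to its level-wise group completion. Thus $S^\bullet_{red}(G)_*(U)^+ \to G(U)$ is an aspherical augmented simplicial abelian group, and Dold--Kan yields exactness of the associated chain complex of abelian groups. I would then invoke the Remark preceding the theorem, together with Example~\ref{colimits}, to identify
\[
(S^\bullet_{red})^{\circ k}(G)^+(U) \simeq \colim_t \Z_{tr}\bigl[(S^t)^{\circ (k-1)}(G)/e\bigr](U),
\]
so that the pointwise-exact complex assembles into the desired sequence of presheaves (and, by filtered colimit exactness, remains exact as a complex of presheaves with transfers).

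To handle the transfer structure, I would check that the boundary and degeneracy maps in the simplicial object of Construction~\ref{simp0_pointed} are morphisms of presheaves of pointed commutative monoids, so that group completion produces morphisms of presheaves of abelian groups which under the identification of the Remark agree with the presheaf-with-transfers maps induced by the underlying morphisms of (ind-)schemes via Proposition~\ref{SV}. The $\mathcal{PS}h_{tr}$ version is identical: the key input~\eqref{transfers_str} in the Remark holds for any affine test scheme $U$, hence by Lemma~\ref{fpqc} for arbitrary $U\in Cor_k$, so the same Dold--Kan argument applies verbatim on $Cor_k$.

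The main obstacle is bookkeeping rather than conceptual: one must verify that the natural transfer structures on the terms $(S^\bullet_{red})^{\circ k}(G)^+$, inherited from the Suslin--Voevodsky identification of symmetric powers with effective correspondences, are compatible with the simplicial face and degeneracy maps coming from the cotriple $\perp = S^\bullet_{red}\circ U$. This compatibility is formal because all structural maps in the construction are morphisms of monoids in $Psh_*$ and group completion is functorial; the only genuine subtlety is that the terms are filtered colimits of representables modulo the basepoint, so one must use that filtered colimits commute with group completion and with the formation of $\Z_{tr}[-/e]$, which follows from the commutation of filtered colimits with finite limits in $\mathbf{Ab}$.
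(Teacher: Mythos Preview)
Your proposal is correct and follows essentially the same route as the paper's proof, which simply says ``following the argument in the proof of Theorem~\ref{acyclic}'' and then records the resulting aspherical augmented simplicial abelian group and applies Dold--Kan. Your write-up is more explicit (spelling out the use of \cite[Lemma~1.12]{ERW}, Proposition~\ref{Quillen}, and the identification from the Remark), and your final paragraphs on transfer-structure compatibility are extra bookkeeping the paper leaves implicit, but the argument is the same.
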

\begin{proof}
    Following the argument in the proof of Theorem~\ref{acyclic}, we obtain an augmented aspherical simplicial abelian group
    \[
        \xymatrix{
        \ldots \ar@<1ex>[r]\ar@<0ex>[r]\ar@<-1ex>[r] &Mor_{Psh}(U, (S^\bullet_{red})^{\circ 2}(G))^+\ar@<-.5ex>[r] \ar@<.5ex>[r] &Mor_{Psh}(U, S^\bullet_{red}(G))^+\ar[r] & Mor_{Sch}(U, G)
        }
    \]
    for any $U\in Sm$. By Dold-Kan correspondence, the associated chain complex 
    \[
    \xymatrix{
    \ldots \ar[r]&Mor_{Psh}(U, (S^\bullet_{red})^{\circ 2}(G))^+\ar[r] &Mor_{Psh}(U, S^\bullet_{red}(G))^+\ar[r] & Mor_{Sch}(U, G)
    }
    \]
    is exact.
    
    The result for $\mathcal{PS}h_{tr}$ follows similarly: we take $U$ to be an arbitrary quasi-projective $k$-variety and apply the same sequence of computations.
\end{proof}

\subsection{Main results} 
\begin{Prop}\label{resolutionZ}
Let $G$ be a commutative algebraic group over $k$. For any integer $n\geq 1$, we have
\[
    \RHom_{Sh_{Nis}(cor_k)}((S^\bullet_{red})^{\circ n}(G)^+,\Z)\simeq 0.
\]
\end{Prop}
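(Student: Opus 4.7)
The plan is to reduce the statement to the acyclicity already obtained in the proof of Theorem~\ref{acyclic}, by treating $(S^\bullet_{red})^{\circ n}(G)^+$ as a filtered colimit of $\Z_{tr}$ of pointed quotients of smooth varieties by finite groups.

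First, I would unwind the colimit description from the Remark preceding Theorem~\ref{acyclic_pointed}: setting $Y_t:=(S^t)^{\circ(n-1)}(G)$, one obtains an isomorphism of Nisnevich sheaves with transfers
\[
(S^\bullet_{red})^{\circ n}(G)^+\simeq\colim_t\Z_{tr}[Y_t/e].
\]
The formal input I would then invoke is the isomorphism
\[
\RHom_{Sh_{Nis}(cor_k)}(\colim_t F_t,\Z)\simeq R\lim_t\RHom_{Sh_{Nis}(cor_k)}(F_t,\Z),
\]
valid because $Sh_{Nis}(cor_k)$ is a Grothendieck abelian category with exact filtered colimits. This reduces the problem to proving $\RHom(\Z_{tr}[Y_t/e],\Z)\simeq 0$ for every $t$, since an $R\lim$ of a diagram of zero complexes is zero.

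To prove the individual vanishing, I would recycle the geometric argument from the proof of Theorem~\ref{acyclic}. The scheme $Y_t$ is a quotient of the smooth variety $G^{t^{n-1}}$ by an iterated wreath product of symmetric groups (a finite group). By Corollary~\ref{quotient} each connected component of $|Y_t^{an}|$ is contractible, so Theorem~\ref{Comparison} gives $H^i_{cdh}(Y_t,\Z)=0$ for $i\geq 1$. Combined with the identification $\Ext^i_{Sh_{Nis}(cor_k)}(\Z_{tr}[Y_t],\Z)\simeq H^i_{cdh}(Y_t,\Z)$ recorded in the proof of Theorem~\ref{acyclic} (following \cite[Theorem~14.20]{MVW}), the complex $\RHom(\Z_{tr}[Y_t],\Z)$ is concentrated in degree~$0$. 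The splitting $\Z_{tr}[Y_t]\simeq\Z\oplus\Z_{tr}[Y_t/e]$ induced by the basepoint $e$ then identifies $\RHom(\Z_{tr}[Y_t/e],\Z)$ with the complement of the $\Z$-summand $\RHom(\Z,\Z)\simeq\Z$ inside $\RHom(\Z_{tr}[Y_t],\Z)$, which for connected $Y_t$ exhausts the whole thing, yielding vanishing.

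Two technical issues will dominate. First, one must justify the $\colim$-to-$R\lim$ step inside $Sh_{Nis}(cor_k)$ and verify that the reduced-symmetric-power presentation really defines the claimed colimit in the category of Nisnevich sheaves with transfers (and not merely in presheaves on affines); this is a formal but non-trivial use of Grothendieck-abelian-category machinery. Second, for disconnected $G$ the scheme $Y_t$ is no longer connected, so $H^0_{cdh}(Y_t,\Z)$ acquires extra $\Z$-summands coming from non-identity components, and one must either introduce a finer componentwise splitting pairing each component of $Y_t$ with the appropriate coset of the identity component of $G$, or show that these extra contributions die in the filtered colimit; this bookkeeping is, I expect, the main obstacle.
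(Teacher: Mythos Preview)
Your proposal is essentially identical to the paper's proof: both write $(S^\bullet_{red})^{\circ n}(G)^+\simeq\colim_t\Z_{tr}[(S^t)^{\circ(n-1)}(G)/e]$, pass to an $R\lim$ of $\RHom$'s, use Corollary~\ref{quotient} and Theorem~\ref{Comparison} (via \cite[Theorem~14.20]{MVW}) to kill the higher $\Ext$'s of $\Z_{tr}[Y_t]$, and then use the basepoint splitting to conclude. The disconnectedness issue you flag is not addressed in the paper either---the argument there tacitly uses that $Y_t$ is connected when asserting $\Hom(\Z_{tr}[Y_t/e],\Z)\simeq 0$---so you should simply assume $G$ connected (as is implicit throughout) rather than treat this as an obstacle.
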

\begin{proof}
For $n=1$, we know that $S_{red}^\bullet(G)^+\simeq \Z_{tr}[G/e]$ is a direct summand of $\Z_{tr}[G]$. By \cite[Proposition 13.10 and Proposition 14.16]{MVW}, we have
\[
    \Ext^i_{Sh_{Nis}(cor_k)}(\Z_{tr}[G],\Z)\simeq H^i_{Nis}(G, \Z)\simeq H^i_{Zar}(G,\Z)=0
\]
for all $i\geq 1$. This yields the following sequence of isomorphisms:
\begin{align*}
    \RHom_{Sh_{Nis}(cor_k)}(S_{red}^\bullet(G)^+,\Z)&\simeq\Hom_{Sh_{Nis}(cor_k)}(S_{red}^\bullet(G)^+,\Z)\\
    &\simeq \Z_{tr}[\Spec(k)](G)/ \Z_{tr}[\Spec(k)](e)\\
    &\simeq 0.
\end{align*}
For arbitrary $n\geq 1$, we obtain:
\[
    (S^\bullet_{red})^{\circ n}(G)^+\simeq\colim_{(k_1,\ldots, k_{n-1})}S^\bullet_{red}(S^{k_{n-1}}\ldots S^{k_1}G)\simeq\colim_{(k_1,\ldots, k_{n-1})}\Z_{tr}[S^{k_{n-1}}\ldots S^{k_1}G/e].
\]
Since the functor $i\mapsto (i,\ldots, i)$ from $\mathcal{N}$ to $\mathcal{N}\times\ldots \times \mathcal{N}$ is final, we can rewrite the last colimit as $\colim_{t}\Z_{tr}[(S^t)^{\circ(n-1)}(G)/e]$.
Therefore:
\begin{align*}
    \RHom_{Sh_{Nis}(cor_k)}((S^\bullet_{red})^{\circ n})(G)^+,\Z) &\simeq \RHom_{Sh_{Nis}(cor_k)}(\colim_{t}\Z_{tr}[(S^t)^{\circ(n-1)}(G)/e],\Z) \\
    &\simeq R\lim_{t}\RHom_{Sh_{Nis}(cor_k)}(\Z_{tr}[(S^t)^{\circ(n-1)}(G)/e],\Z). 
\end{align*}
For all $i\geq 1$, we have the following sequence of isomorphisms
\[
\Ext^i_{Sh_{Nis}(cor_k)}(\Z_{tr}[(S^t)^{\circ(n-1)}(G)], \Z)\simeq H_{cdh}^i((S^t)^{\circ(n-1)}(G),\Z)=0,
\]
where the first isomorphism follows from  \cite[Theorem 14.20]{MVW} and the second follows from  Corollary~\ref{quotient} and Theorem~\ref{Comparison}. Since $\Z_{tr}[(S^t)^{\circ(n-1)}(G)/e]$ is a direct summand of $\Z_{tr}[(S^t)^{\circ(n-1)}(G)]$, we conclude that
\begin{align*}
    \RHom_{Sh_{Nis}(cor_k)}(\Z_{tr}[(S^t)^{\circ(n-1)}(G)/e],\Z)&\simeq \Hom_{Sh_{Nis}(cor_k)}(\Z_{tr}[(S^t)^{\circ(n-1)}(G)/e],\Z)\\&\simeq 0.
\end{align*}
Thus, $\RHom_{Sh_{Nis}(cor_k)}((S^\bullet_{red})^{\circ n})(G)^+,\Z)$ is trivial. 
\end{proof}
\begin{Th}\label{ext}
    Let $G$ be a commutative algebraic group over $k$. Then
    \[\RHom_{Sh_{Nis}(cor_k)}(\underline{G}, \Z)\simeq 0.
    \]
\end{Th}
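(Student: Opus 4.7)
The plan is to deduce this from the resolution of $\underline{G}$ constructed in Theorem~\ref{acyclic_pointed} together with the vanishing established in Proposition~\ref{resolutionZ}. Concretely, Theorem~\ref{acyclic_pointed} gives an exact sequence
\[
    \ldots \to (S^\bullet_{red})^{\circ 3}(G)^+ \to (S^\bullet_{red})^{\circ 2}(G)^+ \to S^\bullet_{red}(G)^+ \to \underline{G} \to 0
\]
in $Sh_{Nis}(cor_k)$, and Proposition~\ref{resolutionZ} asserts that each $(S^\bullet_{red})^{\circ n}(G)^+$ is $\RHom_{Sh_{Nis}(cor_k)}(-,\Z)$-acyclic. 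I would apply the derived functor $\RHom_{Sh_{Nis}(cor_k)}(-,\Z)$ to this resolution and read off the result.

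More precisely, I would first replace $\underline{G}$ in the derived category by the complex
\[
    P_\bullet = \big(\ldots \to (S^\bullet_{red})^{\circ 3}(G)^+ \to (S^\bullet_{red})^{\circ 2}(G)^+ \to S^\bullet_{red}(G)^+\big)
\]
so that $P_\bullet \to \underline{G}$ is a quasi-isomorphism. Then
\[
    \RHom_{Sh_{Nis}(cor_k)}(\underline{G},\Z) \simeq \RHom_{Sh_{Nis}(cor_k)}(P_\bullet,\Z).
\]
By the acyclicity from Proposition~\ref{resolutionZ}, for each $n \geq 1$ the complex $\RHom_{Sh_{Nis}(cor_k)}((S^\bullet_{red})^{\circ n}(G)^+,\Z)$ is quasi-isomorphic to zero. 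I would then invoke the hypercohomology spectral sequence (or, equivalently, the double-complex argument) to conclude that $\RHom_{Sh_{Nis}(cor_k)}(P_\bullet,\Z) \simeq 0$, which gives the claim.

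The only delicate point is the convergence of the spectral sequence, because $P_\bullet$ is unbounded on the left. Since we are working in an unbounded setting, I would either argue directly that after applying $\RHom_{Sh_{Nis}(cor_k)}(-,\Z)$ the resulting bicomplex is acyclic in each degree (so that each $E_1$-page term vanishes, hence the abutment is zero), or alternatively truncate the resolution to bounded stages and pass to the limit; the termwise vanishing from Proposition~\ref{resolutionZ} makes both approaches work without technical difficulty. This gives $\RHom_{Sh_{Nis}(cor_k)}(\underline{G},\Z) \simeq 0$, as desired.
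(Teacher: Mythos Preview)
Your proposal is correct and follows essentially the same route as the paper: use the resolution of Theorem~\ref{acyclic_pointed} together with the acyclicity from Proposition~\ref{resolutionZ}, then apply $\Hom_{Sh_{Nis}(cor_k)}(-,\Z)$ to obtain the zero complex. Your extra care about convergence is not really needed here---since $P_\bullet$ is bounded above and $\Z$ is a single object, the relevant Hom bicomplex has finite anti-diagonals, and in any case Proposition~\ref{resolutionZ} gives $\Hom(P_n,\Z)=0$ (not just vanishing of higher Ext), so the resulting complex is literally zero---but the argument is the same.
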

\begin{proof} By Proposition~\ref{resolutionZ}, we can compute the required $\RHom$ in $Sh_{Nis}(cor_k)$ using the resolution from Theorem~\ref{acyclic_pointed}:
 \[
    \xymatrix{
    \ldots\ar[r] & (S^\bullet_{red})^{\circ 3}(G)^+\ar[r] &  (S^\bullet_{red})^{\circ 2}(G)^+ \ar[r]&S^\bullet_{red}(G)^+\ar[r]&\underline{G}.
    }
\]
After applying $\Hom_{Sh_{Nis}(cor_k)}(-, \Z)$ we get the zero complex and the claim follows.
\end{proof}
\begin{Cor}
    Let $G$ be a semi-abelian variety over $k$. Then
    \[
    \RHom_{\DMeN(k,\Z)}(M_1(G), \Z)\simeq 0.
    \]
\end{Cor}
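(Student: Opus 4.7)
The plan is to reduce the corollary to Theorem~\ref{ext} by translating the $\RHom$ computation from $Sh_{Nis}(cor_k)$ to $\DMeN(k,\Z)$. The key input is Voevodsky's theorem that the localization functor $\mathbf{D}^{-}(Sh_{Nis}(cor_k)) \to \DMeN(k,\Z)$ becomes fully faithful when restricted to complexes whose cohomology sheaves are $\mathbb{A}^1$-homotopy invariant; equivalently, for any two $\mathbb{A}^1$-homotopy invariant Nisnevich sheaves with transfers $F$ and $H$, the natural map $\Ext^i_{Sh_{Nis}(cor_k)}(F,H) \to \Hom_{\DMeN(k,\Z)}(F, H[i])$ is an isomorphism for every $i$ (see \cite[Proposition 14.16]{MVW}).

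Both sheaves involved here lie in this full subcategory: the constant sheaf $\Z$ is trivially $\mathbb{A}^1$-homotopy invariant, and the hypothesis that $G$ is semi-abelian ensures, via Orgogozo's result \cite[Lemma 3.3.1]{Org} recalled in the introduction, that $\underline{G}$ is $\mathbb{A}^1$-homotopy invariant as well. By definition, $M_1(G) \in \DMeN(k,\Z)$ is the image of $\underline{G}$ placed in degree zero, so applying the equivalence above yields
$$\RHom_{\DMeN(k,\Z)}(M_1(G),\Z) \;\simeq\; \RHom_{Sh_{Nis}(cor_k)}(\underline{G}, \Z),$$
and the right-hand side vanishes by Theorem~\ref{ext}.

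There is no genuine obstacle in this step: once Theorem~\ref{ext} has been established, the passage to the motivic category is formal and uses only standard properties of Voevodsky's triangulated category of motives. The entire substance of the corollary therefore lies in the previously proven vanishing of $\RHom_{Sh_{Nis}(cor_k)}(\underline{G}, \Z)$, which in turn rested on the resolution of Theorem~\ref{acyclic_pointed} together with the comparison isomorphism of Theorem~\ref{Comparison}.
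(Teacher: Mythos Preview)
Your proposal is correct and matches the paper's approach: the paper simply says the corollary follows immediately from Theorem~\ref{ext}, and you have spelled out the standard reason why, namely that both $\underline{G}$ and $\Z$ are $\mathbb{A}^1$-homotopy invariant Nisnevich sheaves with transfers, so their $\RHom$ in $Sh_{Nis}(cor_k)$ agrees with that of their images in $\DMeN(k,\Z)$. The only quibble is that the precise reference \cite[Proposition 14.16]{MVW} may not be the exact statement you need; the full faithfulness on $\mathbb{A}^1$-local objects is closer to the embedding theorem in that chapter, but the mathematics is sound.
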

\begin{proof}
This follows immediately from Corollary~\ref{ext}.
\end{proof}

\begin{Prop}\label{resolutionAB}
    Let $A$ and $B$ be abelian varieties over $k$. Then for any $n\geq 1$, the complex $\RHom_{\mathcal{PS}h_{tr}}((S^\bullet_{red})^{\circ n}(A)^+,\underline{B})$ is concentrated in degree $0$ and is isomorphic to both $Mor_{Psh_*}((S^\bullet_{red})^{\circ (n-1)}(A),\underline{B})$ and $Mor_{Sch_*}(A,B)$.
\end{Prop}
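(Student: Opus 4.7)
The strategy parallels that of Proposition~\ref{resolutionZ} with $\Z$ replaced by $\underline{B}$; the main new ingredient is rigidity of morphisms between abelian varieties. First I would use the identification
\[
(S^\bullet_{red})^{\circ n}(A)^+\simeq \colim_t \Z_{tr}[(S^t)^{\circ(n-1)}(A)/e]
\]
recorded in the Remark following Proposition~\ref{simp_pointed}. Each term $\Z_{tr}[(S^t)^{\circ(n-1)}(A)/e]$ is a direct summand of the representable presheaf $\Z_{tr}[(S^t)^{\circ(n-1)}(A)]$, which is projective in $\mathcal{PS}h_{tr}$; hence
\[
\RHom_{\mathcal{PS}h_{tr}}(\Z_{tr}[(S^t)^{\circ(n-1)}(A)/e],\underline{B})\simeq Mor_{Sch_*}((S^t)^{\circ(n-1)}(A),B)
\]
is concentrated in degree $0$.

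The heart of the argument is then to show that each term of the resulting inverse system is canonically $\Hom_{\mathbf{Ab_k}}(A,B)$ and that the transition maps are the identity. The scheme $(S^t)^{\circ(n-1)}(A)$ is the quotient of $A^{t^{n-1}}$ by the iterated wreath product $S_t\wr\cdots\wr S_t$ (with $n-1$ factors), so a pointed morphism $(S^t)^{\circ(n-1)}(A)\to B$ corresponds to a symmetric pointed morphism $A^{t^{n-1}}\to B$. By the rigidity theorem, any pointed morphism between abelian varieties is a homomorphism; applied inductively to a power of $A$ this forces such a symmetric pointed morphism to have the form $(a_i)\mapsto\sum_i\phi(a_i)$ for a single $\phi\in\Hom_{\mathbf{Ab_k}}(A,B)$, i.e.\ to factor as $(S^t)^{\circ(n-1)}(A)\xrightarrow{\Sigma}A\xrightarrow{\phi}B$ via the total-sum map $\Sigma$. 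Conversely every $\phi$ arises in this way. The basepoint-inclusion transition maps $(S^t)^{\circ(n-1)}(A)\hookrightarrow (S^{t+1})^{\circ(n-1)}(A)$ act as the identity on $\Hom_{\mathbf{Ab_k}}(A,B)$, since each added basepoint coordinate contributes $\phi(e_A)=0$ to the sum.

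Passing to the derived limit then yields
\[
\RHom_{\mathcal{PS}h_{tr}}((S^\bullet_{red})^{\circ n}(A)^+,\underline{B})\simeq R\lim_t\Hom_{\mathbf{Ab_k}}(A,B)\simeq \Hom_{\mathbf{Ab_k}}(A,B)\simeq Mor_{Sch_*}(A,B),
\]
with $R^i\lim$ vanishing for $i\geq 1$ because the tower is essentially constant. Performing the analogous computation at the level of pointed-set valued presheaves rather than presheaves with transfers (exchanging $\colim$ with $Mor_{Psh_*}(-,\underline{B})$ for an ordinary $\lim$) computes
\[
Mor_{Psh_*}((S^\bullet_{red})^{\circ(n-1)}(A),\underline{B})=\lim_t Mor_{Sch_*}((S^t)^{\circ(n-1)}(A),B)\simeq \Hom_{\mathbf{Ab_k}}(A,B),
\]
giving the middle isomorphism of the statement.

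\emph{Main obstacle.} The principal technical content is the rigidity-based identification $Mor_{Sch_*}((S^t)^{\circ(n-1)}(A),B)\simeq\Hom_{\mathbf{Ab_k}}(A,B)$, uniformly in $t$ and compatibly with the transition maps. This requires presenting $(S^t)^{\circ(n-1)}(A)$ as a wreath-product quotient of $A^{t^{n-1}}$ and carefully tracking basepoints when applying rigidity to invariant pointed morphisms. Once this is in hand, the vanishing of $R^i\lim$ and the identification with $Mor_{Psh_*}((S^\bullet_{red})^{\circ(n-1)}(A),\underline{B})$ are formal.
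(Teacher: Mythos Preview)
Your proposal is correct and follows essentially the paper's approach: projectivity of the summands $\Z_{tr}[(S^t)^{\circ(n-1)}(A)/e]$ reduces the computation to $R\lim_t Mor_{Sch_*}((S^t)^{\circ(n-1)}(A),B)$, and rigidity identifies each term with $\Hom_{\mathbf{Ab_k}}(A,B)$ compatibly with the transition maps. The only difference is cosmetic---the paper splits into cases $n=1$, $n=2$, $n\geq 3$ and shows the transition maps are isomorphisms via a difference argument (any $d\colon A\times A\to B$ with $d(e_A,A)=d(A,e_A)=e_B$ vanishes by rigidity), whereas you identify each term directly by presenting $(S^t)^{\circ(n-1)}(A)$ as the wreath-product quotient of $A^{t^{n-1}}$; the content is the same.
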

\begin{proof}
For $n=1$, since $S^\bullet_{red}(A)^+\simeq \Z_{tr}[A/e]$ is projective as a direct summand of $\Z_{tr}[A]$, we have: 
\begin{align*}
        \RHom_{\mathcal{PS}h_{tr}}((S^\bullet_{red})(A)^+,\underline{B}) &\simeq \Hom_{\mathcal{PS}h_{tr}}((S^\bullet_{red})(A)^+,\underline{B}) \\
        &\simeq B(A)/B(e) \\
        &\simeq Mor_{Sch_*}(A, B).
\end{align*}
For $n=2$,  $(S^\bullet_{red})^{\circ 2}(A)^+\simeq\colim_n \Z_{tr}[S^nA/e]$. This yields the following sequence of isomorphisms:
\begin{align*}
        \RHom_{\mathcal{PS}h_{tr}}((S^\bullet_{red})^{\circ 2}(A)^+,\underline{B}) &\simeq \RHom_{\mathcal{PS}h_{tr}}(\colim_k \Z_{tr}[S^kA/e],\underline{B}) \\
        &\simeq R\lim_k\RHom_{\mathcal{PS}h_{tr}}(\Z_{tr}[S^kA/e],\underline{B}) \\
        &\simeq R\lim_k\Hom_{\mathcal{PS}h_{tr}}(\Z_{tr}[S^kA/e],\underline{B})\\
        &\simeq R\lim_k Mor_{Sch_*}(S^kA, B).
\end{align*}
All isomorphisms above are straightforward except the third, which follows from the projectivity of $\Z_{tr}[S^kA/e]$. For all $k \geq 1$, the transition maps in the last inductive system induce isomorphisms $Mor_{Sch_*}(S^{k+1}A,B)\to Mor_{Sch_*}(S^kA,B)$. To verify this, take $k=2$, then surjectivity is immediate, and for injectivity, suppose two maps have identical images in $Mor_{Sch_*}(A,B)$. Their difference then induces a map $d\colon A\times A\to B$ with $d(e_A,A)=d(A,e_A)=e_B$. By the rigidity theorem, $d$ must be the identity map. This argument extends easily to show that the transition maps $Mor_{Sch_*}(S^{k+1}A,B)\to Mor_{Sch_*}(S^kA,B)$ are isomorphisms for all $k\geq 1$. Since the final map $Mor_{Sch_*}(A, B)\to Mor_{Sch_*}(e, B)$ is surjective, this inverse system satisfies the Mittag-Leffler condition. Therefore, 
\[
    R\lim_k Mor_{Sch_*}(S^kA, B) \simeq \lim_k Mor_{Sch_*}(S^kA, B) \simeq Mor_{Sch_*}(A,B)
\]
and the result follows for $n=2$.

For $n\geq 3$, applying the same argument as in the proof of Proposition~\ref{resolutionZ} and  using the projectivity of $\Z_{tr}[(S^t)^{\circ(n-1)}(A)/e]$, we obtain:  \begin{align*}
    \RHom_{\mathcal{PS}h_{tr}}((S^\bullet_{red})^{\circ n}(A)^+, \underline{B})&\simeq R\lim_t\RHom_{\mathcal{PS}h_{tr}}(\Z_{tr}[(S^t)^{\circ(n-1)}(A)/e],\underline{B}) \\
    &\simeq R\lim_t\Hom_{\mathcal{PS}h_{tr}}(\Z_{tr}[(S^t)^{\circ(n-1)}(A)/e],\underline{B}) \\
    &\simeq R\lim_t Mor_{Sch_*}((S^t)^{\circ (n-1)}(A),B)
\end{align*}
Iterating the argument from the $n=2$ case shows that the transition maps
\[
Mor_{Sch_*}((S^{(t+1)})^{\circ (n-1)}(A),B)\to Mor_{Sch_*}((S^t)^{\circ (n-1)}(A),B)
\]
are isomorphisms for all $t\geq 1$. Thus, this inverse system satisfies the Mittag-Leffler condition, yielding:
\[
    R\lim_t Mor_{Sch_*}((S^t)^{\circ (n-1)}(A), B) \simeq \lim_t Mor_{Sch_*}((S^t)^{\circ (n-1)}(A), B) \simeq Mor_{Sch_*}(A,B).
\]
Furthermore, 
\begin{align*}
    \lim_t Mor_{Sch_*}((S^t)^{\circ (n-1)}(A), B) &\simeq Mor_{Psh_*}(\colim_t(S^t)^{\circ (n-1)}(A), \underline{B}) \\
    &\simeq Mor_{Psh_*}(\colim_{(k_1,\ldots,k_{n-1})}S^{k_{n-1}}\ldots S^{k_1}A,\underline{B})\\
    &\simeq Mor_{Psh_*}((S^\bullet_{red})^{\circ(n-1)}(A),\underline{B}),
\end{align*}
where the second isomorphism holds because the functor $i\mapsto (i,\ldots, i)$ from $\mathcal{N}^{op}$ to $\mathcal{N}^{op}\times\ldots\times \mathcal{N}^{op}$ is initial.
\end{proof}

\begin{Th}\label{ABV}
Let $A$, $B$ be abelian varieties over $k$. Then $\RHom_{\mathcal{PS}h_{tr}}(\underline{A},\underline{B})$ is concentrated in degree $0$ and is quasi-isomorphic to $\Hom_{\mathbf{Ab_k}}(A,B)$. In particular,
\[
    \Ext^i_{\mathcal{PS}h_{tr}}(\underline{A}, \underline{B})\simeq 0, \text{ for } i\geq 1.
\]
\end{Th}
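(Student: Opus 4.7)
The plan is to compute $\RHom_{\mathcal{PS}h_{tr}}(\underline{A},\underline{B})$ by applying $\Hom_{\mathcal{PS}h_{tr}}(-,\underline{B})$ to the pointed cotriple resolution of Theorem~\ref{acyclic_pointed},
\[
\ldots \to (S^\bullet_{red})^{\circ 3}(A)^+ \to (S^\bullet_{red})^{\circ 2}(A)^+ \to S^\bullet_{red}(A)^+ \to \underline{A}.
\]
By Proposition~\ref{resolutionAB}, each term $(S^\bullet_{red})^{\circ n}(A)^+$ is $\Hom_{\mathcal{PS}h_{tr}}(-,\underline{B})$-acyclic and satisfies
\[
\Hom_{\mathcal{PS}h_{tr}}((S^\bullet_{red})^{\circ n}(A)^+,\underline{B}) \simeq Mor_{Sch_*}(A,B),
\]
which, by the rigidity theorem for abelian varieties, coincides with $\Hom_{\mathbf{Ab_k}}(A,B)$. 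Hence $\RHom_{\mathcal{PS}h_{tr}}(\underline{A},\underline{B})$ is quasi-isomorphic to the cochain complex associated with a cosimplicial abelian group all of whose terms are identified with the constant group $\Hom_{\mathbf{Ab_k}}(A,B)$.

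Next, I would identify the cosimplicial structure explicitly. Writing $\perp = S^\bullet_{red}\circ U$, the face maps in the cotriple resolution at simplicial degree $n$ are $\partial_i = \perp^i\,\epsilon\,\perp^{n-i}$ for $i=0,\ldots,n$, where $\epsilon\colon \perp \to \mathrm{id}$ is the counit, induced on $\underline{A}$ by the group addition on $A$. The isomorphism of Proposition~\ref{resolutionAB} is realized by restricting a map $g\colon (S^\bullet_{red})^{\circ(n+1)}(A)\to B$ along the canonical inclusion $\eta\colon A\hookrightarrow (S^\bullet_{red})^{\circ(n+1)}(A)$ sending $a\mapsto[\,\cdots[a]\cdots\,]$ (a single element placed in each symmetric power). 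The central claim is that every face map $\partial_i$ sends this distinguished copy of $A$ in $\perp^{n+1}(\underline{A})$ identically onto the corresponding copy of $A$ in $\perp^n(\underline{A})$; consequently, the induced coface map on $\Hom_{\mathbf{Ab_k}}(A,B)$ is the identity.

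Granting this claim, the cochain differential $d^n = \sum_{i=0}^{n+1}(-1)^i\partial^i$ equals $0$ for even $n$ and $\mathrm{id}$ for odd $n$, so the complex becomes
\[
\Hom_{\mathbf{Ab_k}}(A,B)\xrightarrow{0}\Hom_{\mathbf{Ab_k}}(A,B)\xrightarrow{\mathrm{id}}\Hom_{\mathbf{Ab_k}}(A,B)\xrightarrow{0}\ldots,
\]
whose cohomology is concentrated in degree $0$ and equals $\Hom_{\mathbf{Ab_k}}(A,B)$. I expect the main technical obstacle to be the verification that every coface map acts as the identity under the identification of Proposition~\ref{resolutionAB}; this requires careful tracking of the isomorphism through the face maps of the cotriple resolution and ultimately rests on the rigidity theorem ensuring that a pointed morphism between abelian varieties is automatically a group homomorphism, together with compatibility of the identifications with the group law inherited from $A$.
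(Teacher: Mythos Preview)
Your proposal is correct and follows essentially the same route as the paper: both apply $\Hom_{\mathcal{PS}h_{tr}}(-,\underline{B})$ to the pointed cotriple resolution of Theorem~\ref{acyclic_pointed}, invoke Proposition~\ref{resolutionAB} for acyclicity and for the identification of every term with $Mor_{Sch_*}(A,B)\cong\Hom_{\mathbf{Ab_k}}(A,B)$, and then verify that every coface acts as the identity under this identification, so that the associated complex alternates between $0$ and $\mathrm{id}$. The only stylistic difference is that the paper passes through the intermediate identification $\Hom_{\mathcal{PS}h_{tr}}((S^\bullet_{red})^{\circ k}(A)^+,\underline{B})\simeq Mor_{Psh_*}((S^\bullet_{red})^{\circ(k-1)}(A),\underline{B})$ and therefore treats one coface (the one coming from functoriality of $S^\bullet_{red}$ and multiplication in $B$) separately from the others; your formulation, restricting directly along the iterated inclusion $A\hookrightarrow \perp^{n+1}A$, handles all cofaces uniformly via the single identity $\partial_i\circ j_{n+1}=j_n$, which the paper proves by induction from the contraction relations $\partial_0\circ i=\mathrm{id}$ and $\partial_j\circ i=i\circ\partial_{j-1}$.
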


\begin{proof}
By Proposition~\ref{resolutionAB}, we can compute $\RHom_{\mathcal{PS}h_{tr}}(\underline{A}, \underline{B})$ using the (simplicial) resolution of $\underline{A}$ from Theorem~\ref{acyclic_pointed}. After applying $\Hom_{\mathcal{PS}h_{tr}}(-, \underline{B})$ and using the isomorphism $\Hom_{\mathcal{PS}h_{tr}}((S^\bullet_{red})^{\circ k}(A)^+, \underline{B})\simeq Mor_{Psh_*}((S^\bullet_{red})^{\circ (k-1)}(A), \underline{B})$ (Proposition~\ref{resolutionAB}), we get a cosimplicial object $C^*$:
\[
\xymatrix{
Mor_{Psh_*}(\underline{A}, \underline{B})\ar@<0.5ex>[r]\ar@<-0.5ex>[r]  & Mor_{Psh_*}(S^\bullet_{red}(A), \underline{B})\ar@<1ex>[r]\ar@<0ex>[r]\ar@<-1ex>[r] & Mor_{Psh_*}((S^\bullet_{red})^{\circ 2}(A), \underline{B})\ar@<1ex>[r]\ar@<0.33ex>[r]\ar@<-1ex>[r]]\ar@<-0.33ex>[r] &\ldots
}.
\]
By Proposition~\ref{resolutionAB}, all terms of $C^*$ are isomorphic to $Mor_{Sch_*}(A,B)$. We split the computation of cohomology of its associated complex $C^\bullet$ in several steps.

First, we show that the maps
\[\xymatrix{
Mor_{Psh_*}(\underline{A}, \underline{B})\ar@<0.5ex>[r]\ar@<-0.5ex>[r]  & Mor_{Psh_*}(S^\bullet_{red}(A), \underline{B})}
\]
in degree $0$ coincide. Indeed, for any $n\geq 1$, at the finite level, one map sends $f\colon A\to B$ to the composition $S^n(A)\to S^n(B)\to B$, inducing a map $A^n\to B$ that  sends $(a_1, \ldots, a_n)$ to the sum $\sum_{i=1}^n f(a_i)$. 

The second map sends $f\colon A\to B$ to the composition of $S^n(A)\to A\to B$, where the first map is induced by multiplication in $A$. This gives a map $A^n\to B$ sending $(a_1,\ldots, a_n)$ to $f(\sum_{i=1}^{n}a_i)$. Since $f$ preserves the identity element, the usual corollary of the rigidity of the abelian variety $B$ implies that this map equals $\sum_{i=1}^{n}f(a_i)$. Taking the colimit, we conclude that the maps in degree zero are equal.

Furthermore, we show that these maps become the identity after the identification
\[
i^*\colon Mor_{Psh_*}(S_{red}^\bullet(A), \underline{B})\to Mor_{Sch_*}(A, B),
\]
where $i^*$ is induced by $i\colon\underline{A}\to S^\bullet_{red}(A)$. To check this, it suffices to verify that  $f\circ m_A\circ i=f$, which follows immediately from $m_A \circ i=id$. 

We observe that in each degree of $C^*$, all but the one of the maps are induced by the corresponding maps in the simplicial presheaf of Proposition~\ref{simp_pointed}. The additional arrow arises from the functionality of the reduced symmetric power functor and the map $S^\bullet_{red}(B)\to \underline{B}$ induced by multiplication in $B$. We also recall that the identification \[Mor_{Psh_*}((S_{red}^\bullet)^{\circ m}(A), \underline{B})\to Mor_{Psh_*}(\underline{A},\underline{B})\simeq Mor_{Sch_*}(A,B)
\]
is induced by a sequence of natural inclusions:
\[
\underline{A}\to S_{red}^\bullet(A)\to S_{red}^\bullet(S_{red}^\bullet(A))\to\ldots\to (S_{red}^\bullet)^{\circ m}(A).
\]

We now extend our earlier argument to show that all maps in $C^*$ are identity isomorphisms. Let $f\colon S^\bullet_{red}(A)^{\circ(m)}\to \underline{B}$ and consider 
\[
\partial_j^*\colon Mor_{Psh_*}((S^\bullet_{red})^{\circ(m)}(A)), \underline{B})\to Mor_{Psh_*}((S^\bullet_{red})^{\circ(m+1)}(A), \underline{B})
\]
induced by a face map $\partial_j\colon (S^\bullet_{red})^{\circ(m+1)}(A)\to (S^\bullet_{red})^{\circ m}(A)$. Then  $f$ corresponds to the map 
\[
\xymatrix{\underline{A}\ar[r]^-{i_0} & S_{red}^\bullet(A)\ar[r]^-{i_1}&\ldots \ar[r]^-{i_{m-2}} &  (S^\bullet_{red})^{\circ(m-1)}(A)\ar[r]^-{i_{m-1}}&  (S^\bullet_{red})^{\circ m}(A)\ar[r]^-f &\underline{B}
}
\]
in $Mor_{Psh_*}(\underline{A}, \underline{B})$. We must verify that $f\circ i^{\circ m}:\underline{A}\to (S^\bullet_{red})^{\circ m}(A)\to \underline{B}$ coincides with the composition $ f\circ \partial_j\circ i^{\circ(m+1)}\colon \underline{A}\to (S^\bullet_{red})^{\circ(m+1)}(A)\to \underline{B}$. For this, we prove the identity $i^{\circ m}=\partial_j\circ i^{\circ (m+1)}$ by induction on $m$. The base case is clear since $\partial_0\circ i_0=m_A\circ i_0\colon \underline{A}\to S_{red}^\bullet(A)\to \underline{A}$ is the identity. Since  $i$ is exactly the contraction in the simplicial presheaf of Proposition~\ref{simp_pointed}, we have $\partial_0\circ i_{m+1}=id$ and $\partial_j\circ i_{m+1}=i_m\circ \partial_{j-1}$. Using these relations  and the induction hypothesis, we see that $\partial_0 \circ i^{\circ(m+1)}= i^{\circ m }$ and $\partial_j\circ i^{\circ(m+1)}= i_m\circ\partial_{j-1}\circ i^{\circ m}=i_m\circ i^{\circ(m-1)}=i^{\circ m}$. 

The remaining map 
\[Mor_{Psh_*}((S^\bullet_{red})^{\circ m}(A), \underline{B})\to Mor_{Psh_*}((S^\bullet_{red})^{\circ(m+1)}(A), \underline{B})\]
sends $f\colon (S^\bullet_{red})^{\circ m}(A) \to \underline{B}$ to $m_B\circ s(f)\colon (S^\bullet_{red})^{\circ(m+1)}(A)\to S^\bullet_{red}(B)\to \underline{B}$. As in the degree $0$ case, this map is the identity after identification of both terms with $Mor_{Psh_*}(\underline{A}, \underline{B})$, since the diagram 
\[
\xymatrix{&(S^\bullet_{red})^{\circ (m+1)}(A) \ar[r]^-{s(f)}\ar@<+0.5ex>[d]^-m & S^\bullet_{red}(B)\ar[d]^m\\
\underline{A}\ar[r]^-{i^{\circ(m)}}&(S^\bullet_{red})^{\circ m}(A)\ar[r]^-f\ar@<0.5ex>[u]^-i & \underline{B}}
\]
is commutative.

We conclude that all terms of $C^*$ are isomorphic to $Mor_{Sch_*}(A, B)$ and all maps in each degree are identities. Therefore, the associated chain complex $C^\bullet$ takes the form: 
\[
\xymatrix{
Mor_{Sch_k, *}(A, B)\ar[r]^-0 & Mor_{Sch_k, *}(A, B)\ar[r]^-{id}& Mor_{Sch_k, *}(A, B)\ar[r]^-0 & \ldots.}
\]
This complex has only zeroth cohomology, which is isomorphic to $Mor_{Sch_*}(A, B)$. By the rigidity theorem, this is precisely $\Hom_{\mathbf{Ab_k}}(A, B)$, completing the proof.
\end{proof}

\begin{Cor}
Let $A$, $B$ be abelian varieties over $k$. Then $\RHom_{Sh_{Nis}(Cor_k)}(\underline{A}, \underline{B})$ is concentrated in degree $0$ and is quasi-isomorphic to $\Hom_{\mathbf{Ab_k}}(A, B)$.
\end{Cor}
\begin{proof}
We can follow the lines of the proof of Theorem~\ref{ABV} if we show that for all $n\geq 1$, $(S^\bullet_{red})^{\circ n}(A)^+$ is $\Hom_{Sh_{Nis}(Cor_k)}(-,\underline B)$-acyclic. For this, we can use the same argument as in the proof of Proposition~\ref{resolutionAB}, provided we prove that for all $i\geq 1$, $\Ext_{Sh_{Nis}(Cor_k)}^i(\Z_{tr}[(S^t)^{\circ (n-1)}(A)/e],\underline{B})$ vanishes for all $t\geq 1$ and all $n\geq1$. To establish this, it suffices to show that for any smooth $k$-variety $X$ with an action of a finite group $G$ and for all $i\geq 1$, $\Ext_{Sh_{Nis}(Cor_k)}^i(\Z_{tr}[(X/G)/e],\underline{B})$ is zero. Let us denote by $Y$ the quotient $X/G$. Since $\Z_{tr}[Y/e]$ is a direct summand of $\Z_{tr}[Y]$, it is enough to prove the vanishing of $\Ext^i_{Sh_{Nis}(Cor_k)}(\Z_{tr}[Y],\underline{B})$. As this is isomorphic to $H^i_{Nis}(Y,\underline{B})$, we demonstrate the vanishing of these cohomology groups for all $i\geq 1$. 

First, recall that any rational map $X\dashrightarrow B$ from a smooth $k$-variety $X$ extends to a morphism $X\to B$ by \cite[Theorem 1.18]{dGP}. Given an open subset $U\hookrightarrow Y$ with a map $U\to B$, consider the pullback diagram:
\[
\xymatrix{
    U'\ar[r]\ar[d] & X\ar[d]\\
    U\ar[r] & Y.
}
\]
By the previous observation, we can extend a map $U'\to B$ to $X\to B$. This map is $G$-invariant since it has this property on the dense subset $U'$. Therefore, it descends to $Y$, implying that any map $U\to B$ can be extended to $Y$.

To prove that the natural map $\underline{B}(Y)\to R\Gamma_{Nis}\underline{B}(Y)$ is an isomorphism, we can verify the Mayer-Vietoris property for the small Nisnevich site $Y_{Nis}$ (for a discussion of the Mayer-Vietoris property, see \cite[Section 3]{CHSW}, or for a simpler proof in the case of the Nisnevich topology, see \cite[Proposition 4.2.1]{Beil_Vologod}. While this proof deals with the big Nisnevich site, the argument also applies to the small site).

Take any \'etale map $s\colon U'\to Y$ and consider the elementary Nisnevich square
\[
\xymatrix{
W\ar[r]^j\ar[d]^w & V\ar[d]^p \\
U\ar[r]^i & U',
}
\]
where $i$ is an open embedding and $p$ is \'etale. To verify the Mayer-Vietoris property, it suffices to show that the map $\underline{B}(V)\oplus\underline{B}(U)\to \underline{B}(W)$ is surjective. To do this, we consider the pullback diagram
\[
\xymatrix{
    & V\times_Y X\ar[r]\ar[d]^{pr_1} & X\ar[d]^{pr} \\
    W\ar[r]^j & V\ar[r]^{s \circ p} & Y.
}
\]
Since the map $pr$ is the universal categorical quotient and $s\circ p$ is \'etale, $pr_1$ induces an isomorphism $(V\times_Y X)/G\simeq V$, identifying $V$ with a quotient of a smooth $k$-variety by a finite group $G$. Since $j\colon W\to V$ is open, it follows from our earlier argument that $\underline{B}(V)$ surjects onto $\underline{B}(W)$. Hence, $\underline{B}$ satisfies the Mayer-Vietoris property on $Y_{Nis}$, implying that $H^i_{Nis}(Y,\underline{B})$ vanishes for all $i\geq 1$, which completes the proof. 
\end{proof}

\end{document}